\newtheorem{theorem}{Theorem}[section]
\newtheorem{proposition}[theorem]{Proposition}
\newtheorem{lemma}[theorem]{Lemma}
\newtheorem{corollary}[theorem]{Corollary}
\newtheorem{claim}[theorem]{Claim}
\newenvironment{manualtheorem}[1]{%
	\manualtheoreminner
}{\endmanualtheoreminner}
\theoremstyle{definition}
\newtheorem{definition}[theorem]{Definition}
\newtheorem{example}[theorem]{Example}
\theoremstyle{remark}
\newtheorem{remark}[theorem]{Remark}
\numberwithin{equation}{section}
\newcommand{\bbC}{\mathbb{C}}
\newcommand{\bbF}{\mathbb{F}}
\newcommand{\bbP}{\mathbb{P}}
\newcommand{\bbQ}{\mathbb{Q}}
\newcommand{\bbZ}{\mathbb{Z}}
\newcommand{\calO}{\mathcal{O}}
\newcommand{\Esx}{\tilde{E}_6}
\newcommand{\Esv}{\tilde{E}_7}
\newcommand{\Ee}{\tilde{E}_8}
\newcommand{\Ab}{\tilde{A}}
\newcommand{\Bb}{\tilde{B}}
\newcommand{\Cb}{\tilde{C}}
\newcommand{\Xb}{\tilde{X}}
\newcommand{\Qb}{\tilde{Q}}
\DeclareMathOperator{\Pic}{Pic}
\DeclareMathOperator{\Def}{Def}
\newcommand{\Mod}[1]{\ (\mathrm{mod}\ #1)}
\begin{document}

\title{On the classification of singular cubic threefolds}
\author{Sasha~Viktorova}
\bibliographystyle{amsalpha}
\maketitle

\vspace*{-2em}
\begin{abstract}
	We classify combinations of isolated singularities that can occur on complex cubic threefolds generalizing analogous results for cubic surfaces (\cite{Schlafli}, \cite{BW}). In addition, we provide concise combinatorial description of the possible configurations of simple singularities: they essentially correspond to subgraphs of a certain graph.
\end{abstract}

\section*{Introduction}

The aim of this paper is to give a complete classification of combinations of isolated singularities appearing on compex cubic threefolds (that is, hypersurfaces of degree three in complex projective four-space). In our work we follow a strategy similar to the one employed by Bruce and Wall \cite{BW} to classify singularities on cubic surfaces. Originally, the classification for cubic surfaces was obtained by Schl\"{a}fli \cite{Schlafli}. Similar classifications for certain classes of singularities on quartic and quintic surfaces were acquired by Urabe \cite{urabe} and Yang \cite{Yang} via lattice theory. More recently, Stegmann \cite{Stegmann} used lattice-theoretic methods to give a partial classification for singularities on cubic fourfolds.

\medskip

The study of singular cubic threefolds goes back to the Italian school, and to Corrado Segre in particular. In \cite{Segre}, he shows that the maximal number of nodes on a cubic threefold is 10, and there is a unique such cubic (up to projective equivalence). The Segre cubic is studied in detail in a paper by Dolgachev \cite{DolgSegre}. Further steps towards classification of singular cubic threefolds include Yokoyama's and Allcock's lists of several maximal combinations of $A_n$ singularities in the context of GIT analysis (\cite{yokoyama}, \cite{allcock1}), and the description of cubic threefolds admitting a  $\bbC$ or $\bbC^*$-action by du~Plessis and Wall \cite{DPW08} (see Appendix~\ref{Sym_Threefolds}). Singular cubic threefolds also play a key role in the study of degenerations of intermediate Jacobians (\cite{CG}, \cite{GrHulek}, \cite{CMGHL}) and a certain construction of an irreducible holomorphic symplectic manifold (\cite{LSV}).

\medskip

Our primary approach to the classification problem is the projection method of \cite{BW}. Specifically, a singular cubic $X\subset\bbP^4$ with isolated singularities which is not a cone is rational via a projection $\pi$ from a singular point $p$. The base locus of the inverse map $\pi^{-1}$ is a $(2,3)$ complete intersection curve $C\subset\bbP^3$. By a theorem of Wall \cite[Theorem 2.1]{W99} (see Theorem \ref{projthm}), there is a close connection between the singularities of $X$ and the singularities of $C$. In addition, we use a deformation theory result that allows us to complete the classification efficiently. Concretely, the global deformations of a cubic with isolated singularities which is not a cone give independent versal deformations of its singularities (\cite[Proposition 4.1]{DPW00}; see Theorem \ref{F:DPW}). In particular, there is a well-defined notion of maximal configurations of isolated singularities on such cubics (Definition \ref{max_comb}). The rest of the possible configurations of singularities can be recovered from the maximal ones using results of Brieskorn, Grothendieck and Slodowy (\cite{Brieskorn}, \cite{slodowy}; see Theorem \ref{Gr} and Table \ref{unimodal} below). We now state the main theorem of the paper:

\smallskip

\begin{manualtheorem}{I}\label{Theorem1}
	The complete list of the combinations of isolated singularities occurring on complex cubic threefolds consists of 204 cases and is contained in Tables \ref{Ta:ca3list}, \ref{Ta:ca2list} and \ref{Ta:anlist}. Moreover, the following configurations are maximal (in the specified classes):
	\begin{enumerate}
		\item The maximal simple ($ADE$) configurations are $E_8+A_2$, $E_7+A_2+A_1$, $E_6+2A_2$, $D_8+A_3$, $D_6+A_3+2A_1$, $D_5+2A_3$, $3D_4$, $A_{11}$, $A_7+A_4$, $2A_5+A_1$, $5A_2$, and $10A_1$.
		\item The maximal constellations of $A_n$ singularities are $A_{11}$, $A_7+A_4$, $2A_5+A_1$, $3A_3+A_1$, $2A_3+A_2+2A_1$, $2A_3+4A_1$, $5A_2$, and $10A_1$.
	\end{enumerate}
\end{manualtheorem}

The paper \cite{BW} includes an elegant description of the configurations of isolated singularities on cubic surfaces via subdiagrams of the $\Esx$ diagram (\cite[Section 4]{BW}; see Theorem \ref{BW}). Inspired by this result, we find a compact way to encode the $ADE$ combinations on cubic threefolds:

\begin{manualtheorem}{II}
	An $ADE$ combination of singularities occurs on a cubic threefold if and only if the union of the corresponding Dynkin diagrams is $10A_1$, $5A_2$, or an induced subgraph of the graph $\Gamma$ (Figure \ref{gamma}).
\end{manualtheorem}

\vspace*{-2em}

\tikzset{every picture/.style={line width=0.75pt}} 

\begin{figure}[htb]
	\begin{center}
		\includestandalone{Gamma}
		\caption{Graph $\Gamma$}
		\label{gamma}
	\end{center}
\end{figure}

\medskip

\noindent
\textbf{Outline of the paper.}
We start with necessary preliminaries (Section \ref{Preliminaries}): introduction to the projection method and Dynkin diagrams of singularities, a brief overview of the results of \cite{BW} for singular cubic surfaces, some deformation theory and Milnor lattice theory. In the following Sections \ref{sect_cr3}, \ref{sect_cr2} and \ref{sect_an} we study the possible combinations of isolated singularities on cubic threefolds. The main result of each of these sections is the list of configurations with a fixed corank of the worst singularity of a threefold (see Definition~\ref{corank}). Theorem \ref{Theorem1} is obtained in the end of Section \ref{sect_an} as a compilation of these results. We prove Theorem \ref{Theorem2} in Section \ref{sect_comb}. The appendices contain basic singularity and lattice theory, a summary of the partial classification of du Plessis and Wall \cite{DPW08}, and tables with the complete list of combinations of isolated singularities appearing on cubic threefolds.

\medskip

\noindent
\textbf{Conventions.} We work over the field of complex numbers. In Sections \ref{sect_cr3}, \ref{sect_cr2} and \ref{sect_an}, we assume all the singularities to be isolated. We will be concerned with the following types of singularities:
\begin{itemize}
	\item $A_n$, $D_n$ ($n\ge4$), $E_6$, $E_7$, and $E_8$ (simple or $ADE$ singularities),
	\smallskip
	\item $P_8=\Esx=T_{333}$, $X_9=\Esv=T_{244}$, and $J_{10}=\Ee=T_{236}$ (parabolic or simply-elliptic),
	\smallskip
	\item $T_{pqr}$ with $\frac{1}{p}+\frac{1}{q}+\frac{1}{r}<1$ (hyperbolic),
	\smallskip
	\item $U_{12}$, $S_{11}$, $Q_{10}$ (exceptional unimodal),
	\smallskip
	\item $O_{16}$ (cone over a smooth cubic surface).
\end{itemize}
The standard local equations of these singularities are given for instance in Chapter 15.1 of \cite{Arnold2012}. Notice that we use the same notation for stably equivalent singularities (Definition \ref{stable}).

\medskip

\noindent
\textbf{Acknowledgements.} First of all, I would like to thank my Ph.D. advisor, Radu Laza, for suggesting to me this topic and for his guidance. I would also like to thank Andrei Ionov, Lisa Marquand, Olivier Martin, Aleksei Pakharev and Grisha Papayanov for helpful discussions and comments on the paper draft. This paper has been completed at KU Leuven where I am supported by Methusalem grant METH/21/03 -- long term structural funding of the Flemish Government.

\section{Preliminaries} \label{Preliminaries}

\subsection{Notation} \label{notation}

Let $X\subset\bbP^n$ be a complex cubic hypersurface ($n\geq 3$). We fix a singular point $p\in X$ and choose coordinates in which $p=[1:0:\ldots:0]$. In these coordinates $X$ is defined by the equation
$$
f(x_0,\ldots,x_n)=x_0f_2(x_1,\ldots,x_n)+f_3(x_1,\ldots,x_n)
$$
where $f_2(x_1,\ldots,x_n)$ and $f_3(x_1,\ldots,x_n)$ are homogeneous polynomials of degree 2 and 3 respectively.

\medskip

Let $N$ be the hyperplane at infinity defined by $x_0=0$, $Q\subset N$ be the quadric hypersurface defined by $f_2=0$, $S\subset N$ be the cubic hypersurface defined by $f_3=0$, and $C$ be the intersection of $Q$ and $S$.

\begin{remark} \label{SmodQ}
	While $Q$ and $C$ are uniquely determined by the singular point $p$, the cubic hypersurface $S$ is only defined modulo $Q$. If we choose a hyperplane $N'$ with the equation $x_0'=x_0-\sum_{i=1}^na_ix_i$ then $f_3'(x_1,\ldots,x_n)=f_3+(\sum_{i=1}^na_ix_i)f_2$. Thus $X$ can be defined by the equation $f'(x_0',x_1,\ldots,x_n)=x_0'f_2+f_3'$.
\end{remark}

\subsection{The projection method} \label{projection}

For the rest of this subsection, we will use the notation above and assume that $X$ is irreducible, not a cone, and has an isolated singularity at $p$. Notice that if $X$ is irreducible, then $f_3\not\equiv 0$, and if $X$ is not a cone, then $f_2\not\equiv 0$.

\begin{proposition}
	\hfill
	\begin{itemize}
		\item[a)] $Q$ and $S$ do not have common components.
		\item[b)] The lines $L\subset X$ passing through $p$ are in $1:1$ correspondence with the points of $C$.
		\item[c)] Let $q\in X$ be a singular point other than $p$. Then the line $L= <p,q>$ is contained in $X$ and the only singular points of $X$ on $L$ are $p$ and $q$.
	\end{itemize}
\end{proposition}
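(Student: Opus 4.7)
The plan is to handle the three parts essentially independently, using the explicit form $f = x_0 f_2 + f_3$ and the parametrization of lines through $p$ by directions in $N$. For (a), I argue by contradiction: suppose $Q$ and $S$ share an irreducible component. Since both are hypersurfaces of $N \cong \bbP^{n-1}$, such a component has codimension one. Either $Q$ is irreducible, in which case $Q \subseteq S$ forces $f_2 \mid f_3$, say $f_3 = f_2 \cdot \ell$; or $Q$ is reducible with $f_2 = \ell m$ (possibly $\ell = m$) and some linear factor, say $\ell$, divides $f_3$, giving $f_3 = \ell \cdot g$ with $g$ quadratic. The resulting factorizations $f = f_2(x_0 + \ell)$ and $f = \ell(x_0 m + g)$ both contradict the irreducibility of $X$.

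For (b), parametrize an arbitrary line through $p$ as $[s : tv_1 : \cdots : tv_n]$ with direction $v \in N$. Substituting into $f$ and using homogeneity of $f_2, f_3$ gives
\[
f(s, tv) = t^2 \bigl(s f_2(v) + t f_3(v)\bigr),
\]
which vanishes identically in $(s,t)$ if and only if $f_2(v) = f_3(v) = 0$, i.e., $v \in C$. Since distinct directions give distinct lines through $p$, this yields the desired bijection.

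For (c), $L := \langle p, q\rangle \subseteq X$ follows from Bezout: $L$ meets $X$ at each of the singular points $p, q$ with multiplicity $\ge 2$, giving total intersection $\ge 4 > 3 = \deg X$, which forces $L \subseteq X$. By (b), $L$ corresponds to some $v \in C$. To locate the singular points on $L$, I compute partial derivatives along the parametrization: $\partial_{x_0} f|_L = t^2 f_2(v) = 0$, and for $i \ge 1$,
\[
\partial_{x_i} f|_L = t \bigl(s\, \partial_i f_2(v) + t\, \partial_i f_3(v)\bigr).
\]
Hence a point of $L \setminus \{p\}$ (where $t \neq 0$) is singular precisely when $s\, \nabla f_2(v) + t\, \nabla f_3(v) = 0$ in $\bbC^n$. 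The main—though mild—subtlety is the ensuing case analysis: if the two gradients do not both vanish, the above linear system forces them to be parallel and determines $[s:t]$ uniquely, so $q$ is the only additional singular point; the degenerate case $\nabla f_2(v) = \nabla f_3(v) = 0$ is ruled out because then every point of $L$ would be singular on $X$, placing $p$ in a positive-dimensional component of $\mathrm{Sing}(X)$ and contradicting the isolatedness of $p$. Note that this step uses isolatedness at $p$, not at $q$, which is important since the proposition does not assume the latter.
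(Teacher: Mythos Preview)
Your proof is correct. Parts (a) and (b) follow essentially the same ideas as the paper, though you spell out the factorizations in (a) explicitly and in (b) use a direct parametrization where the paper phrases things via the tangent cone and intersection multiplicities; these are the same computation in different clothing.

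For the second half of (c), the paper takes a cleaner route: rather than computing $\nabla f$ along $L$, it observes that $\mathrm{Sing}(X)$ is cut out by the quadric hypersurfaces $\partial_i f = 0$. If three collinear points $p,q,r$ all lie on a quadric, then by B\'ezout the entire line does; hence three singular points on $L$ would force $L \subseteq \mathrm{Sing}(X)$, contradicting isolatedness at $p$. This avoids your case analysis on whether $\nabla f_2(v)$ and $\nabla f_3(v)$ are proportional or vanish. On the other hand, your explicit computation has the advantage of pinpointing exactly where the second singular point sits on $L$ (the unique $[s:t]$ determined by the proportionality), information the paper's argument does not extract.
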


\begin{proof}
	If $Q$ and $S$ have a common component, then $X$ is reducible which contradicts the assumptions of this subsection.
	
	\smallskip
	
	A line passing through $p$ intersects $X$ at $p$ with multiplicity at least $2$. In particular, such a line either meets $X$ at exactly one point other than $p$, intersects $X$ at $p$ with multiplicity $3$ or is contained in $X$. Now, b) follows from the fact that $Q$ is the projectivized tangent cone to $X$ at $p$ and can be interpreted as the locus of lines intersecting $X$ at $p$ with multiplicity at least $3$. Thus if $x\in C(= Q\cap S\subset X)$, the intersection number of the line $L=<p,x>$ and $X$ is at least $4$ (multiplicity $\geq3$ at $p$ and $\geq1$ at $x$), which means $L$ is contained in $X$. The converse holds by a similar argument.
	
	\smallskip
	
	The first part of c) is immediate. For the second part, we note that $Sing(X)$ is cut by quadric hypersurfaces (the partial derivatives of $f$). 
	If there are three points $p,q,r\in Sing(X)$ on a line $L$ then $L$ has to be contained in each of the quadrics. Thus $L\subset Sing(X)$ and the singularity at $p$ is not isolated.
\end{proof}

Let $\epsilon:\Xb\to X$ be the blow-up of $X$ at $p$ with the exceptional divisor $E$. Let $\pi:X\dashrightarrow N\cong\mathbb{P}^{n-1}$ be the projection from $p$ onto $N$.

\begin{corollary} \label{corblowup}
	The projection $\pi:X\dashrightarrow N$ is a birational map, and there is a unique birational morphism $\phi:\Xb\to N$ that fits into the diagram
	$$\xymatrix@R=.25cm@C=.25cm{
		&\ar[dr]^\phi\ar[dl]_\epsilon {\Xb}&\\
		{X}\ar@{-->}[rr]^\pi&&{N}.
	}$$
	Furthermore, the restriction of $\phi$ to the exceptional divisor $E$ gives an isomorphism $\phi\vert_E:E\to Q\subset N$.
\end{corollary}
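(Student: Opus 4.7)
My plan is to deduce the corollary from the standard description of the blow-up of projective space at a point, so that $\phi$ arises by restriction from a morphism already defined on the ambient blow-up. First I would establish that $\pi$ is birational: since $p$ is singular, the quadratic form $f_2$ is nonzero, so a general line through $p$ meets $X$ at $p$ with multiplicity exactly $2$, leaving a unique residual intersection point. Sending $x\in N\setminus C$ to this residual intersection provides an explicit rational inverse to $\pi$, proving birationality.

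Next I would construct $\phi$ by realizing $\Xb$ as the proper transform of $X$ inside the blow-up $\widetilde{\bbP}^n$ of $\bbP^n$ at $p$. The ambient blow-up carries two canonical structure maps: the contraction $\widetilde{\bbP}^n\to\bbP^n$ and the $\bbP^1$-bundle projection $\widetilde{b}\colon\widetilde{\bbP}^n\to N$, obtained by viewing $\widetilde{\bbP}^n$ as the incidence variety $\{(y,[\ell])\in\bbP^n\times N : y\in\ell\}$, where $\ell$ ranges over lines through $p$. Since $X$ is a hypersurface through $p$, its proper transform coincides with the intrinsic blow-up $\Xb$; I take $\phi:=\widetilde{b}|_{\Xb}$. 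This is automatically a morphism agreeing with $\pi$ on $\Xb\setminus E$, and uniqueness follows from separatedness of $N$, since any two morphisms $\Xb\to N$ that coincide on a dense open set must agree globally.

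To see that $\phi|_E\colon E\to Q$ is an isomorphism I would invoke the standard fact that the exceptional divisor of the blow-up of a hypersurface at a point is canonically its projectivized tangent cone. Since the tangent cone of $X$ at $p$ is cut out by $f_2$, this identifies $E$ with $Q\subset N$, and a direct check in one affine chart of $\widetilde{\bbP}^n$ near $p$ confirms that $\widetilde{b}$ restricted to $E$ is precisely this identification. The main obstacle I anticipate is bookkeeping rather than depth: one must verify carefully that the proper transform of $X$ inside $\widetilde{\bbP}^n$ coincides scheme-theoretically with the intrinsic blow-up $\Xb$, and that the exceptional divisor produced this way matches the projectivized tangent cone $Q$ under $\widetilde{b}$. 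Once these identifications are in place, the rest of the argument is essentially formal.
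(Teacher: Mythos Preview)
Your argument is correct. Note, however, that the paper does not actually supply a proof of this corollary: it is stated immediately after the proposition analyzing lines through $p$ and is treated as an immediate consequence, with no further justification given. So there is nothing to compare against at the level of detail you have written.

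That said, your outline aligns with what the paper implicitly relies on. Birationality of $\pi$ is exactly the content of the proposition's line analysis: a general line through $p$ meets $X$ in a unique residual point, giving the rational inverse away from $C$. Your construction of $\phi$ by restricting the $\bbP^1$-bundle projection $\widetilde{\bbP}^n\to N$ to the proper transform is the standard way to make the resolution of $\pi$ explicit, and the identification $E\cong Q$ via the projectivized tangent cone is precisely why $\phi|_E$ lands in $Q$. The bookkeeping concerns you flag (proper transform versus intrinsic blow-up, and matching the exceptional divisor with $Q$ under $\widetilde b$) are genuine but routine for a hypersurface singular at $p$; once checked in one chart they cause no trouble. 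In short, you have written out carefully what the paper leaves to the reader.
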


The following theorem, adapted from \cite{W99} to the case of cubic hypersurfaces, shows how the singularities of $C$ determine the singularities of $X$ away from $p$. This is the result we refer to as the \textit{projection method} or the \textit{projection theorem}. A similar statement for cubic surfaces appears in \cite{BW}.

\begin{theorem}[{\cite{W99}, Theorem 2.1}] \label{projthm}
	Consider a point $q\in C$. If $Q$ and $S$ are both singular at $q$, then $X$ is singular along the line $<p,q>$. Otherwise write $T$ for the type of the singularity of $C$ at $q$ in the (locally) smooth variety $Q$ (or $S$).
	
	\begin{itemize}
		\item[(i)] If $Q$ is smooth at $q$, $X$ has a unique singular point on the line $<p,q>$ other than $p$, and the singularity there has type $T$.
		\item[(ii)] If $Q$ is singular at $q$, the only singular point of $X$ on $<p,q>$ is $p$, and the blow-up $\Xb$ of $X$ at $p$ has a singular point of type $T$ at $\phi\vert_E^{-1}(q)$ where $\phi$ is as in Corollary \ref{corblowup}.
	\end{itemize}
	
\end{theorem}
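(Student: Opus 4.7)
\emph{Approach.} The plan is to work in local coordinates near $q$ (and, in case (ii), near $\phi\vert_E^{-1}(q)\in\Xb$), read off the local equation of $X$ or of $\Xb$, and via a sequence of smooth coordinate changes exhibit it as the stabilization of the local equation of $C$ in the smooth ambient $Q$ (respectively $S$) by a non-degenerate two-variable quadratic form $uv$. The conclusion is then immediate from the standard stable-equivalence theorem for isolated hypersurface singularities: adding such a form does not change the analytic type of the germ.

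\emph{Setup and singular locus on the line.} First choose homogeneous coordinates on $\bbP^n$ with $q=[0{:}\ldots{:}0{:}1]$, and work in the affine chart $x_n=1$ with coordinates $z_0,z_1,\ldots,z_{n-1}$, so that $\langle p,q\rangle$ is the $z_0$-axis. Writing $z=(z_1,\ldots,z_{n-1})$ and decomposing by homogeneous degree,
\[
\tilde f_2(z):=f_2(z,1)=\ell(z)+a(z),\qquad \tilde f_3(z):=f_3(z,1)=m(z)+b(z)+c(z),
\]
with $\ell,m$ linear, $a,b$ quadratic and $c$ cubic (the constant terms vanish since $q\in C$). Then $Q$ is smooth at $q$ iff $\ell\neq0$, $S$ is smooth at $q$ iff $m\neq0$, and $X$ is cut out in this chart by $f=z_0\tilde f_2(z)+\tilde f_3(z)$. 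A direct partial-derivative computation yields $\partial_{z_0}f\vert_{z=0}=0$ and $\partial_{z_i}f\vert_{z=0}=z_0\ell_i+m_i$, so $X$ is singular along the whole line iff $\ell=m=0$, proving the first assertion. In case (ii) ($\ell=0$, $m\neq0$) no singular point of $X$ lies on the affine part of the line, so none besides $p$. In case (i) ($\ell\neq0$) the singular locus on the line is cut by $z_0\ell+m\equiv0$ in $z$: there is no extra singular point unless $m\in\bbC\ell$, in which case the unique one is $z_0=-\lambda$, where $m=\lambda\ell$. This last condition is precisely that $C$ be singular at $q$ in the smooth hypersurface $Q$.

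\emph{Case (i): identifying the type.} Assume $m=\lambda\ell$ (otherwise there is nothing to prove). After a linear change of $z_1,\ldots,z_{n-1}$ we may take $\ell=y_1$, and set $\tilde z_0=z_0+\lambda$ so that the singular point sits at the origin:
\[
f=\tilde z_0\bigl(y_1+a(y)\bigr)+\bigl(b(y)-\lambda a(y)+c(y)\bigr).
\]
Introduce $u=y_1+a(y)$ in place of $y_1$ (a smooth coordinate since $\partial u/\partial y_1\vert_0=1$). Taylor expanding the second summand in $u$ gives $G(y_2,\ldots,y_{n-1})+u\cdot h(u,y_2,\ldots,y_{n-1})$, and substituting $w=\tilde z_0+h$ puts the equation into the form
\[
f=uw+G(y_2,\ldots,y_{n-1}).
\]
Since $uw$ is a non-degenerate hyperbolic quadratic form in two variables, stable equivalence identifies this germ with $\{G=0\}$. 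A short implicit-function calculation identifies $G$, up to a unit, with the local equation of $C$ in the smooth $(n-2)$-fold $Q$ at $q$: on $Q$ one solves $y_1+a(y)=0$ for $y_1$ and substitutes into $\tilde f_3=\lambda y_1+b+c$. Hence the singularity is of type $T$.

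\emph{Case (ii) and main obstacle.} Pass to the standard affine chart $(v_1,\ldots,v_{n-1},u_n)$ of the blowup of $\bbP^n$ at $p$, where $x_i/x_0=u_nv_i$ for $i<n$ and $x_n/x_0=u_n$. Substituting into $f$ shows that $\Xb$ is cut out by $\tilde f_2(v)+u_n\tilde f_3(v)=0$, that $\phi$ is the projection $(v,u_n)\mapsto v\in N$, and that $\phi\vert_E^{-1}(q)=(v=0,u_n=0)$. Since $\ell=0$ and $m\neq0$, after a linear change we may take $m=v_1$, so $\xi:=\tilde f_3(v)=v_1+b+c$ is smooth at $v=0$ and can replace $v_1$ as a coordinate. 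Running the same Taylor-expansion trick as in case (i) yields
\[
\Xb=\{\xi w+G'(v_2,\ldots,v_{n-1})=0\},
\]
where $G'=\tilde f_2\vert_S$ is the local equation of $C$ in the smooth hypersurface $S$ at $q$; stable equivalence again concludes. The main delicate point of the argument — in both cases — is this final identification of $G$ (respectively $G'$) with the local equation of $C$ in the smooth ambient hypersurface $Q$ (respectively $S$): one must track carefully how the implicit elimination of $y_1$ (respectively $v_1$) affects the higher-order terms. All other steps are routine local changes of variable.
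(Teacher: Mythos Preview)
The paper does not prove this theorem; it is quoted from \cite{W99} without argument, so there is no in-paper proof to compare against. That said, your direct local-coordinate verification is correct and is essentially the standard argument (and the one Wall gives): centre affine coordinates at $q$, read off the equation $z_0\tilde f_2+\tilde f_3$, and after the obvious smooth substitutions split off a hyperbolic $uw$-factor to reduce to the defining equation of $C$ in the smooth ambient $Q$ (case (i)) or $S$ (case (ii) on the blow-up). Your identification of $G$ and $G'$ with those restricted equations is right; the point you flag as delicate---that on $\{y_1+a=0\}$ the linear term $\lambda y_1$ of $\tilde f_3$ becomes $-\lambda a$ and is absorbed into the quadratic part---is exactly what makes the bookkeeping work, and you have it. One cosmetic remark: the theorem's phrasing ``$X$ has a unique singular point'' in case (i) should be read with the convention that a smooth point of $C$ yields a smooth point of $X$ (type $A_0$), which is how you have interpreted the case $m\notin\bbC\ell$.
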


\subsection{Dynkin diagrams of singularities} \label{DynkinDiagrams}

Consider a holomorphic function $f: \bbC^n\rightarrow\bbC$ which has an isolated critical point at $p=(0,\ldots,0)$. If we pick sufficiently small neighbourhoods $p\in U\subset\bbC^n$ and $f(p)\in T\subset\bbC$, the hypersurface $X_t=f^{-1}(t)\cap U$ is smooth for each $t\in T$, $t\neq f(p)$. One can choose a \textit{distinguished basis} $\delta_1,\ldots,\delta_\mu$ of \textit{vanishing cycles} in $H_{n-1}(X_t,\bbZ)$ (see e.g. Section 1 of \cite{Ebeling2019} for the full definitions and properties of vanishing cycles and distinguished bases). The number $\mu$ of elements in this basis is equal to the Milnor number of $p$ (Definition \ref{Milnor}).

\begin{definition}
	Let $\langle\:,\:\rangle$ be the intersection form on $H_{n-1}(X_t,\bbZ)$. The matrix $(\langle\delta_i,\delta_j\rangle)_{i,j=1,\ldots,\mu}$ is called the \textit{intersection matrix} of the singularity of $f$ at $p$ with respect to the distinguished basis $\delta_1,\ldots,\delta_\mu$.
\end{definition}

\begin{proposition} [{\cite{Ebeling2019}, Proposition 1}]
	A vanishing cycle $\delta$ has the self-intersection number
	\[ \langle\delta,\delta\rangle = (-1)^{n(n-1)/2}(1-(-1)^n) = \begin{cases} \;\;\:0 & \text{for $n$ even,} \\ \;\;\:2 & \text{for $n\equiv1\Mod4$,} \\ -2 & \text{for $n\equiv3\Mod4$.} \end{cases} \]
\end{proposition}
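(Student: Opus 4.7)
The plan is to reduce to the case of an $A_1$ (Morse) singularity and then realize the self-intersection as an Euler number, tracked with complex orientations to get the correct sign.

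First, I would recall that by a standard Morsification argument, any isolated critical point can be perturbed so that all critical points become nondegenerate, and a distinguished basis of the original singularity is given by vanishing cycles, one per nondegenerate critical point. The self-intersection number of any such vanishing cycle is therefore the self-intersection number of the vanishing cycle associated with an $A_1$ singularity, so it suffices to treat $f(x_1,\dots,x_n)=x_1^2+\dots+x_n^2$.

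For this model, take the Milnor fiber $X_t=\{f=t\}\cap U$ with $t>0$ real. Writing $x_j=u_j+iv_j$, the equation $\sum(u_j^2-v_j^2)=t$, $\sum u_jv_j=0$ shows that $X_t$ deformation-retracts onto the real sphere $S^{n-1}=\{v=0,\ |u|^2=t\}$, and the retraction identifies a tubular neighbourhood of this sphere in $X_t$ with the total space of the tangent bundle $TS^{n-1}$, with $S^{n-1}$ sitting as the zero section. This sphere is precisely the vanishing cycle $\delta$.

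Consequently, $\langle\delta,\delta\rangle$ equals the self-intersection of the zero section in $TS^{n-1}$, which by the Poincaré–Hopf theorem (or the definition of the Euler number of an oriented rank-$(n-1)$ real vector bundle over a closed $(n-1)$-manifold) is $\pm\chi(S^{n-1})=\pm\bigl(1+(-1)^{n-1}\bigr)$. This immediately gives $0$ when $n$ is even, matching the claim, and $\pm 2$ when $n$ is odd.

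The remaining task, and the only delicate point, is pinning down the sign when $n$ is odd. Here I would compare two orientations on the total space of $TS^{n-1}$: the orientation induced on $X_t$ by its complex structure (inherited from $\mathbb{C}^n$), and the orientation of $TS^{n-1}$ coming from the chosen orientation of $S^{n-1}$ together with the fibre orientation. A direct calculation in coordinates near a point of the zero section—using the chart $u_j=\sqrt{t}\,e_j+\text{tangent correction}$, $v_j=\text{fibre coordinate}$ and computing the Jacobian of $(u,v)\mapsto(u+iv)$ relative to the complex orientation $du_1\wedge dv_1\wedge\dots\wedge du_n\wedge dv_n$—produces precisely the sign $(-1)^{n(n-1)/2}$. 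Combining this with $\chi(S^{n-1})=1-(-1)^n$ yields the stated formula. This orientation/sign calculation is the only nontrivial step; everything else is a standard application of Morsification together with the identification of the Milnor fibre of an $A_1$ singularity with $TS^{n-1}$.
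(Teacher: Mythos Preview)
The paper does not supply its own proof of this proposition; it is quoted verbatim from Ebeling's survey with a citation and no argument. So there is nothing in the paper to compare your proof against.

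That said, your argument is correct and is exactly the classical one (going back to Milnor and reproduced in Arnold--Gusein-Zade--Varchenko and in Ebeling): reduce to the Morse case by Morsification, identify the Milnor fibre of $x_1^2+\dots+x_n^2=t$ with the total space of $TS^{n-1}$ with the real sphere as zero section, and read off $\langle\delta,\delta\rangle$ as the Euler number $\chi(S^{n-1})=1-(-1)^n$ up to the orientation sign. Your identification of the sign comparison between the complex orientation and the base-then-fibre orientation as the only delicate step is accurate; the permutation of $du_2\wedge dv_2\wedge\cdots\wedge du_n\wedge dv_n$ into $du_2\wedge\cdots\wedge du_n\wedge dv_2\wedge\cdots\wedge dv_n$ contributes $(-1)^{(n-1)(n-2)/2}$, which agrees with $(-1)^{n(n-1)/2}$ for all odd $n$ (and the even-$n$ case is zero regardless). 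There is no gap.
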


\begin{definition} \label{stable}
	A \textit{stabilization} of $f$ is a function $\tilde{f}:\bbC^{n+k}\rightarrow\bbC$ of the form
	$$
	\tilde{f}=f+z_{n+1}^2+\ldots+z_{n+k}^2.
	$$
	Two function-germs of different number of variables are said to be \textit{stably equivalent} if they admit equivalent stabilizations.
\end{definition}

Stably equivalent function-germs have the same Milnor and Tjurina numbers. While their intersection forms may have different properties (for instance, such form is symmetric for odd $n$ and skew-symmetric for even $n$), they determine one another. Moreover, there are exactly four distinct intersection forms in a class of stably equivalent singularities (\cite{Ebeling2019}, Theorem 13).

\begin{definition}
	We call the symmetric form $(\:,\:)$ on $H_{n-1}(X_t,\bbZ)$ associated with the intersection matrix of the stabilization $\tilde{f}$ of $f$ in $n+k$ variables such that $n+k\equiv1\Mod4$ the \textit{stable intersection form} of the singularity. The group $H_{n-1}(X_t,\bbZ)$ together with $(\:,\:)$ is the \textit{(stable) Milnor lattice} of $p$.
\end{definition}

For the rest of this paper, we choose to work with distinguished bases corresponding to a stabilization with $n+k\equiv1\Mod4$ and stable intersection forms.

\medskip

For a distinguished basis $\delta_1,\ldots,\delta_\mu$, we construct the corresponding \textit{Dynkin diagram}. A vertex labeled $i$ is assigned to each root $\delta_i$; two vertices $i$ and $j$ are connected by a (dashed) edge with index $k$ if $(\delta_i,\delta_j)=k$ and $k<0$ (resp. $k>0$). Since the self-intersection $(\delta_i,\delta_i)=2$ for any distinguished basis element $\delta_i$, the Dynkin diagram completely determines the quadratic form.

\medskip

The following proposition relates the notion of Dynkin diagrams above with the usual notion of $ADE$ Dynkin diagrams coming from Lie theory:

\begin{proposition} [{\cite{AGLV}, Chapter 2.2.5}]
	The quadratic form of a simple singularity of type $L$ is isomorphic to the one given by the Dynkin diagram of type $L$.
\end{proposition}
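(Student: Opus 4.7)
The plan is to verify the proposition case by case over the seven normal forms $A_n$ ($n\geq 1$), $D_n$ ($n\geq 4$), $E_6$, $E_7$, and $E_8$, by exhibiting, for each one, an explicit distinguished basis of vanishing cycles whose pairwise intersection numbers reproduce the corresponding $ADE$ Dynkin diagram of Lie theory. Since the stable Milnor lattice is an invariant of the stable equivalence class --- with the convention $n+k\equiv 1\Mod{4}$ fixing self-intersections at $+2$ --- the computation can be carried out in any convenient number of variables, and the resulting lattice is in each case positive-definite, even, and generated by vectors of norm $2$; matching it to the lattice defined by the $L$-type Dynkin diagram then reduces, via the classification of irreducible root lattices, to a comparison of ranks and discriminants.

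For the $A_n$ case, I would work directly with the one-variable germ $f(x)=x^{n+1}$ (which already has $1\equiv 1\Mod{4}$ variables, so no stabilization is needed). Choosing any real Morsification $f_s$ of degree $n+1$ whose derivative has $n$ distinct real roots, the Milnor fiber $f_s^{-1}(t)$ for a regular value $t$ placed above all critical values consists of $n+1$ real points on a line, and a distinguished basis of vanishing $0$-cycles is given by the differences of consecutive points. Adjacent vanishing cycles share exactly one endpoint and thus pair to $-1$ in the stable intersection form, while non-adjacent ones are disjoint, recovering the $A_n$ chain.

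For $D_n$ and $E_6$, $E_7$, $E_8$, I would start from the plane-curve normal forms $x^{n-1}+xy^2$, $x^3+y^4$, $x^3+xy^3$, $x^3+y^5$, stabilized by three extra squares so that the total number of variables is $\equiv 1\Mod{4}$, and then apply the A'Campo--Gusein-Zade algorithm: pick a real Morsification $f_s$ with all critical values real and distinct and whose real zero fiber has only ordinary nodes; then the saddles of $f_s$ index a distinguished basis of vanishing cycles, and their intersection numbers can be read off combinatorially from the real picture of $\{f_s=0\}$. Each of the four normal forms above admits a standard real Morsification (documented in Arnold--Gusein-Zade--Varchenko) whose combinatorial output is exactly the Dynkin diagram $D_n$, $E_6$, $E_7$, $E_8$.

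The main obstacle is the combinatorial bookkeeping in the $E$ cases: the algorithm produces a matrix that depends on both the chosen Morsification and the ordering of the saddles, so one must either arrange for the standard Dynkin diagram to appear directly from a well-chosen Morsification, or else perform explicit basis changes --- verifying that each is a composition of Picard--Lefschetz reflections, so that the property of being a distinguished basis is preserved. A cleaner shortcut, once any explicit basis has been produced, is to observe that the resulting lattice is even and positive-definite of rank equal to the Milnor number $\mu(L)$ with discriminant matching that of the root lattice of type $L$; the classification of irreducible positive-definite even root lattices then forces the isomorphism.
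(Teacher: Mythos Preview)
The paper does not give a proof of this proposition; it is quoted with a citation to \cite{AGLV}, Chapter 2.2.5, and used as a black box. Your outline is a faithful sketch of the standard argument that appears in that reference and in the companion Arnold--Gusein-Zade--Varchenko volumes: the direct one-variable computation for $A_n$, and the A'Campo--Gusein-Zade real-Morsification procedure for the two-variable normal forms $D_n$, $E_6$, $E_7$, $E_8$, stabilized into the correct residue class of dimensions. So there is nothing to compare against in the paper itself; what you have written is essentially the proof the cited source supplies.

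One remark on the lattice-classification shortcut you propose at the end: it is a legitimate route, but note that it is not logically independent of the direct computation. To invoke the classification of irreducible positive-definite even root lattices you need to know in advance that the Milnor lattice of a simple singularity is positive-definite and is spanned by its norm-$2$ vanishing cycles; both facts are typically established by the very same Morsification analysis, so the shortcut mainly saves the final basis-change bookkeeping rather than the substantive work.
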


We can describe all of the adjacencies (see Definition \ref{adjacent}) of $ADE$ singularities using the theorems of Brieskorn, Grothendieck and Slodowy below. Before stating their results, recall the definition of an induced subgraph:

\begin{definition} \label{induced}
	An \textit{induced subgraph} of a graph $G$ is a subset of the vertices of $G$ together with any edges whose endpoints are both in this subset.
\end{definition}

\begin{theorem}[{Brieskorn--Grothendieck, \cite{AGLV}, Chapter 2.5.9}]
	A simple singularity of type $L$ is adjacent to a simple singularity of type $K$ if and only if the Dynkin diagram of $K$ embeds into the Dynkin diagram of $L$.
\end{theorem}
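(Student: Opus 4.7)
The plan is to prove the two implications separately, using the Brieskorn–Grothendieck picture that relates simple singularities to the adjoint quotient of the corresponding simple Lie algebra. Fix an ADE type $L$, and let $\mathfrak{g}_L$ be the complex simple Lie algebra of that type with Cartan $\mathfrak{h}_L$ and Weyl group $W_L$. Recall that the simple singularity of type $L$ can be realized as the intersection of the nilpotent cone with a transverse slice to the subregular orbit in $\mathfrak{g}_L$, and Grothendieck's simultaneous resolution identifies the semiuniversal deformation base of this singularity with $\mathfrak{h}_L/W_L$. The fiber over $[t]\in \mathfrak{h}_L/W_L$ is smooth iff $t$ is regular; otherwise its singularities are controlled by the root subsystem $\Phi_t \subset \Phi_L$ of roots vanishing on $t$.

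For the \emph{if} direction, suppose the Dynkin diagram of $K$ embeds into that of $L$ as an induced subgraph. Choose a basis of simple roots of $\Phi_L$ and let $\Pi_K\subset\Pi_L$ correspond to the embedded sub-diagram. Pick $t\in\mathfrak{h}_L$ which is annihilated by every root in $\Pi_K$ but by no other positive root; such $t$ exists by linear independence of the $\Pi_L$. Then $\Phi_t$ is the root system generated by $\Pi_K$, hence of type $K$, and the corresponding fiber acquires simple singularities whose combined type is precisely $K$ (plus smooth points). This exhibits an explicit one-parameter deformation of the $L$-singularity acquiring a $K$-singularity, i.e.\ $L$ is adjacent to $K$. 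Alternatively, one can argue by hand, writing down normal forms (e.g.\ $x^{n+1}+y^2$ for $A_n$, $x^{n-1}+xy^2$ for $D_n$, etc.) and perturbing to realize each sub-diagram of the standard list; this is a finite case-check.

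For the \emph{only if} direction, assume $L$ is adjacent to $K$. Pulling back the semiuniversal deformation of the $L$-singularity along the given deformation, we land at a point $[t]\in \mathfrak{h}_L/W_L$ whose fiber carries a $K$-singularity. By Grothendieck's description, $\Phi_t$ is a root subsystem of $\Phi_L$ of type containing $K$. The vanishing cycles of the $K$-singularity sit as roots inside the Milnor lattice of $L$, which by the preceding proposition is the root lattice of $\Phi_L$; since the intersection form is preserved, this yields an isometric embedding of the root system of $K$ into $\Phi_L$. In the ADE case this suffices: up to the $W_L$-action, any such sub-root-system admits a basis of simple roots extending to a basis of $\Phi_L$ of the ambient type, hence corresponds to an induced subgraph of the Dynkin diagram of $L$.

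The main obstacle is the last step of the \emph{only if} direction: not every sub-root-system of $\Phi_L$ is spanned by a subset of a chosen simple system (e.g.\ $A_1+A_1\subset A_3$ as long roots, or the Borel--de Siebenthal subsystems such as $A_4+A_4\subset E_8$). The resolution is that these ``non-simple'' embeddings are nevertheless realized as induced subgraphs for \emph{some} choice of simple system, which is the content of the sub-diagram enumeration used implicitly in AGLV, Chapter 2.5.9. Once this combinatorial fact is in place, both directions of the theorem follow from the deformation-theoretic picture above.
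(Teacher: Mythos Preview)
The paper does not prove this theorem; it is quoted from \cite{AGLV} as a classical result of Brieskorn and Grothendieck, so there is no proof in the paper to compare against.

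Your outline via the Brieskorn--Slodowy picture is the standard route and is essentially correct, but the ``obstacle'' you flag in the last paragraph is not a genuine one, and it arises only because you switch midway from the Lie-algebraic description to a lattice argument. If you stay with the description of the semiuniversal base as $\mathfrak{h}_L/W_L$, then for any $t\in\mathfrak{h}_L$ the subsystem $\Phi_t=\{\alpha\in\Phi_L:\alpha(t)=0\}$ is automatically a \emph{parabolic} (Levi-type) subsystem: after $W_L$-conjugating $t$ into the closed fundamental chamber, a positive root $\beta=\sum c_i\alpha_i$ vanishes on $t$ iff $c_i=0$ whenever $\alpha_i(t)\neq 0$, so $\Phi_t=\Phi_L\cap\mathrm{span}\{\alpha_i\in\Pi_L:\alpha_i(t)=0\}$. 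Hence the Dynkin type of $\Phi_t$ is exactly an induced subgraph of the Dynkin diagram of $L$, and the type $K$ of any singular point of the fibre is a connected component of that induced subgraph, hence itself an induced subgraph. No extra combinatorial check is needed.

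The Borel--de Siebenthal phenomena you mention (e.g.\ $A_4+A_4\subset E_8$, or $D_8\subset E_8$) arise for elements of the maximal \emph{torus} in the group, where the vanishing condition $\alpha(t)=1$ is multiplicative; they do not occur for $t$ in the Cartan \emph{subalgebra}, which is what governs the semiuniversal deformation here. So your detour through the Milnor-lattice embedding and the worry about non-simple sub-root-systems can simply be deleted, and the ``only if'' direction then goes through cleanly.
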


\begin{definition}
	A \textit{combination of singularities} is the unordered set of all singularities on a variety or complex space. If the combination consists of singularities $K_1,\ldots,K_m$, we denote it as $K_1+\ldots +K_m$.  We use the terms combination, \textit{constellation} (in the $A_n$ case) or \textit{configuration of singularities} interchangeably.
\end{definition}

\begin{theorem}[{Brieskorn--Grothendieck--Slodowy, \cite{slodowy}, Chapter 8.10}] \label{Gr}
	A simple singularity of type $L$ is adjacent to a combination of simple singularities $K_1+\ldots+K_m$ if and only if the disjoint union of the Dynkin diagrams of $K_1,\ldots,K_m$ is an induced subgraph of the Dynkin diagram of $L$.
\end{theorem}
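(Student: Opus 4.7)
The plan is to deduce the theorem from the Lie-theoretic machinery of Brieskorn and Slodowy. Let $\mathfrak{g}$ be the complex simple Lie algebra of type $L$, with Cartan subalgebra $\mathfrak{h}$, Weyl group $W$, root system $R$, and adjoint quotient $\chi\colon\mathfrak{g}\to\mathfrak{h}/W$. First I would pick a subregular nilpotent element $e\in\mathfrak{g}$ together with a Slodowy slice $S$ at $e$. By Brieskorn's theorem, the zero fiber $\chi^{-1}(0)\cap S$ has a simple singularity of type $L$ at $e$, and the restriction $\chi|_S\colon S\to\mathfrak{h}/W$ is a representative of the semi-universal deformation of this singularity. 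Consequently, every adjacency of $L$ to a combination $K_1+\cdots+K_m$ is induced from this single family by pulling back along a map into $\mathfrak{h}/W$, so the problem reduces to classifying which combinations occur as fibers of $\chi|_S$.

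Next, I would stratify the base. The discriminant $\Delta\subset\mathfrak{h}/W$ is the image of the union of reflection hyperplanes in $\mathfrak{h}$. Stratify $\mathfrak{h}$ by flats (intersections of reflection hyperplanes); to a flat $F$ associate the sub-root-system $R_F=\{\alpha\in R:\alpha|_F=0\}$. This descends to a stratification of $\mathfrak{h}/W$ indexed by $W$-orbits of flats. The key input from Slodowy's local analysis is that for $t$ in the stratum corresponding to $R_F$, the fiber $\chi^{-1}(t)\cap S$ carries isolated simple singularities whose types are exactly the irreducible components of $R_F$. Combined with semi-universality, this identifies the adjacent combinations of $L$ with the $W$-orbits of sub-root-systems of the form $R_F$, equipped with their decomposition into irreducible components.

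The final step is to match these flat-stabilizer sub-root-systems against induced subgraphs. Any flat $F$ can be moved by $W$ into the closure of the fundamental Weyl chamber, where it is cut out by some subset $\{H_{\alpha_i}\}_{i\in I}$ of simple-root walls. The corresponding sub-root-system $R_F$ is then the parabolic one generated by $\{\alpha_i\}_{i\in I}$, i.e.\ the root subsystem whose Dynkin diagram is the induced subgraph on $I$. This yields a bijection between $W$-orbits of discriminant strata and induced subgraphs of the Dynkin diagram of $L$, and the singularity types of the corresponding fibers are read off as the connected components of the subgraph. Both directions of the theorem then follow: the ``if'' direction by exhibiting the deformation over a generic point of the stratum associated with a given induced subgraph, and the ``only if'' direction by using semi-universality to force any adjacency to come from some such stratum.

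The main obstacle is Slodowy's local description of the fibers: showing that each fiber really does decompose into exactly the simple singularities dictated by $R_F$ (and nothing more) is the substantive Lie-theoretic content, and requires carefully comparing the transverse slice to smaller Slodowy slices in the Levi subalgebras of $R_F$. A secondary subtlety I would need to flag is that only parabolic sub-root-systems---those coming from subsets of simple roots up to $W$-action---appear as flat stabilizers in the Coxeter arrangement. This is what rules out nonstandard embeddings such as $A_8\hookrightarrow E_8$ given by Borel--de Siebenthal and justifies phrasing the statement in terms of induced subgraphs of the ordinary (rather than extended) Dynkin diagram.
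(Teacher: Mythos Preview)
The paper does not prove this theorem; it is quoted as a known result with a reference to Slodowy's monograph (Chapter~8.10) and is used as a black box throughout. There is therefore no ``paper's own proof'' to compare against.

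That said, your sketch is a faithful outline of the argument in the cited source. The chain Brieskorn's theorem $\Rightarrow$ semi-universality of $\chi|_S$ $\Rightarrow$ stratification of $\mathfrak{h}/W$ by $W$-orbits of flats $\Rightarrow$ identification of flat stabilizers with parabolic sub-root-systems $\Rightarrow$ induced subgraphs is exactly how Slodowy organizes the proof, and you have correctly flagged the two genuinely nontrivial points: (i) the local fiber description over each stratum, which requires comparing $S$ to slices in Levi subalgebras, and (ii) the fact that only parabolic sub-root-systems arise, which is why the statement involves induced subgraphs of the ordinary Dynkin diagram rather than the extended one. Your remark about Borel--de~Siebenthal embeddings such as $A_8\hookrightarrow E_8$ being excluded is apt and is precisely the content of the word ``induced.'' If you were to expand this into a full proof you would be reproducing Slodowy's argument, which is what the paper's citation already points to.
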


In general, any partition of an isolated singularity corresponds to a partition of a corresponding Dynkin graph, however such a Dynkin graph may not be unique.

\subsection{Cubic surfaces} \label{Cubic_surfaces}

A classification of cubic surfaces by their singularities was given by Schl\"{a}fli \cite{Schlafli} over a century ago. In \cite{BW}, Bruce and Wall present Schl\"{a}fli's classification in a more concise and modern way. Their result can be stated in a form similar to Theorem \ref{Gr} from the previous subsection:

\begin{theorem}[{\cite{BW}, Section 4}] \label{BW}
	A cubic surface with only isolated singularities has either one $\Esx$ singularity or a combination of $ADE$ singularities. A combination of $ADE$ singularities can occur on such a surface if and only if
	the disjoint union of the corresponding Dynkin diagrams is an induced subgraph of $\Esx$ (Figure~\ref{Esx}).
\end{theorem}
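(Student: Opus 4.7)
The plan is to apply the projection method of Subsection~\ref{projection} to the cubic surface $X \subset \bbP^3$, for which the base locus of $\pi^{-1}$ is a length-$6$ complete intersection $C = Q\cap S \subset \bbP^2$ of a plane conic and a plane cubic.

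First I would handle the case where $X$ is a cone. If $X$ is a cone over a plane cubic with an isolated singularity at the vertex $p$, then the base plane cubic must be smooth, so $X$ is a cone over an elliptic curve with $p$ a single $\Esx$ singularity; this accounts for the first alternative in the theorem. I would then assume $X$ is not a cone (so $f_2 \not\equiv 0$) and apply the projection method, with the claim that every singularity of $X$ is simple.

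Next I would run a case analysis according to the rank of $Q$, which by direct inspection of local normal forms determines the local type of $X$ at $p$: a smooth $Q$ gives an $A$-type singularity, rank $2$ gives a $D$-type, and rank $1$ gives an $E$-type, with the exact subscript determined by the contact of $C$ with $\mathrm{Sing}(Q)$ at $p$. The singularities of $X$ away from $p$ are then read off from Theorem~\ref{projthm} applied to the other points of $C$: each such point either is a singular point of $C$ in the smooth part of $Q$ (contributing an $A$-type on the corresponding line through $p$) or lies where $Q$ is singular but $S$ is not (contributing a $D$- or $E$-type after blowing up $p$). Pairs $(Q,S)$ for which $Q$ and $S$ are simultaneously singular at a common point must be excluded, since Theorem~\ref{projthm} would force a non-isolated singularity. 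Because $\deg C = 6$ is small, the remaining pairs $(Q, S)$ up to projective equivalence admit a short hand enumeration.

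Finally, I would compare the list so produced with induced subgraphs of $\Esx$. The maximal configurations arising are $E_6$, $A_5+A_1$, $3A_2$, and $4A_1$, which are precisely the maximal induced $ADE$ subgraphs of $\Esx$: since $\Esx$ has a unique trivalent vertex and longest path of length $4$, larger candidates such as $E_7$, $D_5+A_1$, or $5A_1$ do not embed. Every other induced $ADE$ subgraph of $\Esx$ is contained in one of these maximals, and the existence of the corresponding configurations on cubic surfaces follows from Brieskorn--Grothendieck--Slodowy (Theorem~\ref{Gr}) by deforming a cubic realizing the maximal one. The principal obstacle is the enumeration in the middle step: one must verify that each candidate pair $(Q,S)$ actually arises from an irreducible non-cone cubic surface, and that no unintended tangency between $Q$ and $S$ inflates a simple singularity of $X$ into a non-$ADE$ one---this is where a careful, case-by-case projective-geometry check is unavoidable.
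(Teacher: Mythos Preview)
The paper does not prove Theorem~\ref{BW}; it is quoted from \cite{BW}, and the Proposition, Table~\ref{Ta:cubicsurf}, and Example that follow it merely sketch the Bruce--Wall projection argument. Your outline is that same argument, so at the level of strategy there is nothing to compare.

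That said, there are two genuine gaps in your sketch. First, in the last paragraph you invoke only Theorem~\ref{Gr} to pass from the maximal configurations to all smaller induced subdiagrams. Brieskorn--Grothendieck--Slodowy is a statement about \emph{local} deformations of a single $ADE$ germ; it does not by itself guarantee that such a deformation can be realized inside the family of cubic surfaces in $\bbP^3$. For that you need the global versality result, Theorem~\ref{F:DPW}, which is precisely the ingredient the paper emphasizes throughout (and which Bruce--Wall bypass by enumerating every configuration directly). Second, your list of maximal $ADE$ configurations is wrong: $4A_1$ is not maximal, since $A_5+A_1$ deforms to it (delete two non-adjacent interior vertices of the $A_5$ chain). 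The correct maximals are $E_6$, $A_5+A_1$, $3A_2$, as the paper records in the Example after Theorem~\ref{F:DPW}.

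One further confusion: your description of points of $C$ lying on $\mathrm{Sing}(Q)$ as ``contributing a $D$- or $E$-type after blowing up $p$'' is misleading. By Theorem~\ref{projthm}(ii), such points do not create \emph{new} singularities of $X$; they determine the analytic type of the singularity \emph{at $p$ itself} (compare Table~\ref{Ta:cubicsurf}). On a cubic surface there is never more than one non-$A_n$ singularity, and when present it is the projection center.
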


\begin{figure}[htb]
	\begin{center}
		\includestandalone{E_6_tilde}
		\bigskip
		\caption{$\Esx$ graph (left) decomposes into $4A_1$}
		\label{Esx}
	\end{center}
\end{figure}

\begin{remark}
	The $\Esx$ graph above corresponds to a partial basis of vanishing cycles of an $\Esx$ singularity. However, this graph is not its full Dynkin diagram. Singularities of $\Esx$ type have Milnor number 8, while the $\Esx$ graph has 7 vertices. The notation $\Esx$ comes from extended Dynkin diagrams, and is more appropriate in the context of Theorem \ref{BW} than the equivalent $P_8$ or $T_{333}$ notation.
\end{remark}

For the rest of this subsection, let $X\subset\bbP^3$ be a cubic surface with an isolated singular point $p\in X$. Let $Q$, $S$ and $C$ be as in Section \ref{notation}. Note that if $Q\subsetneq\bbP^2$, then $C$ is a collection of $6$ points (taken with multiplicities). The analytic type of the singularity at $p$ can be determined by the types of singularities of $C$ along the singular locus of $Q$ (see Table \ref{Ta:cubicsurf}). It is convenient to organize the possible cases by the type of conic $Q$:

\begin{proposition}[{\cite{BW}, Section 2}]
	\hfill
	\begin{itemize}
		\item[1)] If $Q=\bbP^2$, then the singularity at $p$ is of type $\Esx$ (a cone over a smooth elliptic curve).
		\item[2)] If $Q$ is a double line, then the singularity at $p$ is of type $D_4$, $D_5$ or $E_6$ depending on the type of intersection of $Q$ and $S$ (three simple points, one simple and one double point, and one triple point respectively).
		\item[3)] If $Q$ is reducible, but reduced, i.e. two distinct lines meeting in $v$, then the singularity at $p$ is of type $A_n$ for $2\le n\le 5$, where $n$ is determined by the intersection multiplicity of $Q$ and $S$ at $v$ (which can be $0$, $2$, $3$ and $4$ respectively).
		\item[4)] If $Q$ is an irreducible (thus smooth) conic, then the singularity at $p$ is $A_1$.   
	\end{itemize}
\end{proposition}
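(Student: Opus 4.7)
My plan is to classify the analytic type at $p$ by working in the affine chart $x_0 = 1$, where the defining equation of $X$ becomes $f_2(y_1, y_2, y_3) + f_3(y_1, y_2, y_3) = 0$, and to split into four cases according to $\mathrm{rank}(f_2) \in \{0, 1, 2, 3\}$, which parametrizes the four types of conic $Q$ listed in the proposition. The two extreme cases are immediate: if $\mathrm{rank}(f_2) = 3$, the Morse Lemma gives $f \sim y_1^2 + y_2^2 + y_3^2$, hence $A_1$ (case (4)); if $f_2 \equiv 0$, then $f = f_3$ is homogeneous and $X$ is locally an affine cone over the plane cubic $\{f_3 = 0\}$, which the isolated singularity hypothesis forces to be smooth, hence elliptic, giving a simply-elliptic singularity of type $\Esx$ (case (1)).

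For case (2), I would choose coordinates so that $f_2 = y_1^2$. The Splitting Lemma then yields $f \sim y_1^2 + g(y_2, y_3)$, where the $3$-jet of $g$ equals $f_3(0, y_2, y_3)$: writing $f_3 = y_1 \cdot h(y_1, y_2, y_3) + f_3(0, y_2, y_3)$ with $h$ homogeneous of degree $2$, the Splitting Lemma correction $-h^2/4$ has order $\ge 4$ and so does not affect the $3$-jet. The binary cubic $f_3(0, y_2, y_3)$ is precisely the equation on $Q_{\mathrm{red}} = \{y_1 = 0\}$ cutting out $Q_{\mathrm{red}} \cap S$. Three distinct roots give $g$ an ordinary triple point, hence $D_4$; a simple times a doubled linear factor gives the tangent cone of $g$ a triple point with two distinct tangent lines, hence $D_n$ with $n \ge 5$; a triple linear factor gives $E_n$ with $n \ge 6$. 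The isolatedness of the surface singularity at $p$ then forces non-vanishing of certain quartic coefficients of $g$ (coming from the $-h^2/4$ contribution and its iterates), which pin down exactly $D_5$ and $E_6$ in the latter two subcases.

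For case (3), take $f_2 = y_1 y_2$, so that $Q = L_1 \cup L_2$ with $L_1 \cap L_2 = v = [0:0:1]$. The Splitting Lemma reduces $f$ to $y_1 y_2 + g(y_3)$, an $A_n$ singularity with $n = \mu(f) = \mathrm{ord}_0 g - 1$. I would compute $\mu$ directly from the Jacobian ideal: the relations $y_2 \equiv -\partial_{y_1} f_3$ and $y_1 \equiv -\partial_{y_2} f_3$ allow eliminating $y_1, y_2$ iteratively in favor of $y_3$, yielding a power series in $y_3$ whose order equals $\mu$. On the geometric side, $i_v(Q, S) = i_v(L_1, S) + i_v(L_2, S) \in \{0\} \cup \{2, 3, 4, \ldots\}$, the value $1$ being excluded since $v$ lies on both lines of $Q$; the four values $i_v = 0, 2, 3, 4$ correspond respectively to $v \notin S$, to $v \in S$ smooth and transverse to $Q$, to $v \in S$ smooth and tangent to one of $L_1, L_2$, and to $v$ an ordinary node of $S$ with tangents $L_1, L_2$. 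A direct substitution in each of these cases produces $\mu = 2, 3, 4, 5$, matching the claimed $A_n$ types.

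The main technical obstacle is making the Splitting Lemma computations in cases (2) and (3) precise enough to distinguish between $D_5$ and $D_{\ge 6}$, between $E_6$ and $E_{\ge 7}$, and between different values of $n$ in the $A_n$ case; this requires invoking the isolated singularity hypothesis on $X$ as a whole to rule out the more degenerate configurations of higher Milnor number, which I expect to be the most delicate step.
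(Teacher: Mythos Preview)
The paper does not give its own proof of this proposition; it is quoted from \cite{BW} as background. Your overall strategy---stratify by $\mathrm{rank}(f_2)$, apply the Morse/Splitting Lemma, and read off the Arnold type---is the standard one and is essentially what Bruce and Wall do. Cases (1) and (4) are handled correctly, and your treatment of case (2) is also right: one checks directly that if the coefficient of $y_1y_3^2$ in $f_3$ vanishes (in the coordinates where $f_2=y_1^2$ and $f_3(0,y_2,y_3)$ is $y_2^2y_3$ or $y_2^3$), then the entire line $y_1=y_2=0$ lies in the singular locus of $X$, so isolatedness forces precisely $D_5$ and $E_6$ as you anticipate.

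There is, however, a concrete error in case (3). Your description of the subcase $i_v(Q,S)=4$ as ``$v$ an ordinary node of $S$ with tangents $L_1,L_2$'' is wrong on two counts. First, if $S$ were singular at $v$ then both $Q$ and $S$ would be singular there, and by the projection theorem $X$ would be singular along the whole line $\langle p,v\rangle$, contradicting the isolated-singularity hypothesis. Second, even setting that aside, a node of $S$ at $v$ with branch tangents $L_1,L_2$ gives $i_v(L_i,S)=3$ for each $i$, hence $i_v(Q,S)=6$, not $4$. The correct picture is that $S$ is \emph{smooth} at $v$ with an inflectional tangent equal to one of the $L_i$: then $i_v(L_1,S)=3$, $i_v(L_2,S)=1$ (say), summing to $4$. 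Once you have this, the bound $i_v\le 4$ (which you use but never justify) follows immediately: smoothness of $S$ at $v$ gives a unique tangent direction, so at most one of $i_v(L_i,S)$ can exceed $1$, and $\deg S=3$ together with $L_i\not\subset S$ caps that one at $3$. With this correction your case-by-case computation of $\mu\in\{2,3,4,5\}$ goes through.
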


\renewcommand{\arraystretch}{1.25}

\begin{table}[h]
	\begin{center}
		\begin{tabular}{|c | c | c|}\hline
			Singularity at $p$& Singular locus of $Q$&  Singularities of C contained in $Sing(Q)$ \\\hline \hline
			$E_6$& a line &$A_2$ (triple point)\\ \hline
			$D_5$& a line &$A_1$ (double point)\\ \hline
			$D_4$& a line & - \\ \hline
			$A_5$& a point &$A_3$ (point of multiplicity 4)\\ \hline
			$A_4$& a point &$A_2$\\ \hline
			$A_3$& a point &$A_1$\\ \hline
			$A_2$& a point & - \\ \hline
			$A_1$& - & - \\ \hline
		\end{tabular}
		\bigskip
		\caption{Simple singularities of cubic surfaces} \label{Ta:cubicsurf}
		\vspace*{-1em}
	\end{center}
\end{table}

\renewcommand{\arraystretch}{1}

After one has determined the type of singularity at $p$ on a cubic surface $X$, the types of the remaining singularities can be easily established by Theorem \ref{projthm}. One can also use the projection method to construct examples of cubic surfaces with a prescribed combination of singularities. Below we explain this in a bit more detail in the case when $Q$ is smooth.

\begin{example}
	Assume $Q$ is smooth and thus $X$ has an $A_1$ singularity at $p$ (see Table \ref{Ta:cubicsurf}). A point of multiplicity $m$ in $C$ corresponds to an $A_{m-1}$ singularity on $X$ by Theorem \ref{projthm}. For instance, if $C$ consists of three double points, then we get three additional $A_1$ singularities on $X$, and together with $p$ there are $4A_1$ singularities on $X$ in total.
	
	\smallskip
	
	Conversely, for any partition of 6, one can construct $Q$ and $S$ such that the multiplicities of points of $C$ give the same partition. There are 11 different partitions of 6, which give us 11 constellations of singularities on a cubic surface: $A_1$ (corresponds to $1+1+1+1+1+1$), $2A_1$ ($2+1+1+1+1$), $3A_1, 4A_1, A_2+A_1, A_2+2A_1, 2A_2+A_1, A_3+A_1, A_3+2A_1, A_4+A_1,$ and $A_5+A_1$. These are all of the possible combinations of singularities on a cubic surface containing an $A_1$ singularity.
\end{example}

\subsection{Deformations}

Consider the family $\bbP(V)$, $V=H^0(\bbP^n,\calO(d))$ of hypersurfaces of degree $d$ in $\bbP^n$. Let $X\in\bbP(V)$ be a hypersurface with only isolated singular points $p_1,\ldots,p_k$. For each $1\leq i\leq k$, we denote by $\Def(p_i)$ the deformation functor of the singularity of $X$ at $p_i$. The formal neighbourhood of $X$ in $\bbP(V)$ represents the functor that we denote by $\Def(X,\bbP(V))$. As every deformation of $X$ in $\bbP(V)$ induces a deformation of each of its singularities, we have a natural morphism of functors
$$
\Def(X,\bbP(V))\rightarrow\prod_{i=1}^{k}\Def(p_i).
$$
This global-to-local map is not always surjective, i.e. as we move $X$ in the space of degree $d$ hypersurfaces, we do not necessarily get all of the possible combinations of singularities that come from deforming each $p_i$ locally. Nevertheless, it is surjective in some cases by the following theorem of du Plessis and Wall:

\begin{theorem}[{du Plessis--Wall, \cite{DPW00}, Proposition 4.1}] \label{F:DPW}
	Given a complex hypersurface $X$ of degree $d$ in $\mathbb P^n$ with only isolated singularities, the family of hypersurfaces of degree $d$ induces a simultaneous versal deformation of all of the singularities of $X$, provided $\tau(X)<\delta(d)$, where $\tau(X)$ is the total Tjurina number of $X$ (see Definition \ref{Tjurina}), and $\delta(d)=16$, $18$ or $4(d-1)$, for   $d=3$, $4$ or $d\ge 5$, respectively.
\end{theorem}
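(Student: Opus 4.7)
The plan is to recast the versality statement as a question of imposing independent conditions on degree-$d$ forms, and then control that condition in terms of $\tau(X)$. First I would use that $\bbP(V)$ is smooth, so $\Def(X,\bbP(V))$ is a smooth deformation functor, and by a standard Tjurina-type result each local functor $\Def(p_i)$ of an isolated hypersurface singularity is smooth as well. A morphism between smooth deformation functors is versal (in fact, smooth) exactly when it is surjective on tangent spaces. Writing $T^1_{p_i} = \calO_{X,p_i}/J_{f,p_i}$ for the tangent space of $\Def(p_i)$, the claim therefore reduces to surjectivity of the natural linear map
\[
V = H^0(\bbP^n, \calO(d)) \longrightarrow \bigoplus_{i=1}^{k} T^1_{p_i},
\]
sending a polynomial $g$ to the tuple of its classes modulo the Jacobian ideal at each $p_i$. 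Euler's relation $\sum x_j \partial_j f = d\cdot f$ puts $f$ itself into the Jacobian ideal, so the trivial one-dimensional subspace $\bbC\cdot f\subset V$ automatically lies in the kernel.

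The next step is to reinterpret the target sheaf-theoretically. Let $\Sigma\subset\bbP^n$ be the Tjurina subscheme, the zero-dimensional subscheme defined by the ideal $(f,\partial_0 f,\ldots,\partial_n f)$; it is supported on $\{p_1,\ldots,p_k\}$ with $\calO_\Sigma = \bigoplus_i \calO_{X,p_i}/J_{f,p_i}$, so $h^0(\calO_\Sigma) = \tau(X)$. The tangent map above is then identified with the evaluation map
\[
H^0(\bbP^n,\calO(d)) \longrightarrow H^0(\Sigma,\calO_\Sigma),
\]
whose surjectivity is equivalent to the cohomological vanishing $H^1(\bbP^n,\calI_\Sigma(d))=0$, i.e.\ to the condition that $\Sigma$ imposes independent conditions on hypersurfaces of degree $d$.

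The heart of the proof is thus to prove: any zero-dimensional subscheme $\Sigma$ sitting inside the Jacobian scheme of a degree-$d$ hypersurface, of degree less than $\delta(d)$, imposes independent conditions on forms of degree $d$. I would attack this by a Cayley--Bacharach-type analysis using the Koszul complex on the $n+1$ partials $\partial_0 f,\ldots,\partial_n f$ (each of degree $d-1$). An obstruction to surjectivity corresponds to a form of degree $d$ vanishing on $\Sigma$ that is not in the ideal spanned by the partials; Koszul duality then forces the existence of a syzygy among the partials supported on $\Sigma$, and one can bound the space of such syzygies in terms of $\deg\Sigma$. The asymptotic growth $4(d-1)$ for $d\ge 5$ arises naturally from the degree of the partials times a small combinatorial factor, while the jumps $\delta(3)=16$ and $\delta(4)=18$ reflect low-degree obstructions that cannot be read off from pure dimension counting.

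The main obstacle is not the cohomological setup, which is standard, but pinning down the sharp constant $\delta(d)$. A soft Castelnuovo--Mumford-regularity argument produces a bound, but one that ignores the fact that $\Sigma$ comes from a single polynomial; extracting $\delta(d) = 4(d-1)$ requires exploiting that $\Sigma$ is a Jacobian scheme, essentially an (almost) complete intersection of $n+1$ forms of degree $d-1$, and using detailed Hilbert-function estimates. I would expect the cases $d=3$ and $d=4$ to be the technical core, since there $V$ is small and the generic-position arguments that work asymptotically must be replaced by a case analysis of possible Tjurina algebras.
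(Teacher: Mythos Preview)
The paper does not prove this theorem; it is quoted as Proposition~4.1 of \cite{DPW00} and used as a black box throughout. There is therefore no proof in the present paper to compare your proposal against.

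That said, your reduction is correct and is the standard one: since both deformation functors are smooth, versality amounts to surjectivity of the tangent map $H^0(\calO_{\bbP^n}(d)) \to H^0(\calO_\Sigma)$, equivalently to $H^1(\calI_\Sigma(d))=0$. Where your proposal remains a sketch is exactly where you say it does, namely extracting the sharp constant $\delta(d)$. The Cayley--Bacharach/regularity route you outline is not the one du Plessis and Wall take, and you yourself indicate it would at best give a soft bound. Their actual argument (developed across \cite{DPW00} and their earlier work on discriminants and vector fields) runs in the opposite direction: failure of surjectivity is equivalent to the existence of a nontrivial syzygy among the partials $\partial_i f$ of degree at most $d-2$, i.e.\ a low-degree polynomial vector field annihilating $f$, and they show, via Gorenstein duality in the Jacobian/Tjurina algebra, that the presence of such a syzygy forces $\tau(X)\ge\delta(d)$. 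The specific numbers $16$, $18$, $4(d-1)$ come out of this lower-bound computation. So your instinct that the Jacobian structure of $\Sigma$ is the essential input is right, but the mechanism is a lower bound on $\tau$ coming from a syzygy, not an upper bound on the conditions $\Sigma$ can impose.
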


\begin{remark}
	If $X$ is a cubic threefold with $\mu(X)\leq 15$, the family of cubic threefolds induces a simultaneous versal deformation of all of the singularities of $X$. It follows immediately from the theorem above and the fact that $\tau(X)\leq\mu(X)$ (see Remark \ref{TjM}).
\end{remark}

\begin{example} \label{Cubic_curves}
	It is not hard to check by a direct computation that the possible combinations of isolated singularities on cubic curves are $D_4$ (three concurrent lines), $A_3, A_2, 3A_1, 2A_1$, and $A_1$. By Theorem \ref{Gr}, $D_4$ is adjacent to the rest of the combinations. Theorem \ref{F:DPW} gives us a stronger statement that we can deform three concurrent lines in the family of cubic curves to a plane cubic with any of the listed combinations of singularities: since $\tau(D_4)=\mu(D_4)=4<16$, the global deformations of a cubic curve with a $D_4$ singularity surject onto the local deformations of $D_4$.
\end{example}

As we see in Example \ref{Cubic_curves} and Theorem \ref{BW}, all the information about combinations of singularities on cubic curves and surfaces can be obtained from their worst singularities which are $D_4$ and $\Esx$ respectively. We would like to get a similar description for cubic threefolds as well, especially since there are many more possible combinations of singularities in this case. Just as for curves and surfaces, the worst isolated singularity here is the cone over a smooth cubic hypersurface of one dimension lower:

\begin{proposition} \label{O16adj}
	For any combination of isolated singularities appearing on a cubic threefold, there exists a cubic threefold with an $O_{16}$ singularity adjacent to a cubic threefold with the given combination of singularities.
\end{proposition}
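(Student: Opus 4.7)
The plan is to exhibit, for each cubic threefold $X_1 \subset \bbP^4$ with the given combination $\Sigma$ of isolated singularities, a one-parameter family whose general fibre is projectively equivalent to $X_1$ and whose special fibre is a cone over a smooth cubic surface. Concretely, I would pick a singular point $p$ of $X_1$, place it at $[1:0:\ldots:0]$, and write the defining equation as $F = x_0 f_2 + f_3$ in the sense of Section~\ref{notation}. I then consider
$$X_t = \{t \cdot x_0 f_2 + f_3 = 0\} \subset \bbP^4, \qquad t \in \bbA^1.$$
For $t \neq 0$ the automorphism $[y_0:y_1:\ldots:y_4] \mapsto [t y_0 : y_1 : \ldots : y_4]$ is a projective equivalence $X_t \xrightarrow{\sim} X_1$, so $X_t$ still carries exactly the combination $\Sigma$. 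At $t = 0$ the fibre is $\{f_3 = 0\}$, the cone with vertex $p$ over the cubic surface $\{f_3 = 0\} \subset \{x_0 = 0\}$. Provided this surface is smooth, $X_0$ has an $O_{16}$ singularity at $p$ (and no other singularity), so the family realizes an $O_{16}$-cubic as a limit of $\Sigma$-cubics, which is precisely the required adjacency.

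The only nontrivial ingredient is arranging that $f_3$ defines a smooth cubic surface. This reduces to finding a hyperplane $N \subset \bbP^4$ such that (i) $N$ avoids the finite set $\mathrm{Sing}(X_1)$ and (ii) $X_1 \cap N$ is smooth: then choosing coordinates with $N = \{x_0 = 0\}$ and $p = [1:0:\ldots:0]$ puts $X_1$ in the form $F = x_0 f_2 + f_3$, with $f_3 = F|_N$ the equation of the smooth surface $X_1 \cap N \subset N \cong \bbP^3$. Condition (i) is open and nonempty (its complement is a finite union of hyperplanes in $(\bbP^4)^*$), and given (i), Bertini's theorem applied to the smooth quasi-projective variety $X_1 \setminus \mathrm{Sing}(X_1)$ shows that a generic such $N$ also satisfies (ii). Being both open and generic, the two conditions are compatible.

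The anticipated main obstacle is precisely this Bertini step. If $f_3$ could not be made smooth, the limit $X_0$ would be a cone over a singular cubic surface and its vertex singularity would be strictly worse than $O_{16}$, breaking the argument. Because the problem reduces to avoiding finitely many points plus a standard Bertini statement for hyperplane sections, however, this step is routine. The subsequent rescaling and its interpretation as a curve in the parameter space $\bbP^{34}$ of cubic threefolds connecting the $\Sigma$-stratum to the $O_{16}$-stratum are purely formal. Finally, the degenerate case $\Sigma = \{O_{16}\}$, when $X_1$ is already a cone over a smooth cubic surface, is trivial by taking $X_0 = X_1$.
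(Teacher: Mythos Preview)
Your argument is correct and follows the same degeneration idea as the paper: choose coordinates so that the hyperplane $\{x_0=0\}$ cuts $X$ in a smooth cubic surface, then rescale $x_0\mapsto t x_0$ and let $t\to 0$ to obtain the cone $\{f_3=0\}$. The only difference is that you additionally place a singular point of $X_1$ at $[1:0:\dots:0]$, which forces the coefficients of $x_0^3$ and $x_0^2$ in $F$ to vanish; the paper does not impose this and simply writes $F=cx_0^3+x_0^2f_1+x_0f_2+f_3$ before rescaling. Your extra normalisation is harmless but unnecessary, and dropping it has the minor advantage that the argument also covers the smooth case $\Sigma=\emptyset$ without a separate remark.
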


\begin{proof}
	Let $X$ be a cubic threefold with isolated singularities. Since a generic hyperplane section of $X$ is smooth, we can assume that the one given by $x_0=0$ is smooth and thus $X$ is determined by the equation
	$$
	F=cx_0^3+x_0^2f_1(x_1,x_2,x_3,x_4)+x_0f_2(x_1,x_2,x_3,x_4)+f_3(x_1,x_2,x_3,x_4).
	$$
	After scaling $x_0$ by $t$, we get
	$$
	F_t=ct^3x_0^3+t^2x_0^2f_1(x_1,x_2,x_3,x_4)+tx_0f_2(x_1,x_2,x_3,x_4)+f_3(x_1,x_2,x_3,x_4).
	$$
	If $t\neq 0$, then $X=V(F)\simeq V(F_t)$. When $t=0$, we get a cone over a smooth cubic surface $V(f_3)$.
\end{proof}

However, we encounter certain problems. First, currently there exists no complete classification of local deformations of $O_{16}$. Second, even if there was such a classification, we would not be able to apply Theorem~\ref{F:DPW} because the Milnor number of $O_{16}$ is 16 and thus the assumptions of the theorem are not necessarily satisfied. In the following sections, we find a short list of combinations of singularities that satisfy the assumptions of Theorem~\ref{F:DPW} and consists of well-studied singularities. The rest of the combinations can be obtained as local deformations of these maximal ones.

\begin{definition} \label{max_comb}
	We call a combination of singularities on a degree $d$ hypersurface \textit{maximal} in a certain class of combinations of singularities if we cannot get this combination of singularities by deforming a degree $d$ hypersurface with another combination of singularities from this class in $\bbP(V)$.
\end{definition}

\begin{example}
	The maximal combination of singularities on a cubic surface (among all of the combinations) is $\Esx$. It follows from \ref{BW} that the maximal $ADE$ combinations on cubic surfaces are $E_6$, $A_5+A_1$ and $3A_2$.
\end{example}

\subsection{The Milnor lattice of $O_{16}$}

While the results of this subsection are only used in the proof of Proposition \ref{A12} and can be replaced with a geometric argument there, we would still like to provide some lattice-theoretic background and examples since this kind of approach was effectively used in \cite{urabe}, \cite{Yang} and \cite{Stegmann} for studying classification problems similar to ours.

\medskip

Essentially, we would like to study singularities of cubic threefolds in terms of the Milnor lattice of~$O_{16}$:

\begin{proposition}[{\cite{LPZ}, Proposition 2.16}] \label{O16lattice}
	The stable Milnor lattice $T$ associated to a singularity of type $O_{16}$ is isometric to $D_4^{\oplus 3}\oplus U^{\oplus 2}$. In particular, the signature of $T$ is $(14,2)$
\end{proposition}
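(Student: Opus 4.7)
The plan is to pin down $T$ by computing three invariants — its rank, signature, and discriminant form — and then to invoke Nikulin's uniqueness theorem for indefinite even lattices whose rank exceeds the length of the discriminant group by at least two.

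For the rank, I would use the Milnor-number formula for an isolated weighted-homogeneous hypersurface singularity: since $O_{16}$ is defined by a smooth homogeneous cubic in $n=4$ variables of degree $d=3$, we have $\mu(O_{16}) = (d-1)^n = 2^4 = 16$, matching $\rank(D_4^{\oplus 3}\oplus U^{\oplus 2}) = 12+4$. Evenness of the stable form is immediate, since every vanishing cycle has self-intersection $2$ (Section~\ref{DynkinDiagrams}). For the signature, I would work with the Fermat representative $f = x_0^3+x_1^3+x_2^3+x_3^3$, since the stable Milnor lattice depends only on the singularity type. After the standard stabilization $\tilde f = f + z^2$, one obtains a Brieskorn-Pham singularity of type $(3,3,3,3,2)$, for which the signature of the Milnor form is computed by classical formulas of Brieskorn and Hirzebruch: one enumerates lattice points $(k_1,\ldots,k_5)$ with $0 < k_i < a_i$ and partitions them according to the class of $\sum k_i / a_i$ modulo $2\bbZ$. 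A direct enumeration for $(a_1,\ldots,a_5) = (3,3,3,3,2)$ yields signature $(14,2)$.

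To complete the identification, one must compare discriminant forms. The target $D_4^{\oplus 3}\oplus U^{\oplus 2}$ has discriminant group $(\bbZ/2)^6$ with an explicit $\bbQ/2\bbZ$-valued form (three copies of the discriminant form of $D_4$, with $U^{\oplus 2}$ contributing nothing since it is unimodular). The discriminant form of $T$ can be extracted either from an explicit Coxeter-Dynkin diagram for $\tilde f$ produced by Gabrielov's algorithm for Brieskorn-Pham singularities, or by Thom-Sebastiani applied to the building blocks $x^3$ (Milnor lattice $A_2$) and $z^2$ (Milnor lattice $A_1$); both approaches give the same $(\bbZ/2)^6$-form. Nikulin's theorem then furnishes the desired isometry.

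The main obstacle is the discriminant-form comparison: matching the full $\bbQ/2\bbZ$-valued quadratic form, rather than merely the underlying abelian group, is the most delicate step and requires careful bookkeeping in a distinguished basis. The rank, evenness, and signature are comparatively direct consequences of standard formulas for weighted-homogeneous singularities.
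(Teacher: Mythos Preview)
The paper does not prove this proposition; it is stated with attribution to \cite{LPZ}, Proposition~2.16, and no argument is supplied in the present text. There is therefore no in-paper proof to compare your proposal against.

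That said, your strategy is sound and would yield a valid proof. The rank and evenness are immediate, and the Brieskorn--Hirzebruch signature count for the stabilized Fermat representative $x_1^3+x_2^3+x_3^3+x_4^3+z^2$ with exponents $(3,3,3,3,2)$ does give $(14,2)$: of the sixteen tuples $(k_1,\ldots,k_5)$ with $0<k_i<a_i$, exactly two (namely $(1,1,1,1,1)$ and $(2,2,2,2,1)$) have $\lfloor\sum k_i/a_i\rfloor$ odd. Nikulin's uniqueness theorem then applies since $T$ is even, indefinite, and $\rank T=16\geq \ell(A_T)+2=8$. The one step requiring genuine care is, as you rightly flag, matching the full $\bbQ/2\bbZ$-valued quadratic form on $(\bbZ/2)^6$ with $q_{D_4}^{\oplus 3}$ rather than merely identifying the underlying group. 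Gabrielov's algorithm for the Fermat Brieskorn--Pham form will do this, though the bookkeeping is nontrivial. Your Thom--Sebastiani alternative is less direct than the sketch suggests: the theorem identifies the Milnor \emph{module} of $f\oplus g$ with a tensor product, but the intersection form is recovered only via the Seifert forms of the summands together with a sign twist, so extracting the discriminant form that way requires an extra layer of computation.
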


In general, there is a relation between the Milnor lattice of a singularity and the Milnor lattice of the singularities it is adjacent to:

\begin{theorem}[{\cite{loo}, Proposition 7.13}] \label{Loo}
	If an isolated singularity of type $L$ is adjacent to a combination of singularities $K_1+\ldots+K_m$, then there is a natural embedding of lattices $\iota: M_{K_1}\oplus\ldots\oplus M_{K_m}\hookrightarrow M_L$ where $M_{K_i}$ and $M_L$ are the Milnor lattices of the corresponding singularities.
\end{theorem}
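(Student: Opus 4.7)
The plan is to realize the embedding geometrically by a Morsification-type argument inside the Milnor fibration. Pick a representative $f:(\bbC^n,0)\to(\bbC,0)$ of the singularity of type $L$ on a Milnor ball $B$. By the definition of adjacency, there exists a one-parameter deformation $f_s$ of $f=f_0$ such that, for $0<|s|\ll 1$, the restriction $f_s|_B$ has isolated critical points $p_1(s),\ldots,p_m(s)$ of types $K_1,\ldots,K_m$, contained in pairwise disjoint sub-balls $B_1,\ldots,B_m\subset B$ on each of which $f_s$ is a Milnor representative of the corresponding $K_i$.

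For a suitable regular value $t$ of $f_s$ the global fiber $X_{s,t}:=f_s^{-1}(t)\cap B$ is diffeomorphic to the Milnor fiber $X_t=f^{-1}(t)\cap B$ of $L$, the diffeomorphism being supplied by an Ehresmann-type trivialization on the smooth part of the total space of the family; I use it to identify $H_{n-1}(X_{s,t},\bbZ)$ with $M_L$. Each local Milnor fiber $F_i:=f_s^{-1}(t)\cap B_i$ is a compact manifold with boundary inside $X_{s,t}$ whose middle homology is canonically $M_{K_i}$. The inclusions $F_i\hookrightarrow X_{s,t}$ combine to give the desired homomorphism
$$
\iota:M_{K_1}\oplus\cdots\oplus M_{K_m}\longrightarrow M_L.
$$

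To see that $\iota$ is an embedding of lattices, I would verify both compatibility with the intersection forms and injectivity. Compatibility is immediate: cycles supported in $F_i$ and $F_j$ for $i\neq j$ are contained in disjoint sub-balls and therefore pair trivially, while the intersection number of two cycles inside the same $F_i$ may be computed locally in $B_i$, where by construction it agrees with the form on $M_{K_i}$. Injectivity follows from the classical correspondence between Morsifications of $f$ and distinguished bases of vanishing cycles (see \cite{AGLV}, \cite{Ebeling2019}): one chooses distinguished bases of vanishing cycles in each $F_i$ with respect to a system of non-crossing paths in the $t$-plane joining the critical values $f_s(p_i)$ to the base point; their concatenation is a distinguished system in $H_{n-1}(X_{s,t},\bbZ)$, hence $\bbZ$-linearly independent, and moreover extends to a full distinguished basis of $M_L$.

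The main obstacle is the calibration of the geometric setup: one must first fix the Milnor ball $B$ for $L$, then pick $s$ small enough and sub-balls $B_i$ small enough so that each $f_s|_{B_i}$ really is a Milnor representative of $K_i$, and finally choose $t$ so that $X_{s,t}$ is genuinely diffeomorphic to $X_t$. Getting these three scales in the right order is exactly what makes both the source (as $\bigoplus M_{K_i}$) and the target (as $M_L$) of $\iota$ intrinsically well-defined; once this is in place, the lattice-theoretic assertions reduce to the standard local-to-global properties of vanishing cycles recalled in Section \ref{DynkinDiagrams}.
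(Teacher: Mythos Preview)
The paper does not give its own proof of this theorem: it is simply quoted from Looijenga (\cite{loo}, Proposition~7.13) and used as a black box in the lattice-theoretic arguments that follow. So there is nothing in the paper to compare your argument against.

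That said, your sketch is correct and is essentially the standard argument one finds in the literature (and in particular in the cited source). The key geometric input---that a small perturbation $f_s$ of $f$ has Milnor fiber diffeomorphic to that of $f$, and that the local Milnor fibers over disjoint sub-balls sit disjointly inside it---is exactly what produces the map $\iota$; your justification of injectivity via the concatenation of local distinguished systems of vanishing paths into a global distinguished system is the right one and is precisely how one shows $\iota(\bigoplus M_{K_i})$ sits primitively as a sublattice generated by part of a distinguished basis of $M_L$. Your closing paragraph about the order in which the scales $B$, $s$, $B_i$, $t$ must be chosen is also accurate and is the only genuinely delicate point in making the argument rigorous. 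One small remark: since the paper works with the \emph{stable} Milnor lattice (Section~\ref{DynkinDiagrams}), you should either run the argument after stabilizing to $n\equiv 1\pmod 4$, or note that stabilization is functorial for the inclusions you construct; either way this is routine.
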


\begin{corollary} \label{ADE_bound}
	A cubic threefold cannot have a combination of $ADE$ singularities with total Milnor number greater or equal to 15.
\end{corollary}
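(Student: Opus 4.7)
The plan is to combine the three results the subsection has just assembled: the adjacency of $O_{16}$ to every configuration occurring on a cubic threefold (Proposition~\ref{O16adj}), Looijenga's lattice embedding (Theorem~\ref{Loo}), and the explicit description of the Milnor lattice $M_{O_{16}}\simeq D_4^{\oplus 3}\oplus U^{\oplus 2}$ of signature $(14,2)$ (Proposition~\ref{O16lattice}). The whole argument is a signature-count.

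First I would fix a cubic threefold $X$ carrying an $ADE$ configuration $K_1,\dots,K_m$ with total Milnor number $\mu=\sum\mu(K_i)$, and apply Proposition~\ref{O16adj} to produce a cubic threefold $X_0$ with an $O_{16}$ singularity which degenerates to $X$ inside the family of cubic threefolds. This exhibits an adjacency of the $O_{16}$ singularity on $X_0$ to the combination $K_1+\dots+K_m$, so Theorem~\ref{Loo} gives an embedding of Milnor lattices
\[
M_{K_1}\oplus\cdots\oplus M_{K_m}\;\hookrightarrow\;M_{O_{16}}\simeq D_4^{\oplus 3}\oplus U^{\oplus 2}.
\]

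Next I would observe that, under the stable convention of Section~\ref{DynkinDiagrams} (the one the paper has adopted, with self-intersection $+2$ of a vanishing cycle), the Milnor lattices of simple singularities are the standard positive-definite $ADE$ root lattices. Hence the left-hand side is positive definite of rank exactly $\mu$. Since a positive-definite sublattice of a lattice of signature $(14,2)$ cannot exceed rank $14$, the embedding forces $\mu\leq 14$, contradicting $\mu\geq 15$.

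The only point that requires a moment of care is matching sign conventions so that both sides are compared in the same stable intersection form; once that is done the inequality is immediate, and there is no serious obstacle. If one preferred to avoid lattice theory entirely, the same bound could be obtained by the geometric degeneration argument alluded to in the subsection's opening remark (contract $X$ to the cone $X_0$ and use that the link of $O_{16}$ has second Betti number at most $14$), but the signature argument above is shorter and uses exactly the tools just introduced.
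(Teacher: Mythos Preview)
Your proof is correct and follows the same route as the paper: both use the adjacency to $O_{16}$ together with Theorem~\ref{Loo} and the signature $(14,2)$ of $M_{O_{16}}$ from Proposition~\ref{O16lattice} to rule out a positive-definite sublattice of rank $\ge 15$. You are in fact slightly more explicit, invoking Proposition~\ref{O16adj} directly (the paper leaves this implicit) and phrasing the bound as a single ``positive-definite sublattice has rank $\le 14$'' rather than splitting into the cases $\mu\in\{15,16\}$ (signature) and $\mu>16$ (rank).
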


\begin{proof}
	Let $K_1,\ldots,K_m$ be the singularities with the sum of Milnor numbers equal to $\mu$. If $\mu=15$ or 16, then there is no embedding $\iota: M_{K_1}\oplus\ldots\oplus M_{K_m}\hookrightarrow T$ because $M_{K_1}\oplus\ldots\oplus M_{K_m}$ is positive definite, and $T$ has signature $(14,2)$ by Proposition \ref{O16lattice}. If $\mu>16$, then there is no embedding for dimension reasons. Thus the statement follows from Theorem \ref{Loo} above.
\end{proof}

\begin{example}
	Let $R$ be the $A_{12}$ lattice. In this case, we will show that there are no embeddings $R\hookrightarrow T$. If there is such an embedding $\iota$, then $\iota(A_{12})$ coincides with its saturation in $T$ by Corollary \ref{Sat(A_n)}. The discriminant group of $A_{12}$ is $\bbZ/13\bbZ$, the discriminant of $T$ is $(\bbZ/2\bbZ)^{\oplus 6}$ (see Example \ref{DiscrExamples}). Let $S=\iota(A_{12})^{\perp}$. Since $\bbZ/13\bbZ$ does not contain any copies of $\bbZ/2\bbZ$, the discriminant $D_S$ of $S$ should contain $(\bbZ/2\bbZ)^{\oplus 6}$ by Corollary \ref{EmbeddingDiscr}. However, $\dim S=4$ and thus $D_S$ cannot have more than 4 generators. Contradiction. Theorem \ref{Loo} now implies that $A_{12}$ singularities are not possible on cubic threefolds which agrees with the statement of Theorem \ref{Theorem1}. 
\end{example}

\begin{example}
	Let $R$ be the $D_4^{\oplus 3}\oplus A_1$ lattice. It can clearly be embedded into $T=D_4^{\oplus 3}\oplus U^{\oplus 2}$, but a $3D_4+A_1$ combination cannot appear on a cubic threefold by Theorem~\ref{Theorem1}. Thus we see that the converse of Theorem \ref{Loo} does not hold in this setting. We believe that the relation between possible embeddings of lattices into $T$ and singularities of cubic threefolds can be further explored using the results of \cite{LPZ} which link $T$ to the study of cubic fourfolds with Eckardt points.
\end{example}

\section{Singularities of corank 3} \label{sect_cr3}

Let $X\subset\bbP^4$ be a cubic threefold with a singular point $p=[1:0:0:0:0]$ of corank 3. Let $Q$ and $S$ be as in Section \ref{notation}. In this case, $Q\subset\bbP^3$ is a double plane and can be defined by the equation $x_1^2=0$. Let $C$ be the intersection of $S$ and the plane $x_1=0$.

\medskip

We will use the geometry of $C$ to describe possible singularities on $X$. The plane cubic $C$ is one of the following: three concurrent lines, a conic and its tangent, a triangle, a cuspidal cubic, a conic and its secant, a nodal cubic or a smooth cubic. Notice that $C$ cannot contain a double line, because otherwise $Sing(Q)\cap Sing(S)=Q\cap Sing(S)\neq\O$ and $p$ is not isolated by \ref{projthm}.

\begin{proposition}
	If $C$ is the union of three concurrent lines, then $p$ is a $U_{12}$ singularity.
\end{proposition}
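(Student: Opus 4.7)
The plan is to reduce the local equation of $X$ at $p$ to Arnold's normal form for $U_{12}$ by applying the splitting lemma in the $x_1$-direction and analyzing the resulting three-variable germ. In the affine chart $\{x_0=1\}$ the local equation reads $F(x_1,\dots,x_4) = x_1^2 + f_3(x_1,x_2,x_3,x_4)$, and one decomposes
\[
f_3 = c x_1^3 + x_1^2 \ell(x_2,x_3,x_4) + x_1\, q(x_2,x_3,x_4) + h_3(x_2,x_3,x_4),
\]
where $\ell,q,h_3$ are forms of degrees $1,2,3$ respectively and $h_3$ is the defining equation of the plane cubic $C$ in $\{x_1=0\}$.

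Since the $2$-jet $x_1^2$ is non-degenerate in the $x_1$-direction, the splitting lemma provides holomorphic coordinates $(y_1,\dots,y_4)$ in which $F = y_1^2 + g(y_2,y_3,y_4)$, with $g$ of order at least $3$, and the $3$-jet of $g$ equals $h_3$. Because $C$ consists of three distinct concurrent lines, I would move the point of concurrence to $[0:0:1]$ by a linear change in $x_2,x_3,x_4$; then $h_3$ depends only on $x_2,x_3$ and factors as a product of three distinct linear forms. Over $\bbC$, a further linear change normalizes this to $h_3 = y_2^3 + y_3^3$, matching the $3$-jet of the Arnold normal form $y_2^3 + y_3^3 + y_4^4$ of $U_{12}$.

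To identify $g$ beyond its $3$-jet, I would compute it to order $4$ via the implicit description of the splitting (the Morse variable solves $\partial F/\partial x_1 = 0$, so $x_1 \approx -q/2$), obtaining
\[
g = h_3(y_2,y_3) - \tfrac{1}{4}\,q(y_2,y_3,y_4)^2 + (\text{terms of order} \geq 5).
\]
The hypothesis that $p$ is isolated then forces the coefficient $\alpha$ of $y_4^2$ in $q$ to be nonzero: if $\alpha = 0$ then $q$ lies in the ideal $(y_2,y_3)$, and a direct calculation with $h_3 = y_2^3 + y_3^3$ shows that all four partials $\partial F/\partial y_i$ vanish identically along the line $\{y_1=y_2=y_3=0\}$, contradicting isolation.

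With $\alpha \neq 0$, after rescaling $y_4$ one obtains
\[
g = y_2^3 + y_3^3 + y_4^4 + (\text{terms of weighted degree} > 1 \text{ for the weights } (\tfrac{1}{3},\tfrac{1}{3},\tfrac{1}{4})),
\]
a semi-quasi-homogeneous perturbation of the Brieskorn singularity of type $(3,3,4)$, whose leading quasi-homogeneous part has Milnor number $(3-1)(3-1)(4-1) = 12$. By the classification of semi-quasi-homogeneous germs (\cite{AGLV}, Ch.~15), $g$ is right-equivalent to the $U_{12}$ normal form $y_2^3 + y_3^3 + y_4^4 + a\, y_2 y_3 y_4^2$ for some value of the modulus $a$. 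Since $F$ is stably equivalent to $g$ (Definition~\ref{stable}), the singularity of $X$ at $p$ is $U_{12}$. The main obstacle is the isolation argument forcing $\alpha \neq 0$; the remaining identification is then a direct application of Arnold's classification.
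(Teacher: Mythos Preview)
Your proof is correct and follows essentially the same route as the paper: split off the rank-one quadratic part in the $x_1$-direction, use isolation of $p$ to force the coefficient of $x_4^2$ in $q$ (the paper's $b$, your $\alpha$) to be nonzero, and then invoke Arnold's classification to recognize $U_{12}$. The only differences are cosmetic: the paper normalizes three concurrent lines as $x_2^3+x_2x_3^2$ and carries out the splitting via explicit coordinate changes before citing specific cases ($83_1$, $84_1$) of Arnold's determinator, whereas you use the equivalent normalization $y_2^3+y_3^3$, invoke the splitting lemma directly, and phrase the final identification in the language of semi-quasi-homogeneous germs; likewise, the paper deduces $b\neq 0$ geometrically via $Sing(Q)\cap Sing(S)$ and Theorem~\ref{projthm}, while you verify it by checking the partials along the $y_4$-axis.
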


\begin{proof}
	In the affine chart given by $x_0\neq 0$, $X$ is defined by the equation $x_1^2+f_3(x_1,x_2,x_3,x_4)=0$. Let $h_3(x_2,x_3,x_4)=f_3(0,x_2,x_3,x_4)$. Since $C$ is the union of three concurrent lines, we can choose coordinates in which $h_3(x_2,x_3,x_4)=x_2^3+x_2x_3^2$. Then 
	$$
	f_3(x_1,x_2,x_3,x_4)=x_2^3+x_2x_3^2+x_1h_2(x_2,x_3,x_4)+x_1^2h_1(x_2,x_3,x_4)+cx_1^3
	$$
	where $h_1$ and $h_2$ are homogeneous polynomials of degree 1 and 2 respectively and $c\in\bbC$. We have
	$$
	df_2=2x_1dx_1,
	$$
	$$
	df_3=(3x_2^2+x_3^2)dx_2+2x_2x_3dx_3+h_2dx_1+x_1dh_2+2x_1h_1dx_1+x_1^2dh_1+3cx_1^2dx_1.
	$$
	If $df_2(x_1,x_2,x_3,x_4)=0$, then $x_1=0$; if $df_3(0,x_2,x_3,x_4)=0$, then $x_2=x_3=0$. We define $b\in\bbC$ as follows:
	$$
	df_3(0,0,0,x_4)=h_2(0,0,x_4)dx_1=bx_4^2dx_1.
	$$
	In particular, $b\neq 0$, otherwise $[0:0:0:1]$ is a singular point on both $Q$ and $S$ and $p$ is not isolated by \ref{projthm}.
	
	\medskip
	
	We will use theorems from \cite{Arnold2012} to determine the type of singularity at $p$. In order to be able to apply them, we will do a sequence of coordinate changes. First, we set
	$$
	x_1=y_1-\frac{1}{2}h_2-\frac{1}{2}y_1h_1
	$$
	which gives
	$$
	f_2+f_3=y_1^2+x_2^3+x_2x_3^2-\frac{1}{4}h_2^2-y_1h_1h_2-\frac{3}{4}y_1^2h_1^2+O_5(x_1,x_2,x_3,x_4)
	$$
	where $O_k$ stands for any series with terms of degree $k$ and higher. Next, we can find a series $\psi_3$ with terms of degree 3 and higher so that if $y_1=z_1+\psi_3$, then
	$$
	f_2+f_3=z_1^2+x_2^3+x_2x_3^2-\frac{1}{4}h_2^2+O_5(x_2,x_3,x_4).
	$$
	Finally, we set $x_2=z_2$, $x_3=z_3$, $x_4=\frac{2\sqrt{-1}}{b}z_4$ and get
	$$
	f_2+f_3=z_1^2+z_2^3+z_2z_3^2+z_4^4+z_4^3g_1(z_2,z_3)+z_4^2g_2(z_2,z_3)+z_4g_3(z_2,z_3)+g_4(z_2,z_3)+O_5(z_2,z_3,z_4)
	$$
	where each $g_i$ is a homogeneous polynomial of degree $i$.
	
	\medskip
	
	Now we will use the determinator of singularities from Chapter 16 of \cite{Arnold2012}. The third jet of $f_2+f_3$ equals $z_2^3+z_2z_3^2$ which leads us to case $83_1$ of the determinator. The quasijet $j^*_{z_4^4}(f_2+f_3)$ equals $z_2^3+z_2z_3^2+z_4^4$ where $j^*_{z_4^4}(z_2^{m_2}z_3^{m_3}z_4^{m_4})$ is defined to be $z_2^{m_2}z_3^{m_3}z_4^{m_4}$ if $\frac{m_2}{3}+\frac{m_3}{3}+\frac{m_4}{4}\leq 1$ and 0 otherwise. Thus we are directed to case $84_1$ which means that $p$ is a $U_{12}$ singularity.
\end{proof}

\begin{proposition}
	A singular point $p\in X$ as above has one of the following types: $U_{12}$, $S_{11}$, $Q_{10}$, $T_{444}$, $T_{344}$, $T_{334}$ or $P_8$. Furthermore $p$ is the only singularity of $X$.
\end{proposition}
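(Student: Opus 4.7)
The proof splits into two parts: (1) showing that $p$ is the only singularity of $X$, and (2) classifying the analytic type at $p$ according to the geometric type of the plane cubic $C$.

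For (1), note that $Q = V(x_1^2)$ is a double plane, hence singular at every one of its points, and in particular at every $q \in C$. By parts (b)--(c) of the proposition opening Section~\ref{projection}, any singular point of $X$ distinct from $p$ must lie on a line $\langle p, q \rangle \subset X$ with $q \in C$. Applying case (ii) of the projection theorem (Theorem~\ref{projthm}) at such a $q$ then yields that the only singularity of $X$ on this line is $p$ itself. Hence $p$ is the unique singularity of $X$.

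For (2), I would follow the template of the preceding $U_{12}$ case. Each of the six remaining types of plane cubic $C$ — smooth, nodal, cuspidal, conic plus secant, conic plus tangent, triangle — determines a normal form for the restriction $h_3 = f_3(0, x_2, x_3, x_4)$. Starting from this normal form, I would perform a substitution in $x_1$ of the shape $x_1 = y_1 - \tfrac{1}{2} h_2 - \tfrac{1}{2} y_1 h_1$ (followed by a correction of higher order) to kill the cross terms $x_1 h_2 + x_1^2 h_1 + c x_1^3$, then normalize the remaining coordinates and invoke the determinator of singularities from Chapter 16 of \cite{Arnold2012}. The expected correspondence is
$$
\begin{array}{lll}
\text{smooth} \to P_8, & \text{nodal} \to T_{334}, & \text{conic}+\text{secant} \to T_{344}, \\
\text{cuspidal} \to Q_{10}, & \text{triangle} \to T_{444}, & \text{conic}+\text{tangent} \to S_{11},
\end{array}
$$
consistent with the identity $\mu(X,p) = 8 + \mu(C)$ in every case.

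The key subtlety is that whenever $C$ is singular, the unperturbed germ $x_1^2 + h_3$ is itself non-isolated, having a line of singularities above each node or cusp of $C$. Isolatedness of the singularity of $X$ at $p$ therefore forces non-vanishing conditions on $h_2$ at every singular point of $C$, generalizing the condition $b \neq 0$ from the three-concurrent-lines proof. After the substitution above, the term $-\tfrac{1}{4} h_2^2$ becomes the leading quartic perturbation that makes the germ isolated, and its specific form dictates which case of Arnold's determinator applies. The principal obstacle is the case-by-case tracking of the appropriate quasi-homogeneous weights, which is most delicate for $Q_{10}$ and $S_{11}$; when $C$ has several singular points (the triangle and conic-plus-secant cases), one must separately verify the non-degeneracy of $h_2$ at each of them.
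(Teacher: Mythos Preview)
Your argument for part (1) is correct and matches the paper's: since $Q$ is singular along the entire plane $x_1=0$, the smooth locus of $Q$ is empty, and by Theorem~\ref{projthm} there are no singularities of $X$ away from $p$.

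For part (2), your approach is correct but genuinely different from the paper's. You propose to repeat the normal-form calculation of the $U_{12}$ proposition six more times, once for each remaining isomorphism class of reduced plane cubic, carefully tracking the nondegeneracy conditions on $h_2$ and feeding the result into Arnold's determinator. This would work, and your identification of the correspondence (together with the observation $\mu(X,p)=8+\mu(C)$) is correct.

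The paper instead avoids these six computations by a deformation-plus-counting argument. Having already shown that $U_{12}$ occurs (the preceding proposition), it invokes the known adjacencies $U_{12}\to S_{11},\,Q_{10},\,T_{444},\,T_{344},\,T_{334},\,P_8$ from Table~\ref{unimodal} and applies Theorem~\ref{F:DPW} (du~Plessis--Wall) to conclude that each of these occurs on some cubic threefold with a corank~3 point. Since by Theorem~\ref{projthm} the analytic type at $p$ is determined by the diffeomorphism type of $C$, and there are exactly seven such types, the list is complete; the precise matching is then read off from how adjacencies of singularities mirror degenerations of plane cubics. Your approach is more self-contained and does not depend on the external adjacency table or the du~Plessis--Wall theorem, at the cost of six additional case analyses; the paper's approach is short but leans on more machinery.
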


\renewcommand{\arraystretch}{1.25}

\begin{table}[h]
	\begin{center}
		\begin{tabular}{| c | c | c | c |} \hline
			$\mu $&Singularity at $p$& Geometry of $C$& Singularities of $C$ \\ \hline \hline
			12 & $U_{12}$ & three concurrent lines & $D_4$ \\ \hline
			11 & $S_{11}$ & conic and tangent & $A_3$ \\ \hline
			11 & $T_{444}$ & triangle & $3A_1$ \\ \hline
			10 & $Q_{10}$ & cuspidal cubic & $A_2$ \\ \hline
			10 & $T_{344}$ &conic and secant& $2A_1$ \\ \hline
			9 & $T_{334}$ & nodal cubic & $A_1$ \\ \hline
			8 & $P_8=T_{333}$ & smooth cubic & - \\ \hline
		\end{tabular}
		\bigskip
		\caption{Singularities of corank 3}
		\label{crk3}
		\vspace*{-1em}
	\end{center}
\end{table}

\renewcommand{\arraystretch}{1}

\begin{proof}
	On the one hand, a $U_{12}$ singularity can be deformed to $S_{11}$, $Q_{10}$, $T_{444}$, $T_{344}$, $T_{334}$ and $P_8$ (see Table \ref{unimodal}), and thus these singularities can appear on a cubic threefold by Theorem \ref{F:DPW}. On the other hand, there are 7 possible diffeomorphism types of $C$ which means that there are at most 7 possible singularity types of $p$ by Theorem \ref{projthm}. Thus $U_{12}$, $S_{11}$, $Q_{10}$, $T_{444}$, $T_{344}$, $T_{334}$ and $P_8$ are all of the possibilities.
	
	\smallskip
	
	The correspondence between singularity types of $p$ and diffeomorphism types of $C$ is shown on Table \ref{crk3}. It can be established by comparing the adjacencies from Table \ref{unimodal} and the way different types of plane cubics deform to each other.
	
	\smallskip
	
	By Theorem \ref{projthm}, singularities of $X$ other than $p$ correspond to singularities of $Q\cap S$ contained in the smooth locus of $Q$ which is empty in the corank 3 case.
\end{proof}

\section{Singularities of corank 2} \label{sect_cr2}

Let $X\subset\bbP^4$ be a cubic threefold with a singular point $p\in X$ of corank 2. Let $Q$, $S$, $C$ be as in Section \ref{notation}. Since $p$ is of corank 2, $Q=P_1\cup P_2$ where $P_i$ are distinct planes. Let $L=Sing(Q)=P_1\cap P_2$.  By Theorem~\ref{projthm}, the singularities of $X$ away from $p$ correspond to the singularities of $C$ away from $L$. The singularity type of $p$ depends on the singularities of $C$ along $L$.

\begin{claim} \label{crk2jet}
	The curve $C$ contains $L$ if and only if the third (stable) jet of $f_2+f_3$ vanishes.
\end{claim}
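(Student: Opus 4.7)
The plan is to choose coordinates in which $f_2 = x_3 x_4$ (possible because $Q = P_1 \cup P_2$ is a union of two distinct planes), so that $L = \{x_3 = x_4 = 0\}$ and the condition $L \subset C$ translates into an algebraic condition on $f_3$. Explicitly, since $L \subset Q$ automatically, $L \subset C$ if and only if $L \subset S$, i.e.\ if and only if
\[
h(x_1, x_2) := f_3(x_1, x_2, 0, 0) \equiv 0.
\]
In these coordinates the Hessian of $F := f_2 + f_3$ at the origin has rank $2$ with kernel spanned by $\partial_{x_1}, \partial_{x_2}$, so the analytic splitting lemma produces local coordinates $(x_1, x_2, u, v)$ in which $F = uv + g(x_1, x_2)$ with $g \in \mathfrak{m}^3$. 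As in the corank~$3$ computations of Section~\ref{sect_cr3}, ``the third jet of $f_2 + f_3$'' refers to the intrinsic invariant $j^3 g$, obtained after splitting off the non-degenerate quadratic part.

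To identify $j^3 g$ with $h$, I would carry out the splitting explicitly. Decompose $f_3 = h(x_1, x_2) + x_3 A + x_4 B$ with $A, B$ homogeneous of degree $2$, and set $u = x_3 + B$, $v = x_4 + A$; since $A, B$ vanish at the origin this is a valid local coordinate change. A direct computation gives
\[
F \;=\; uv + h(x_1, x_2) - A(x_1, x_2, x_3, x_4)\, B(x_1, x_2, x_3, x_4).
\]
Re-expressing $A, B$ in the new coordinates $(x_1, x_2, u, v)$ modifies each by terms of order $\ge 3$, so the product $AB$ remains of order $\ge 4$. Therefore $j^3 g = h(x_1, x_2)$, and the claim follows: $j^3(f_2 + f_3)$ vanishes if and only if $h \equiv 0$, if and only if $L \subset C$.

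The only delicate point is ensuring that the ``third jet'' is well-defined and really coincides with the restriction of $f_3$ to $L$ rather than with the whole polynomial $f_2 + f_3$; the splitting lemma guarantees intrinsicness, and the explicit coordinate change above does the identification by hand, bypassing any reliance on the abstract invariance statement.
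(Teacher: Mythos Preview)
Your argument is correct and follows the same idea as the paper's proof: normalize $f_2$ to a product of two linear forms, so that $L$ is cut out by those forms, and observe that the reduced third jet (in Arnold's sense, after splitting off the nondegenerate quadratic part) is exactly the restriction $f_3|_L$. The paper's version is considerably terser---it takes $f_2=x_1x_2$, so $L=\{x_1=x_2=0\}$, and simply asserts that the third jet vanishes iff $f_3\in(x_1,x_2)$ iff $L\subset C$; your explicit splitting-lemma computation is a faithful unpacking of that step but is not needed. Note also that your choice of coordinates ($f_2=x_3x_4$) is swapped relative to the convention used in the rest of Section~\ref{sect_cr2}.
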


\begin{proof}
	We can choose coordinates such that $Q$ is defined by the equation $x_1x_2=0$. Consider the third stable jet of $x_1x_2+f_3$, i.e. the third jet of a singularity in $\bbC^2$ stably equivalent to the one given by $x_1x_2+f_3$. It vanishes if and only if $f_3=x_1g_2(x_1,x_2,x_3,x_4)+x_2h_2(x_1,x_2,x_3,x_4)$ where $g_2$ are $h_2$ homogeneous polynomials of degree 2. The polynomial $f_3$ is of the form $x_1g_2+x_2h_2$ if and only if $L\subset C$.
\end{proof}

\subsection{Singularities with vanishing third jet}

By Claim \ref{crk2jet}, if the third jet of $f_2+f_3$ vanishes then $C=L\cup C_1\cup C_2$ where $C_1\subset P_1$ and $C_2\subset P_2$ are plane conics. We can choose coordinates in which $P_1$ and $P_2$ are defined by $x_1=0$ and $x_2=0$ respectively.

\begin{claim} \label{crk2doubleline}
	\hfill
	\begin{itemize}
		\item[a)] If $C_1$ or $C_2$ is a double line, then $X$ has a non-isolated singularity.
		\item[b)] If $C_1\cap C_2\neq\O$, then $X$ has a non-isolated singularity.
	\end{itemize}
\end{claim}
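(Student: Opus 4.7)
The plan for both parts is to work in coordinates where $Q = V(x_1 x_2) \subset N$, so $f_2 = x_1 x_2$, and (by the vanishing of the third jet established in Claim~\ref{crk2jet}) $f_3 = x_1 g_2 + x_2 h_2$ for some degree-two forms $g_2, h_2$ in $x_1, \ldots, x_4$. Then $C_1 = V(x_1) \cap V(h_2(0, x_2, x_3, x_4)) \subset P_1$, and symmetrically $C_2 \subset P_2$ is cut out by $g_2(x_1, 0, x_3, x_4)$. In each part the goal is to exhibit a positive-dimensional component of the singular locus of $X$.

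For part (b), pick $q \in C_1 \cap C_2$. Since $C_1 \cap C_2 \subseteq P_1 \cap P_2 = L = \mathrm{Sing}(Q)$, the quadric $Q$ is singular at $q$. By Theorem~\ref{projthm} it then suffices to check that $S$ is also singular at $q$ and conclude that $X$ is singular along the entire line $\langle p, q \rangle$. But $q$ has $x_1 = x_2 = 0$ and, as a point of $C_1 \cap C_2$, satisfies both $h_2(0,0,q_3,q_4) = 0$ and $g_2(0,0,q_3,q_4) = 0$; substituting these into the partials of $f_3 = x_1 g_2 + x_2 h_2$ shows every $\partial f_3/\partial x_i$ vanishes at $q$, as required.

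For part (a), assume $C_1$ is a double line (the case of $C_2$ is symmetric). After a linear change in $x_2, x_3, x_4$ we may write $h_2(0, x_2, x_3, x_4) = \ell(x_2, x_3, x_4)^2$ for some linear form $\ell$. Consider the $2$-plane $\Pi = V(x_1, \ell) \subset \mathbb{P}^4$. A direct substitution into $f = x_0 x_1 x_2 + x_1 g_2 + x_2 h_2$ shows that $f$ vanishes identically on $\Pi$, so $\Pi \subset X$. Computing the five partials of $f$ on $\Pi$, one finds that $\partial f/\partial x_0$ and $\partial f/\partial x_i$ for $i = 2,3,4$ all vanish on $\Pi$: the last three come out proportional to $\ell$ or $\ell^2$ using $h_2|_{x_1=0} = \ell^2$ (and hence $\partial h_2/\partial x_i|_{x_1=0} = 2\ell \, \partial \ell/\partial x_i$). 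The remaining partial $\partial f/\partial x_1$ restricts to a single equation on the $2$-plane $\Pi$, so its vanishing locus has dimension at least $1$; this locus lies in $\mathrm{Sing}(X)$, proving that $X$ has a non-isolated singularity.

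The main obstacle is merely the bookkeeping in part (a): one has to verify that $h_2|_{x_1=0} = \ell^2$ forces the $x_2$-, $x_3$-, and $x_4$-derivatives of $h_2|_{x_1=0}$ to vanish to first order along $\ell = 0$, so that after the additional factor of $x_2$ and the restriction to $\Pi$ each of these three partials vanishes identically. Once the pattern is isolated the computation is routine and part (b) is essentially a direct application of the projection theorem.
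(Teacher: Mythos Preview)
Your proof is correct and matches the paper's approach. Part (b) is identical (show $q \in \mathrm{Sing}(S) \cap \mathrm{Sing}(Q)$ and apply Theorem~\ref{projthm}); for part (a) the paper normalizes so that $\ell = x_3$ and $h_2 = x_3^2$ (absorbing the $x_1$-terms of $h_2$ into $g_2$) and then directly writes down the singular curve $K = V(x_1,\,x_3,\,x_0 x_2 + g_2)$, which is exactly the vanishing of your remaining partial $\partial f/\partial x_1$ on your plane $\Pi$---so the two arguments coincide up to this minor repackaging.
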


\begin{proof}
	\hfill
	\begin{itemize}
		\item[a)] Assume $C_1$ is a double line defined by $x_3^2=0$. Then $f_3=x_2x_3^2+x_1g_2(x_1,x_2,x_3,x_4)$ where $g_2$ is a homogeneous polynomial of degree 2. In the affine chart given by $x_0\neq 0$, $X$ is defined by the equation $f_2+f_3=x_1x_2+x_2x_3^2+x_1g_2=0$. We have
		$$
		d(f_2+f_3)=(x_2+g_2)dx_1+(x_1+x_3^2)dx_2+2x_2x_3dx_3+x_1dg_2.
		$$
		Both $f_2+f_3$ and $d(f_2+f_3)$ vanish along the curve $K\subset X\subset\bbP^4$ defined by $x_1=0$, $x_3=0$, $x_0x_2+g_2=0$. Thus the singular point $p\in K\subset X$ is not isolated.
		
		\smallskip
		
		\item[b)] Assume there exists a point $q\in C_1\cap C_2\subset L$. Then $f_3=x_1g_2(x_1,x_2,x_3,x_4)+x_2h_2(x_1,x_2,x_3,x_4)$ where $g_2$ and $h_2$ are homogeneous polynomials of degree 2 such that $g_2(q)=h_2(q)=0$. The differential $df_3=g_2dx_1+h_2dx_2+x_1dg_2+x_2dh_2$ vanishes at $q$ which means that $q\in Sing(S)$. Thus $q\in Sing(Q)\cap Sing(S)$ and the singularity at $p$ is not isolated by \ref{projthm}. \qedhere
	\end{itemize}
\end{proof}

There are 10 possible geometric configurations of $C$. Typical pictures are shown in Figure \ref{F:X9}. We will see that these pictures correspond to combinations of type $X_9+2A_1$ (on the left) and $X_9$ (on the right).

\medskip

\begin{figure}
	\begin{center}
		\includestandalone{X9}
		\vspace*{-1em}
		\caption{The curve $C$ for cubic threefolds with $X_9+2A_1$ and $X_9$ singularities}
		\label{F:X9}
	\end{center}
\end{figure}     

All of the configurations of $C$ together with the corresponding combinations of singularities are given schematically in Figure \ref{crk2j0}. This correspondence is going to be explained in Proposition \ref{crk2j0final}. Two configurations in Figure \ref{crk2j0} are connected by an arrow if one be can deformed to the other by a small perturbation.

\medskip

\begin{figure}[htb]
	\begin{center}
		\scalebox{.25}{\includegraphics{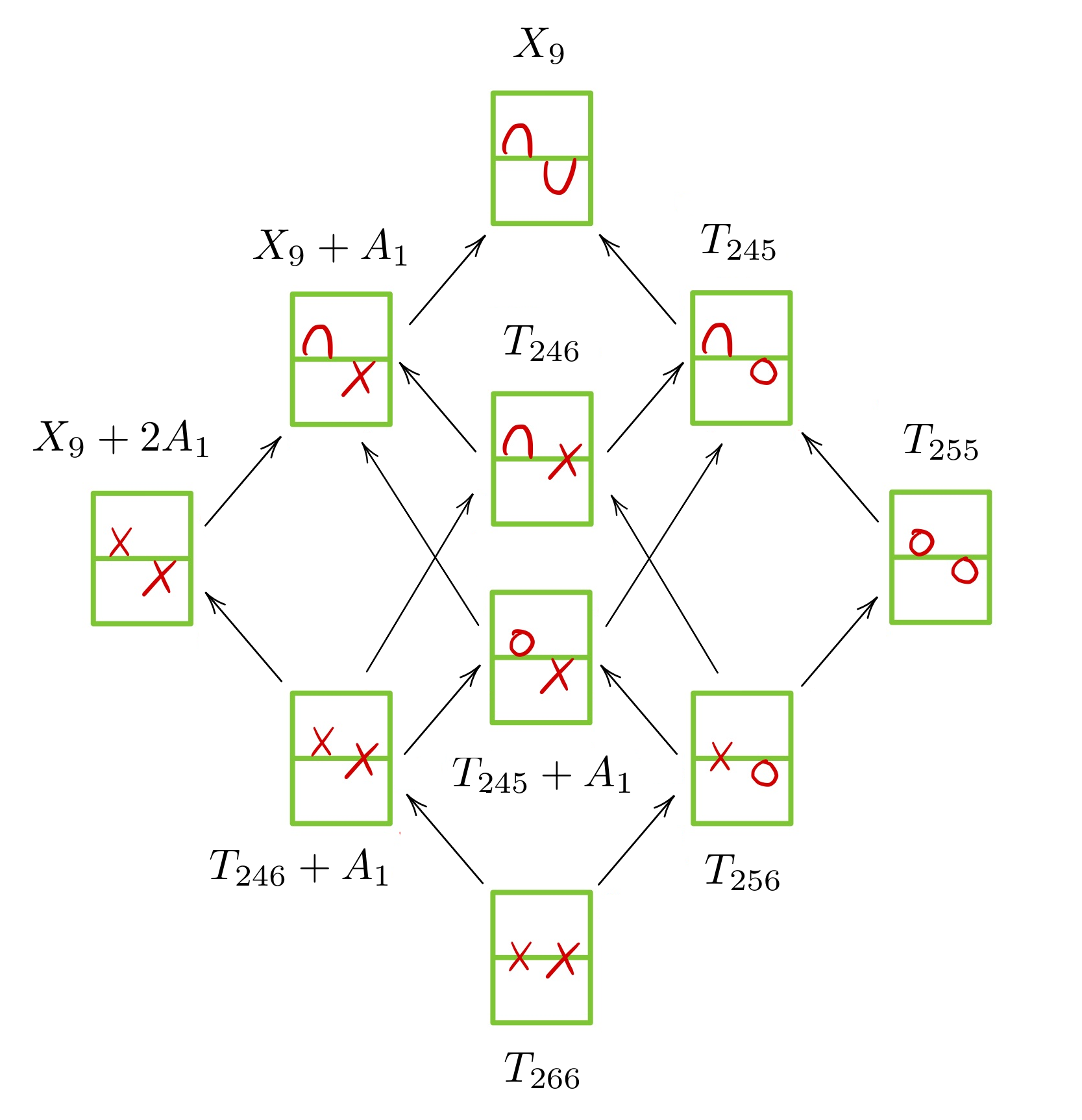}}
	\end{center}
	\caption{Adjancencies of corank 2 singularities with vanishing third jet}
	\label{crk2j0}
\end{figure}

We will first consider the case where each $C_i$ is a pair of distinct lines with the intersection point on $L$. We can choose coordinates such that $f_2=x_1x_2$, $f_3(0,x_2,x_3,x_4)=ax_2x_3(x_2-x_3)$ and $f_3(x_1,0,x_3,x_4)=bx_1x_4(x_1-x_4)$ where $a,b\in\bbC$, $ab\neq 0$. In these coordinates $f_3=ax_2x_3(x_2-x_3)+bx_1x_4(x_1-x_4)+x_1x_2g_1(x_1,x_2,x_3,x_4)$ where $g_1$ is a homogeneous polynomial of degree 1. We can get rid of the $x_1x_2g_1$ term by a linear coordinate change in $\bbP^5$  (see Remark \ref{SmodQ}). After another linear change we can assume that $a=b$. Thus we have $f_3=ax_2x_3(x_2-x_3)+ax_1x_4(x_1-x_4)$. In the affine chart given by $x_0\neq 0$, $X$ is defined by $x_1x_2+a(x_2^2x_3-x_2x_3^2+x_1^2x_4-x_1x_4^2)=0$. 

\begin{proposition} \label{t266}
	If $f_2+f_3=x_1x_2+a(x_2^2x_3-x_2x_3^2+x_1^2x_4-x_1x_4^2)$, then the singularity at $p$ is of type $T_{266}$. Moreover $p$ is the unique singularity of $X$.
\end{proposition}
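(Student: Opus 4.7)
The plan is twofold: identify the analytic type at $p$ as $T_{266}$ by a splitting-lemma reduction, and verify uniqueness with the projection theorem.

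For the type, the quadratic part $f_2 = x_1 x_2$ has rank two, so $f = f_2 + f_3$ has corank two at $p$. I would apply the splitting lemma by solving $\partial f/\partial x_1 = \partial f/\partial x_2 = 0$ via the implicit function theorem for $x_1, x_2$ as formal power series in $x_3, x_4$:
\[
x_1^* = a x_3^2 - 2 a^2 x_3 x_4^2 + \ldots, \qquad x_2^* = a x_4^2 - 2 a^2 x_3^2 x_4 + \ldots,
\]
so that $f$ is right-equivalent to $y_1 y_2 + g(x_3, x_4)$, where $g(x_3, x_4) = f(x_1^*, x_2^*, x_3, x_4)$. Expanding to degree six yields
\[
g = -a^2 x_3^2 x_4^2 + a^3(x_3^4 x_4 + x_3 x_4^4) - 4 a^4 x_3^3 x_4^3 + O(7).
\]

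I would next apply the shifts $x_4 \mapsto x_4 + \tfrac{a}{2} x_3^2$ and $x_3 \mapsto x_3 + \tfrac{a}{2} x_4^2$ to eliminate the degree-five terms $x_3^4 x_4$ and $x_3 x_4^4$ and produce a nonzero $\tfrac{a^4}{4}(x_3^6 + x_4^6)$ contribution. The remaining interior term proportional to $x_3^3 x_4^3$ lies in the Jacobian ideal of the leading form $-a^2 x_3^2 x_4^2$ and can be killed by a standard quasi-homogeneous adjustment, leaving a $6$-jet of the form $-a^2 x_3^2 x_4^2 + \tfrac{a^4}{4}(x_3^6 + x_4^6)$. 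After rescaling, this matches $y^6 + z^6 + \lambda y^2 z^2$ with $\lambda \ne 0$, which is the two-variable reduction of the $T_{266}$ normal form $x^2 + y^6 + z^6 + \mu xyz$ obtained by completing the square in $x$. Equivalently, one can run Arnold's determinator of singularities (Chapter~16 of \cite{Arnold2012}) as in the $U_{12}$ case: the $4$-jet $x_3^2 x_4^2$ routes to the $T_{2,q,r}$ branch, and the nonzero $x_3^6, x_4^6$ terms arising after the shifts fix $q = r = 6$.

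For the uniqueness, any singularity of $X$ other than $p$ would correspond, by Theorem~\ref{projthm}, to a singularity of $C$ in the smooth locus $Q \setminus L$. In our coordinates $C = L \cup C_1 \cup C_2$ with $C_1 \subset P_1 = \{x_1 = 0\}$ cut out by $x_3(x_2 - x_3)$ and $C_2 \subset P_2 = \{x_2 = 0\}$ cut out by $x_4(x_1 - x_4)$. Each $C_i$ is a pair of distinct lines meeting at a vertex on $L$: $v_1 = [0{:}0{:}0{:}0{:}1]$ and $v_2 = [0{:}0{:}0{:}1{:}0]$. Since $v_1 \ne v_2$, we have $C_1 \cap C_2 \subseteq \{v_1\} \cap \{v_2\} = \emptyset$, so $C$ is smooth at every point of $Q \setminus L$, confirming that $p$ is the unique singularity of $X$.

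The main obstacle will be the bookkeeping in the splitting step: one has to carry the expansions of $x_1^*, x_2^*$ and of $g$ through degree six with enough care to see the $x_3^6 + x_4^6$ contribution emerge with nonzero coefficient, ruling out a more degenerate $T_{2,q,r}$ with $q > 6$ or $r > 6$. As an independent sanity check, one may compute $\mu(f) = \dim_{\bbC} \calO_{\bbC^4, p}/J_f$ directly and compare with $\mu(T_{266}) = 2 + 6 + 6 - 1 = 13$.
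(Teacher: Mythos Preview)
Your proposal is correct and follows essentially the same route as the paper: reduce to a two-variable germ with $4$-jet $-a^2x_3^2x_4^2$, apply the shifts $x_3\mapsto x_3+\tfrac{a}{2}x_4^2$, $x_4\mapsto x_4+\tfrac{a}{2}x_3^2$ to expose nonzero $x_3^6,x_4^6$ coefficients, invoke Arnold's determinator (case 16) to conclude $T_{266}$, and handle uniqueness via Theorem~\ref{projthm}. The only cosmetic difference is that you invoke the splitting lemma to separate off $y_1y_2$ in one step, whereas the paper carries out the equivalent reduction through two explicit coordinate substitutions; this accounts for the discrepancy between your $-4a^4x_3^3x_4^3$ and the paper's $-8a^4x_3^3x_4^3$, but both reductions land on the same normal form after the shifts.
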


\begin{proof}
	We will do a sequence of coordinate changes. First we will set $x_1=y_1-ay_2x_3+ax_3^2$, $x_2=y_2-ay_1x_4+ax_4^2$.
	After some term cancellation the function $f_2+f_3$ has the form
	$$
	y_1y_2-a^2x_3^2x_4^2+a^3(x_3x_4^4+x_3^4x_4)+2a^2(y_1x_3^2x_4+y_2x_3x_4^2)-2a^3(y_1x_3x_4^3+y_2x_3^3x_4)-3y_1y_2x_3x_4+a^3(y_1^2x_3x_4^2+y_2^2x_3^2x_4).
	$$
	The next coordinate change is as follows:
	$$
	y_1=z_1-2a^2x_3x_4^2+2a^3x_3^3x_4+\frac{3}{2}z_1x_3x_4-a^3z_2x_3^2x_4,
	$$
	$$
	y_2=z_2-2a^2x_3^2x_4+2a^3x_3x_4^3+\frac{3}{2}z_2x_3x_4-a^3z_1x_3x_4^2;
	$$
	$$
	f_2+f_3=z_1z_2-a^2x_3^2x_4^2+a^3(x_3x_4^4+x_3^4x_4)-8a^4x_3^3x_4^3+z_1O_5+z_2O_5+O_7
	$$
	
	\smallskip
	
	\noindent where $O_k$ stands for any series with terms of degree $k$ and higher. Now we set
	$$
	x_3=y_3+\frac{a}{2}y_4^2,\;x_4=y_4+\frac{a}{2}y_3^2.
	$$
	We get
	$$
	f_2+f_3=z_1z_2-a^2(y_3^2y_4^2+\frac{a^2}{4}y_3^6+\frac{a^2}{4}y_4^6)+y_3^2y_4O_3+y_3y_4^2O_3+z_1O_5+z_2O_5+O_7.
	$$
	We can make a final coordinate change such that
	$$
	f_2+f_3=z_1z_2+bz_3^2z_4^2+z_3^6+z_4^6+O(7),\;b\neq 0
	$$
	which is a local description of a $T_{266}$ singularity (see case $16$ in Chapter 16 of \cite{Arnold2012}).
	
	\medskip
	
	By Theorem \ref{projthm}, singularities of $X$ other than $p$ correspond to singularities of $C$ contained in the smooth locus of $Q$. In our case $C$ is smooth outside of $Sing(Q)=L$.
\end{proof}

\begin{corollary} \label{t2pq}
	Any $T_{2pq}$ singularity with $4\leq p,q\leq 6$ can appear on a cubic threefold.
\end{corollary}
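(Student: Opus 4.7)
The plan is to combine Proposition~\ref{t266} with the versality criterion Theorem~\ref{F:DPW} and the standard adjacencies within the $T_{pqr}$ series, in the same spirit as the corank~3 argument (where $U_{12}$ was shown to deform to every other corank~3 singularity).

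First, I would invoke Proposition~\ref{t266} to produce a cubic threefold $X$ whose unique singularity is of type $T_{266}$. Since the Milnor number satisfies $\mu(T_{266}) = 2+6+6-1 = 13$, I get $\tau(X) \le \mu(X) = 13 < 16$, so the hypotheses of Theorem~\ref{F:DPW} are met and the family of cubic threefolds induces a simultaneous versal deformation of the $T_{266}$ singularity of $X$.

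Second, I would appeal to the classical adjacency $T_{pqr}\rightsquigarrow T_{p'q'r'}$ whenever $p'\le p$, $q'\le q$, $r'\le r$. This is immediate from the $\mu$-constant normal form $x^p + y^q + z^r + \lambda\, xyz$ by perturbing with monomials of lower degree in the appropriate variable (it is also recorded in Arnold's tables, cf.\ Chapter~15 of \cite{Arnold2012}). In particular, $T_{266}$ is adjacent to every $T_{2pq}$ with $4\le p,q\le 6$, namely to $T_{244}$, $T_{245}$, $T_{246}$, $T_{255}$, $T_{256}$, and $T_{266}$ itself. Combining this with the versality above, each such $T_{2pq}$ is realized as the singularity of some cubic threefold in a neighborhood of $X$ inside $\bbP(V)$, which is exactly the statement of the corollary.

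There is no real obstacle: the only nontrivial ingredient, versality, has already been established, and the $T$-series adjacencies are classical. As a sanity check, one could alternatively modify the explicit equation $x_1x_2 + a(x_2^2x_3 - x_2x_3^2 + x_1^2x_4 - x_1x_4^2)$ of Proposition~\ref{t266} so that the configuration of $C \subset P_1 \cup P_2$ specializes (e.g.\ arranging for $C_1$ or $C_2$ to become tangent to $L$), but the deformation-theoretic argument is cleaner and conceptually the same tool used throughout the paper.
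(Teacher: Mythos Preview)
Your argument is correct and matches the paper's proof exactly: Proposition~\ref{t266} produces a $T_{266}$ cubic threefold, Theorem~\ref{F:DPW} applies since $\mu(T_{266})=13<16$, and the adjacencies $T_{266}\to T_{2pq}$ (recorded in Table~\ref{unimodal}) yield all $T_{2pq}$ with $4\le p,q\le 6$. Your explicit verification of the Milnor number bound is a nice addition that the paper leaves implicit.
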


\begin{proof}
	A $T_{266}$ singularity is possible by Proposition \ref{t266} and is adjacent to $T_{2pq}$ singularities with $p,q\leq 6$ (see Table \ref{unimodal}). Thus a $T_{2pq}$ with $4\leq p,q\leq 6$ singularity can appear on a cubic threefold by Theorem \ref{F:DPW}.
\end{proof}

\begin{proposition} \label{crk2j0final}
	The possible combinations of singularities on a cubic threefold $X$ containing a singular point $p$ of corank 2 with vanishing third jet are $T_{266}$, $T_{256}$, $T_{246}+A_1$, $T_{245}+A_1$, $T_{255}$, $X_9+2A_1=T_{244}+2A_1$, $T_{246}$, $T_{245}$, $X_9+A_1$ and $X_9$. The adjacencies of these combinations are shown in Figure \ref{crk2j0}.
\end{proposition}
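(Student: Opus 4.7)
The plan is to enumerate the geometric configurations of $C = L \cup C_1 \cup C_2$ allowed by Claim \ref{crk2doubleline} and match each one with a unique combination of singularities on $X$. By Claim \ref{crk2doubleline}(a), each $C_i \subset P_i$ is a reduced plane conic, so belongs to one of four types: (a) smooth and meeting $L$ transversely in two distinct points, (b) smooth and tangent to $L$, (c) two distinct lines meeting at a point on $L$, or (d) two distinct lines meeting at a point off $L$. Claim \ref{crk2doubleline}(b) requires $C_1 \cap L$ and $C_2 \cap L$ to be disjoint, which is a generic condition and can be arranged in each case. Since swapping $P_1 \leftrightarrow P_2$ is a symmetry, this yields exactly ten unordered pairs of types, which I expect to put in bijection with the ten combinations listed in the statement.

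The extra singularities of $X$ away from $p$ are immediate from Theorem \ref{projthm}: they correspond to the singular points of $C$ in $Q \setminus L$, which are precisely the nodes of type-(d) conics. Each such node contributes an $A_1$ on $X$, so the extra contribution is $k A_1$ where $k \in \{0, 1, 2\}$ counts the type-(d) conics among $\{C_1, C_2\}$. This already recovers the pattern of extra $A_1$ multiplicities appearing in the list.

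For the singularity at $p$ itself, the base case where both $C_i$ are of type (c) is Proposition \ref{t266}, giving $T_{266}$. The remaining nine configurations are obtained by deforming one or both $C_i$ away from type (c): the specialization order on conic types has (c) at the bottom, (b) and (d) in the middle, and (a) at the top. The resulting deformation ladder of configurations is then to be compared to the adjacency ladder of the $T_{2pq}$ family (Table \ref{unimodal}). Combining Corollary \ref{t2pq}, which already realizes every $T_{2pq}$ with $4 \le p, q \le 6$ on a cubic threefold via Theorem \ref{F:DPW}, with upper semicontinuity of the singularity type, one recovers the expected pattern: types (a) and (d) contribute the index $4$, type (b) contributes $5$, and type (c) contributes $6$, producing exactly the ten combinations in the statement. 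The arrows of Figure \ref{crk2j0} are then read off the deformation relations among conic configurations.

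The main obstacle will be pinning down the exact $T_{2pq}$ in each configuration rather than merely some $T_{2pq}$ bounded above by $T_{266}$. The direct route is to run local-coordinate analyses for each of the nine remaining cases parallel to the proof of Proposition \ref{t266}, using the Arnold determinator from Chapter 16 of \cite{Arnold2012}; these computations are strictly easier than the $T_{266}$ case since the third jet no longer needs to be eliminated as completely. A cleaner route is to verify that the two posets mentioned above are isomorphic under the proposed matchup, so that the identification is forced by the ladder and semicontinuity, and essentially no further normal-form computation is required beyond the identification of the top (a,a) case with $X_9$ and the bottom (c,c) case with $T_{266}$.
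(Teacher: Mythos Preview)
Your overall strategy matches the paper's almost exactly: enumerate the ten unordered pairs of conic types, read off the extra $A_1$'s from Theorem~\ref{projthm}, anchor the bottom at $T_{266}$ via Proposition~\ref{t266}, and use Corollary~\ref{t2pq} together with semicontinuity to populate the rest. Your observation that types (a) and (d) look identical along $L$, hence produce the same singularity at $p$, is the right way to see why only six $T_{2pq}$ values appear.

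There is, however, a genuine gap in your ``cleaner route.'' The six-element adjacency poset of $T_{2pq}$ with $4\le p\le q\le 6$ has a nontrivial automorphism swapping $T_{246}$ and $T_{255}$: both are covered by $T_{256}$ and both cover $T_{245}$. The six-element poset of along-$L$ configurations has the matching automorphism swapping $(a,c)$ and $(b,b)$. So knowing only the top $(a,a)\leftrightarrow X_9$, the bottom $(c,c)\leftrightarrow T_{266}$, and that the bijection is order-preserving does \emph{not} force the identification; there remain two candidate assignments, and with them two candidate lists of ten combinations (one containing $T_{246}+A_1$, the other containing $T_{255}+A_1$). Nothing internal to the ladder breaks this symmetry.

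The paper resolves this by invoking an external input you did not use: Theorem~\ref{unipotent} (the du~Plessis--Wall classification of unipotent $1$-symmetric cubic threefolds) shows that $T_{246}+A_1$ actually occurs. Since the only configurations with a single extra $A_1$ are $(a,d),(b,d),(c,d)$, whose along-$L$ types are $(a,a),(a,b),(a,c)$, this forces $(a,c)\leftrightarrow T_{246}$ and hence $(b,b)\leftrightarrow T_{255}$. If you want to avoid quoting Theorem~\ref{unipotent}, your ``direct route'' is the correct fallback, but you only need a single normal-form computation (say for $(b,b)$, which is the easiest: two smooth conics tangent to $L$) rather than all nine.
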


\begin{proof}
	By Theorem \ref{projthm}, the geometry of $C$ determines the type of singularity at $p$. Thus we need to assign a combination of singularities to each configuration in Figure \ref{crk2j0}. The singularity at $p$ is unique if and only if the corresponding curve $C$ does not contain a pair of distinct lines with the intersection point outside of $L$, otherwise we get additional $A_1$ singularities. By Corollary \ref{t2pq}, the six configurations without such pairs of lines should correspond to $T_{2pq}$ with $4\leq p,q\leq 6$. Taking into account the fact that $T_{246}+A_1$ can appear on a cubic threefold by \ref{unipotent}, there is only one way to assign combinations of singularities to the configurations of $C$ in Figure \ref{crk2j0}.
\end{proof}

\subsection{Singularities with nonvanishing third jet} \label{ADEsection}

By Claim \ref{crk2jet}, if $p\in X$ has a nonvanishing third jet then $L\not\subset C$, $C=C_1\cup C_2$ where $C_1\subset P_1$ and $C_2\subset P_2$ are plane cubics. We can choose coordinates in which $P_1$ and $P_2$ are defined by $x_1=0$ and $x_2=0$ respectively.

\begin{claim}
	If $C_1$ or $C_2$ contains a double line, then $X$ has a non-isolated singularity.
\end{claim}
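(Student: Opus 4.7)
The plan is to mirror the strategy of Claim \ref{crk2doubleline}(a): exhibit a curve inside $Sing(X)$ by exploiting a squared linear factor in $f_3$. By the symmetry between $P_1$ and $P_2$, I may assume $C_1 \subset P_1$ contains a double line. Since $L \not\subset C$ by the nonvanishing-third-jet hypothesis, this double line is distinct from $L$, so I can choose coordinates in which $Q$ is cut out by $x_1 x_2 = 0$ (so $L = \{x_1 = x_2 = 0\}$) and the double component of $C_1$ is the line $\{x_1 = x_3 = 0\}$. Then $f_3(0, x_2, x_3, x_4)$ is divisible by $x_3^2$, so
$$
f_3 = x_3^2 \ell(x_2, x_3, x_4) + x_1 g(x_1, x_2, x_3, x_4)
$$
with $\ell$ linear and $g$ homogeneous of degree $2$.

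The key preparatory step is to normalize $g$. By Remark \ref{SmodQ}, $f_3$ is only defined modulo $f_2 = x_1 x_2$, so adding $x_1 x_2 \cdot h$ for a linear $h$ changes $g$ by $x_2 h$. Using this freedom I would arrange that $g$ depends only on $x_1, x_3, x_4$.

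With this normalization in hand, I would differentiate $f_2 + f_3 = x_1 x_2 + x_3^2 \ell + x_1 g$ in the affine chart $x_0 = 1$ and restrict to the plane $\Pi = \{x_1 = x_3 = 0\}$. A direct check will show that the partials with respect to $x_2, x_3, x_4$ all vanish identically on $\Pi$ (each surviving term in $f_3$ carries a factor of $x_1$ or $x_3$, and the $x_1 x_2$ term contributes only through $\partial_{x_1}$ and $\partial_{x_2}$), and that $f_2 + f_3$ itself vanishes on $\Pi$. The remaining partial equals $\partial_{x_1}(f_2 + f_3)|_\Pi = x_2 + g(0, 0, x_4)$, a polynomial in $x_2, x_4$ of degree at most $2$. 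Homogenizing, this cuts out a (possibly reducible) conic $K \subset \Pi \cong \bbP^2$ that contains $p = [1:0:0:0:0]$ and lies entirely in $Sing(X)$, so $p$ is not an isolated singularity.

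The main obstacle I anticipate is the normalization of $g$: once its $x_2$-dependence has been absorbed into the freedom of choosing $f_3$ modulo $Q$, the four partial derivatives collapse cleanly on $\Pi$ and pin down a one-dimensional singular locus. Without this simplification, the $x_2$-derivative would contribute extraneous terms and the argument would not close in such a compact form.
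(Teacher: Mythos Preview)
Your argument is correct and follows the same strategy as the paper (which simply refers back to Claim~\ref{crk2doubleline}). One small simplification: the normalization of $g$ via Remark~\ref{SmodQ} is unnecessary, since $\partial_{x_2}(f_2+f_3) = x_1 + x_3^2\,\partial_{x_2}\ell + x_1\,\partial_{x_2}g$ already vanishes on $\Pi=\{x_1=x_3=0\}$ thanks to the explicit $x_1$ and $x_3^2$ prefactors---and likewise for $\partial_{x_3}$ and $\partial_{x_4}$---so your ``main obstacle'' is not actually an obstacle, and the curve $K\subset\Pi$ is simply cut out by $x_0x_2 + g(0,x_2,0,x_4)=0$, exactly parallel to the paper's proof of Claim~\ref{crk2doubleline}(a).
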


\begin{proof}
	Analogous to Claim \ref{crk2doubleline}.
\end{proof}

\begin{claim}
	The intersection $C\cap L$ is either a triple point, a double point and a simple point or three simple points. In these cases, the third jet of $f_2+f_3$ is equal to $x_3^3$, $x_3^2x_4$ or $x_3^2x_4+x_4^3$ respectively.
\end{claim}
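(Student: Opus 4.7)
The plan is to identify the 3-jet of $f_2 + f_3$ (in the sense of Claim \ref{crk2jet}) with the binary cubic form cutting out $C \cap L$ on $L$, and then to classify that form up to a linear change of $(x_3, x_4)$.

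First, I would observe that in coordinates chosen so that $f_2 = x_1 x_2$ and $L = \{x_1 = x_2 = 0\}$, the scheme $C \cap L = S \cap L$ is the divisor on $L \cong \bbP^1$ defined by the binary cubic $\phi(x_3, x_4) := f_3(0, 0, x_3, x_4)$. Because the 3-jet is assumed non-vanishing, Claim \ref{crk2jet} gives $L \not\subset C$, so $\phi$ is not identically zero. As an effective divisor of degree $3$ on $\bbP^1$, it must therefore fall into one of the three stated combinatorial types: a triple point, a double point with a simple point, or three distinct simple points.

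Next, I would match $\phi$ with the quantity called the ``third jet of $f_2 + f_3$'' in the statement. As in the proof of Claim \ref{crk2jet}, this is the cubic form one recovers after splitting off the rank-$2$ Morse part $x_1 x_2$: writing
\[
f_3 = \phi(x_3, x_4) + x_1 A(x_3, x_4) + x_2 B(x_3, x_4) + (\text{terms in the ideal } (x_1, x_2)^2),
\]
the substitution $x_1 \mapsto u_1 - B(u_3, u_4)$, $x_2 \mapsto u_2 - A(u_3, u_4)$, $x_i \mapsto u_i$ for $i = 3,4$, kills the $u_1$-linear and $u_2$-linear contributions at order $3$, leaving $u_1 u_2 + \phi(u_3, u_4)$ modulo terms of degree $\geq 4$. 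Hence the $3$-jet in the residual variables equals $\phi$, up to a linear change in $(x_3, x_4)$.

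Finally, bringing $\phi$ to the stated normal forms is a classical fact about binary cubics: the group $GL_2(\bbC)$ acts on $(x_3, x_4)$ without affecting $f_2 = x_1 x_2$, and a nonzero binary cubic is $GL_2$-equivalent to $x_3^3$ if it has a triple root, to $x_3^2 x_4$ if it has a double and a simple root, and --- using $3$-transitivity of $PGL_2$ on $\bbP^1$ --- to any chosen cubic with three distinct roots otherwise. Placing the three distinct roots at $[0{:}1], [i{:}1], [-i{:}1]$ yields the representative $x_4(x_3 - ix_4)(x_3 + ix_4) = x_3^2 x_4 + x_4^3$. The main subtlety is really just the interpretation of ``third jet'' via the splitting lemma; once that is established, the rest of the argument is an elementary normalization of binary cubic forms.
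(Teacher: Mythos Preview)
Your argument is correct in outline and reaches the right conclusion, but one step is slightly overclaimed. After your substitution $x_1 \mapsto u_1 - B$, $x_2 \mapsto u_2 - A$, the cubic terms of $f_3$ lying in $(x_1,x_2)^2$ do \emph{not} become of degree $\geq 4$: a monomial such as $x_1^2 x_3$ still contributes $u_1^2 u_3$ at order $3$. What you actually obtain is $u_1 u_2 + \phi(u_3,u_4) + q(u_1,u_2,u_3,u_4) + O(4)$ with $q$ cubic and $q \in (u_1,u_2)^2$. This does not damage the conclusion: the remaining iterations of the splitting lemma remove $q$ via substitutions $u_i \mapsto u_i + (\text{order }\geq 2)$ for $i=1,2$, which leave $\phi(u_3,u_4)$ untouched, so the residual $3$-jet is indeed $\phi$. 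More directly, any coordinate change preserving the quadratic part $x_1 x_2$ must preserve its radical $\{x_1=x_2=0\}$, hence acts on it by some element of $GL_2$; restricting the degree-$3$ part to this plane therefore gives a $GL_2$-invariant of the germ, and that restriction is exactly $f_3(0,0,x_3,x_4)=\phi$.

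For comparison, the paper's own proof is the one-line citation ``Follows from case 3 in Chapter 16 of \cite{Arnold2012}'', i.e.\ the step in Arnold's determinator where, for a corank-$2$ germ, one splits off the nondegenerate quadratic and classifies the residual binary cubic into the three normal forms $x_3^3$, $x_3^2 x_4$, $x_3^2 x_4 + x_4^3$. Your argument is precisely an unpacking of that step, with the added benefit of making the geometric link to $C\cap L$ explicit rather than leaving it implicit in the citation.
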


\begin{proof}
	Follows from case 3 in Chapter 16 of \cite{Arnold2012}.
\end{proof}

We will consider these three cases separately starting with the triple point case.

\begin{claim} \label{crk2c1}
	Let $C\cap L$ be a triple point $q$. We can assume that $C_2$ is non-singular at $q$ and $C_1$ has $D_4$, $A_2$, $A_1$ or no singularity at $q$.
\end{claim}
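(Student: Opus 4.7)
The plan is to normalize coordinates at $q$, use the projection theorem to force the desired symmetry between $C_1$ and $C_2$, and then classify possible singularities of $C_1$ via B\'ezout. First I would choose coordinates so that $f_2=x_1x_2$ and $L=\{x_1=x_2=0\}$; since $C\cap L$ is a triple point, after a linear change in $(x_3,x_4)$ I may assume $f_3(0,0,x_3,x_4)=x_3^3$ and $q=[0:0:0:1]$. The hypothesis that the third jet does not vanish, combined with Claim \ref{crk2jet}, gives $L\not\subset C$, so neither $C_1$ nor $C_2$ contains $L$ as a component.

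The symmetry step goes via the projection theorem. Writing $f_3=\sum c_{ijkl}x_1^ix_2^jx_3^kx_4^l$, the normalization $f_3(0,0,x_3,x_4)=x_3^3$ forces $\partial f_3/\partial x_3(q)=\partial f_3/\partial x_4(q)=0$ automatically. In the affine chart $x_4=1$ on $P_1$ the curve $C_1$ is cut out by $g(x_2,x_3)=f_3(0,x_2,x_3,1)$, so $C_1$ is singular at $q$ iff $\partial f_3/\partial x_2(q)=0$; the analogous equivalence holds for $C_2$ with $\partial f_3/\partial x_1$. Hence simultaneous singularity of $C_1$ and $C_2$ at $q$ is equivalent to $q\in Sing(S)$, which together with $q\in L=Sing(Q)$ would force a line of singularities $\langle p,q\rangle\subset Sing(X)$ by Theorem \ref{projthm}, contradicting isolation of $p$. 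Thus after relabeling I may take $C_2$ smooth at $q$.

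Finally I would classify the singularity of $C_1$ at $q$ by the multiplicity $m=\mathrm{mult}_q C_1\in\{1,2,3\}$. Since $C_1\cap L\subset C\cap L=\{q\}$ and $L\not\subset C_1$, B\'ezout in $P_1\cong\bbP^2$ yields $(L\cdot C_1)_q=3$. If $m=1$, then $C_1$ is smooth and $L$ is its flex tangent. If $m=2$, the singularity is some $A_n$ and $L$ must be a tangent direction in order to give intersection multiplicity at least $3$; but for $n\geq3$ this multiplicity would be at least $n+1\geq4$, contradicting B\'ezout, so $n\in\{1,2\}$. If $m=3$, the local affine equation of $C_1$ at $q$ is homogeneous of degree three, so $C_1$ coincides with its tangent cone and is a union of three concurrent lines through $q$; the double-line exclusion from the preceding claim forces these three lines to be distinct, yielding a $D_4$. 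These cases exhaust the possibilities. The step I expect to be most delicate is the symmetry argument, where one must verify that after normalization the sole obstruction to $C_i$ being smooth at $q$ is the single partial $\partial f_3/\partial x_{3-i}(q)$; the B\'ezout classification at the end is essentially a routine inventory of plane-cubic singularities.
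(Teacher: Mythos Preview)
Your argument is correct. The symmetry step (at most one of $C_1,C_2$ can be singular at $q$, else $q\in Sing(S)\cap Sing(Q)$) is exactly what the paper does; you just make the computation of $df_3(q)$ explicit. The classification of the singularity of $C_1$ at $q$ is where you diverge slightly. The paper simply invokes the list of plane cubic singularities ($A_1$, $A_2$, $A_3$, $D_4$) and rules out $A_3$ by noting that a plane cubic with an $A_3$ point is a conic together with its tangent line, and the triple--point condition on $C\cap L$ forces that tangent line to coincide with $L$, contradicting $L\not\subset C_1$. You instead argue directly from B\'ezout: $(L\cdot C_1)_q=3$, and for an $A_n$ with $n\ge 3$ the unique tangent direction forces $(L\cdot C_1)_q=n+1\ge 4$. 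Your route is more self-contained (it does not presuppose the classification of cubic curve singularities or the geometric model of $A_3$), while the paper's is shorter once that background is granted. Both arguments are clean; the one point worth keeping an eye on in yours is the $m=2$, $n=1$ case, where $L$ is only one of two tangent directions---there the total $(L\cdot C_1)_q$ is $\ge 3$ and B\'ezout then pins it to exactly $3$, which you use implicitly.
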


\begin{proof}
	A cubic curve can only have $D_4$, $A_3$, $A_2$ and $A_1$ singularities. If $q\in C_1$ is an $A_3$ singularity then $C_1$ is a union of a conic and its tangent line. Since $q$ is a triple point, this tangent line should coincide with $L$ but $L\not\subset C_1$.
	
	\smallskip
	
	If $C_1$ and $C_2$ are both singular at $q$ then $S$ is singular at $q$ and $p$ is not isolated by \ref{projthm}.
\end{proof}

\begin{claim} \label{plcubicsing}
	Let $C\cap L$ be a triple point. If $C_i$ is smooth at $q$ then it can only have one $A_2$ or $A_1$ singularity away from $q$. If $C_i$ has a $D_4$ or $A_2$ singularity at $q$ then it does not have any other singularities. If $C_i$ has an $A_1$ singularity at $q$ then it can have another $A_1$ singularity.
\end{claim}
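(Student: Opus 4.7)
My plan is to run a case analysis on the singularity type of $C_i$ at $q$, using the strong constraint $(L\cdot C_i)_q = 3$ together with the classical classification of singular plane cubics. Recall that an irreducible plane cubic has at most one singularity, which is either $A_1$ (node) or $A_2$ (cusp), and a reducible cubic is either (i) the union of a line and an irreducible conic or (ii) the union of three lines; the earlier claim rules out double lines in $C_i$.

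Case $C_i$ smooth at $q$: Then $L$ is tangent to $C_i$ at $q$, and $(L\cdot C_i)_q = 3$ says $L$ is an inflectional tangent. I would rule out reducibility: if $C_i=\ell+Q'$ with $C_i$ smooth at $q$, then $q$ sits on exactly one component, and since $L\neq\ell$ one has $(L\cdot C_i)_q = (L\cdot\ell)_q \leq 1$ or $(L\cdot C_i)_q = (L\cdot Q')_q \leq 2$; for three lines (with $C_i$ smooth at $q$) similarly $(L\cdot C_i)_q\leq 1$. Hence $C_i$ is irreducible, and the cubic classification immediately gives at most one further singularity, of type $A_1$ or $A_2$.

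Case $C_i$ has $A_2$ at $q$: The cuspidal tangent must coincide with $L$ to achieve intersection multiplicity $3$. A cusp cannot be produced by transverse or tangential crossings of reducible components of a cubic, so $C_i$ is an irreducible cuspidal cubic, whose unique singularity is the cusp at $q$.

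Case $C_i$ has $D_4$ at $q$: The triple point forces $C_i$ to be three concurrent lines through $q$, all distinct by the no-double-line assumption, and hence $q$ is the only singular point of $C_i$.

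Case $C_i$ has $A_1$ at $q$: Here $L$ must be tangent to one of the two smooth branches at $q$ (contributing $1+2 = 3$ to the intersection). If $C_i$ is irreducible nodal, the node at $q$ is its only singularity. For reducible $C_i$: a three-line configuration giving a node at $q$ has exactly two lines through $q$ and a third missing $q$, forcing $(L\cdot C_i)_q \leq 1+1 = 2$ (since $L\not\subset C_i$), contradiction; so the only reducible option is $C_i=\ell+Q'$ with $\ell$ transverse to the smooth conic $Q'$ at $q$. Matching $(L\cdot C_i)_q=3$ forces $L$ tangent to $Q'$ and transverse to $\ell$, and by Bezout $\ell$ meets $Q'$ in exactly one other point $q'$, transversely, giving a single additional $A_1$ at $q'$ and no further singularities. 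I expect this line-plus-conic sub-case of the $A_1$ analysis to be the main obstacle, since it is the only configuration where tracking intersection multiplicities carefully is needed; all the other cases are essentially immediate from the classification of cubics.
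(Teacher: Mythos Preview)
Your argument is correct and follows essentially the same route as the paper: both reduce to the classification of plane cubics together with the constraint that $L$ meets $C_i$ only at $q$ (the paper phrases this as ``more than one intersection point with $L$'' rather than bounding $(L\cdot C_i)_q$, but these are equivalent via B\'ezout). Your version is more thorough---you spell out the $D_4$ and $A_2$ cases that the paper leaves implicit, and you verify that the line-plus-conic configuration in the $A_1$ case actually realizes the additional $A_1$---but the underlying idea is the same.
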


\begin{proof}
	If $C_i$ is smooth at $q$ then it should be irreducible because otherwise it will have more than one intersection point with $L$. An irreducible cubic curve can have at most one $A_2$ or $A_1$ singularity. If $C_i$ has an $A_1$ singularity at $q$ and two additional $A_1$ singularities then it is a union of three lines and thus intersects $L$ at two points.
\end{proof}

\begin{claim} \label{crk2p}
	Singularities of types $J_{10}$, $E_8$, $E_7$ and $E_6$ can appear on a cubic threefold.
\end{claim}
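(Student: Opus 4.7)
The plan is to construct an explicit cubic threefold with a $J_{10}$ singularity at $p$, and then to deduce the appearance of $E_8$, $E_7$, and $E_6$ by invoking the du~Plessis--Wall theorem (Theorem~\ref{F:DPW}).

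For the explicit construction, I would choose coordinates so that $f_2 = x_1 x_2$ (so $P_1 = \{x_1 = 0\}$, $P_2 = \{x_2 = 0\}$, and $L = \{x_1 = x_2 = 0\}$), and place the triple point $q = C \cap L$ at $[0:0:0:1]$. Following Claims~\ref{crk2c1} and~\ref{plcubicsing}, I would take $C_1 \subset P_1$ to be three distinct concurrent lines through $q$ (none equal to $L$), so that $C_1$ has a $D_4$ at $q$ and no other singularities; and $C_2 \subset P_2$ to be a smooth cubic for which $L$ is an inflectional tangent at $q$. A concrete representative is
\[
f_3 = (x_2 + x_3)(x_2 - x_3)(x_2 - 2 x_3) + x_1 x_4^2 + x_1^3,
\]
whose restriction to $\{x_1 = 0\}$ is the prescribed $C_1$, and whose restriction to $\{x_2 = 0\}$ is the smooth cubic $2 x_3^3 + x_1 x_4^2 + x_1^3$.

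To verify that $X = \{x_0 f_2 + f_3 = 0\}$ has a $J_{10}$ singularity at $p$, I would follow the pattern of the $U_{12}$ analysis in Section~\ref{sect_cr3}: perform a sequence of coordinate changes to eliminate lower-weight terms and reduce $f_2 + f_3$ to a form to which Arnold's determinator (\cite{Arnold2012}, Chapter~16) applies. The third jet $x_1 x_2 + 2 x_3^3$, together with the $D_4$ shape of $C_1$ at $q$, should route the case analysis to the class $J_{10} = T_{2,3,6}$. Because $C$ is smooth away from $q$ by construction and $q \in L = \mathrm{Sing}(Q)$, Theorem~\ref{projthm} further implies that $p$ is the only singularity of $X$.

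Once $X$ with $J_{10}$ is in hand, the remaining singularities follow by deformation. Indeed $\mu(X) = \mu(J_{10}) = 10 \le 15$, so by Theorem~\ref{F:DPW} the family of cubic threefolds induces a simultaneous versal deformation of the $J_{10}$ singularity at $p$. Since $J_{10}$ is adjacent to each of $E_8$, $E_7$, and $E_6$ (see Table~\ref{unimodal}), there exist small deformations of $X$ realizing each of these simple singularity types. The main obstacle is the middle step: one must be careful in the coordinate changes to confirm that the singularity type lies in the $J_{10}$ stratum rather than in a deeper hyperbolic $T_{2,3,r}$ stratum with $r > 6$. This amounts to checking that the appropriate coefficient of the relevant quasi-jet is nonzero for the chosen $f_3$, a direct but tedious verification within Arnold's framework; once it is established, the remainder of the argument is immediate from the deformation machinery already assembled.
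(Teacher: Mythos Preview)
Your approach is correct but takes a different route from the paper. The paper avoids any new explicit computation: having already established (Proposition~\ref{t266} and Corollary~\ref{t2pq}) that a $T_{246}$ singularity occurs on a cubic threefold, it simply invokes the adjacency $T_{246}\to J_{10}$ from Table~\ref{unimodal} together with Theorem~\ref{F:DPW} to obtain $J_{10}$, and then the further adjacencies $J_{10}\to E_8,E_7,E_6$ to finish. In other words, the paper reuses the one hard local computation (for $T_{266}$) and descends the adjacency table in two steps.

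Your alternative is to produce a $J_{10}$ directly. The chosen $f_3$ is fine: $C_1$ is three distinct concurrent lines through $q\in L$ (a $D_4$), $C_2$ is smooth with $L$ an inflectional tangent at $q$, and $\mathrm{Sing}(S)\cap L=\emptyset$ so $p$ is isolated. After the splitting lemma, the residual in $(x_3,x_4)$ has quasi-homogeneous leading part $2x_3^3+x_3^2x_4^2-2x_3x_4^4-x_4^6$ (weights $(1/3,1/6)$), which one checks has an isolated critical point, so the type is indeed $J_{10}$. Your stated worry about falling into a deeper $T_{2,3,r}$ stratum with $r>6$ is in fact moot: those types do not arise in the triple-point case (the paper's subsequent Proposition pins down exactly the four types $J_{10},E_8,E_7,E_6$ via the four possibilities for $C_1$ at $q$), and even if they did, $T_{2,3,r}\to J_{10}$ would still yield the claim via Theorem~\ref{F:DPW}.

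The trade-off: the paper's argument is three lines because the heavy lifting was already done for $T_{266}$; your argument is self-contained and illustrates the $J_{10}$ geometry directly, at the cost of an additional Arnold-determinator verification.
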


\begin{proof}
	A $T_{246}$ singularity is possible by Corollary \ref{t2pq} and is adjacent to $J_{10}$ (see Table \ref{unimodal}). A $J_{10}$ singularity is in turn adjacent to $E_8$, $E_7$ and $E_6$ singularities. The claim now follows from Theorem \ref{F:DPW}.
\end{proof}

\begin{proposition}
	All of the possible singularities on $X$ with the third jet equal to $x_3^3$ are $J_{10}$, $E_8$, $E_7$ and $E_6$. The maximal configurations among the ones containing these singularities are $J_{10}+A_2$, $E_7+A_2+A_1$ and $E_6+2A_2$.
\end{proposition}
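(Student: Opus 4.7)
The plan is to split along the four sub-cases of Claim \ref{crk2c1}---the singularity of $C_1$ at $q$ being $D_4$, $A_2$, $A_1$, or smooth (with $C_2$ always smooth at $q$)---and in each to identify the analytic type of $p$ by bringing $f_2+f_3=x_1x_2+f_3$ into an Arnold normal form. Starting from $j^3(f_2+f_3)=x_1x_2+x_3^3$ and augmenting with the higher-order information prescribed by the geometry of $C_1$ at $q$, I would complete squares in $x_1,x_2$ to eliminate cross-terms, perform weighted-homogeneous changes of the remaining variables, and then read off the type from the determinator (\cite{Arnold2012}, Ch.~16), exactly as in the $U_{12}$ proposition above. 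The expected outputs are $J_{10}=T_{236}$, $E_8$, $E_7$, and $E_6$ in the four cases respectively, matching the classical pattern in which a richer singularity of the section $C_1$ at $q$ yields a less degenerate corank-$2$ type at $p$.

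Having identified $p$, Theorem \ref{projthm} reduces the remaining singularities of $X$ to those of $C_1\cup C_2$ in the smooth locus of $Q$, that is, away from $q$, and Claim \ref{plcubicsing} bounds them. In the $D_4$ and $A_2$ cases $C_1$ has no further singularities and only a possible $A_2$ from $C_2$ contributes, giving $J_{10}+A_2$ and $E_8+A_2$. In the $A_1$ case $C_1$ may carry one more $A_1$ (a triangle with $L$ tangent to one branch of the node at $q$) and $C_2$ an $A_2$, giving $E_7+A_1+A_2$. In the smooth case both $C_1$ and $C_2$ may carry one $A_2$ each (an irreducible cuspidal cubic tangent to $L$ at a flex at $q$), giving $E_6+2A_2$. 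Existence of each configuration can be verified by writing $C_1,C_2$ explicitly; alternatively, once $J_{10}+A_2$ is realized, the rest follow from Theorem \ref{F:DPW}, since all of these configurations have Tjurina numbers at most $12<16$.

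Finally, for maximality in the sense of Definition \ref{max_comb}, the three listed configurations are each the deepest stratum in their respective sub-case. The only potentially missing candidate is $E_8+A_2$; it is omitted because the local adjacency $J_{10}\rightsquigarrow E_8$ (removing the affine vertex of the $\tilde E_8$ subdiagram) lifts to a deformation of $J_{10}+A_2$ inside the cubic-threefold family by Theorem \ref{F:DPW}, so $E_8+A_2$ is absorbed into the $J_{10}+A_2$ branch. I expect the main obstacle to be the normal-form computation in the $D_4$ sub-case: verifying that $p$ is exactly $J_{10}$ rather than a neighboring hyperbolic $T_{pqr}$ requires tracking several layers of coordinate changes and checking a nondegeneracy condition on the relevant weighted jet, analogous to the delicate steps in the $U_{12}$ argument above.
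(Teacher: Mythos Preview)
Your proposal is correct and would work, but the paper takes a much more economical route that avoids the four normal-form computations entirely. Instead of bringing $f_2+f_3$ into Arnold normal form in each sub-case, the paper argues by counting: Theorem~\ref{projthm} says the type of $p$ is determined by the local geometry of $C$ along $L$, and Claim~\ref{crk2c1} gives exactly four such geometries (the singularity of $C_1$ at $q$ is $D_4$, $A_2$, $A_1$, or nothing, with $C_2$ smooth at $q$). On the other hand, Claim~\ref{crk2p} already shows---by deforming down from $T_{246}$ via Theorem~\ref{F:DPW}---that $J_{10}$, $E_8$, $E_7$, $E_6$ all occur on cubic threefolds. Since these are four distinct types with third jet $x_3^3$ and there are only four geometric slots, they must correspond bijectively; the pairing is then fixed by comparing adjacencies. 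This sidesteps precisely the computation you flagged as the main obstacle, the $J_{10}$ normal form in the $D_4$ sub-case. Your direct approach buys an explicit identification of which geometry gives which type, but at the cost of four coordinate-change arguments of the $U_{12}$ flavour.

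For the second statement the two arguments essentially coincide: Claim~\ref{plcubicsing} bounds the extra singularities, and Theorem~\ref{F:DPW} handles existence and maximality, including the absorption of $E_8+A_2$ into $J_{10}+A_2$ that you correctly identified. One small correction to your geometric aside: a triangle cannot meet $L$ in a triple point at one of its vertices (the intersection multiplicity there is only $2$, and making $L$ coincide with a side would put $L\subset C$); the configuration realizing the extra $A_1$ in the $E_7$ case is rather a conic-plus-secant with $L$ tangent to the conic at one of the two intersection points.
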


\begin{proof}
	By Theorem \ref{projthm}, the singularity type of $p$ depends on the geometry of $C_1$ and $C_2$. To prove the proposition, we can combine claims \ref{crk2c1} and \ref{crk2p}: if $C_2$ is smooth then there are 4 possibilities for the geometry of $C_1$ at $q$, and we already know 4 singularity types with the third jet equal to $x_3^3$ that can appear on a cubic threefold.
	
	\smallskip
	
	The second statement of the proposition follows from Claim \ref{plcubicsing} and Theorem \ref{F:DPW}.
\end{proof}

\renewcommand{\arraystretch}{1.25}

\begin{table}[h]
	\begin{center}
		\begin{tabular}{|c | c | c | c|}\hline
			$\mu$&Singularity at $p$& Singularity of $C_1$ along $L$& Other singularities of $C_1$ \\ \hline \hline
			$10$&$J_{10}$&$D_4$& - \\ \hline
			$8$ &$E_{8}$ &$A_2$& - \\ \hline
			$7$ &$E_{7}$ &$A_1$& $A_1$ or none \\ \hline
			$6$ &$E_{6}$ & -   & $A_2$, $A_1$ or none \\ \hline\hline
			$8$ &$D_{8}$ &$A_3$& - \\ \hline
			$7$ &$D_{7}$ &$A_2$& - \\ \hline
			$6$ &$D_{6}$ &$A_1$& $2A_1$, $A_1$ or none \\ \hline
			$5$ &$D_{5}$ & -   & $A_3$, $A_2$, $2A_1$, $A_1$ or none \\ \hline\hline
			$4$ &$D_{4}$ & -   & $D_4$, $A_3$, $A_2$, $3A_1$, $2A_1$, $A_1$ or none \\ \hline
		\end{tabular}
		\bigskip
		\caption{Singularities of corank 2 with nonvanishing first jet}
		\label{crk2jnot0}
		\vspace*{-1em}
	\end{center}
\end{table}

\renewcommand{\arraystretch}{1}

\begin{claim} \label{crk2c1'}
	Let $C\cap L$ be a simple point $q_1$ and a double point $q_2$. Then both $C_1$ and $C_2$ are smooth at $q_1$. We can assume that $C_2$ is non-singular at $q_2$ and $C_1$ has an $A_3$, $A_2$, $A_1$ or no singularity at $q_2$.
\end{claim}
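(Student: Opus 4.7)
The plan is to mirror the coordinate setup used in the preceding claim and then reduce everything to the statement that intersection multiplicity $1$ forces transversality while intersection multiplicity $2$ permits only a restricted list of local pictures. Choose coordinates so that $P_1=\{x_1=0\}$, $P_2=\{x_2=0\}$, $L=\{x_1=x_2=0\}$, and (using the classification of third jets) $f_3(0,0,x_3,x_4)=x_3^2x_4$, so that $q_2=(0{:}0{:}1{:}0)$ and $q_1=(0{:}0{:}0{:}1)$. Since $L\subset P_1\cap P_2$, for each $i=1,2$ the restriction of the defining equation of $C_i\subset P_i$ to $L$ equals $f_3\vert_L=x_3^2x_4$; in particular the divisor $C_i\cdot L$ on $L$ equals $q_1+2q_2$ for both $i$.

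First I would handle $q_1$. The intersection multiplicity of $C_i$ and $L$ inside $P_i$ at $q_1$ is $1$, and a length-one intersection of a curve and a line in a smooth surface forces both to be smooth and transversal at that point. Hence $C_1$ and $C_2$ are both smooth at $q_1$ (and neither is tangent to $L$ there). This settles the first assertion essentially for free.

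Next I would address the non-singularity of one of $C_1,C_2$ at $q_2$. Suppose for contradiction that both were singular at $q_2$. The singularity of $C_i=\{x_i=0,\,f_3=0\}$ at $q_2$ is equivalent to the vanishing of all three partials of $f_3$ with respect to the coordinates on $P_i$. Taking $i=1$ and $i=2$ together, all four partials $\partial f_3/\partial x_j$ ($j=1,\dots,4$) vanish at $q_2$, so $S$ is singular there. But $q_2\in L=\mathrm{Sing}(Q)$, so $q_2\in\mathrm{Sing}(Q)\cap\mathrm{Sing}(S)$, and Theorem~\ref{projthm} then says $X$ is singular along the line $\langle p,q_2\rangle$, contradicting that $p$ is isolated. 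Hence, relabeling if necessary, $C_2$ is smooth at $q_2$.

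Finally, I would restrict the possible singularity types of $C_1$ at $q_2$. As $C_1$ is a plane cubic, its only possible isolated singularities are $A_1, A_2, A_3, D_4$. The key numerical constraint is that $C_1\cdot L = q_1+2q_2$ forces the intersection multiplicity of $L$ with $C_1$ at $q_2$ to be exactly $2$, which in turn bounds the multiplicity of $C_1$ at $q_2$ by $2$. Since $D_4$ (three concurrent lines) has multiplicity $3$ at its singular point, it is excluded; this leaves $A_1$, $A_2$, $A_3$, or a smooth point, as claimed. The only mildly delicate step here is the numerical bound on multiplicity—I expect this to be the main, but still routine, obstacle; it follows from the general fact $\mathrm{mult}_{q_2}(C_1)\le (L\cdot C_1)_{q_2}$ with equality iff $L$ is not contained in the tangent cone.
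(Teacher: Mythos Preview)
Your argument is correct and is exactly a detailed spelling-out of what the paper records in one line as ``Analogous to Claim~\ref{crk2c1}.'' The three ingredients---transversality at $q_1$ from intersection multiplicity $1$, the exclusion of both $C_i$ being singular at $q_2$ via $\mathrm{Sing}(Q)\cap\mathrm{Sing}(S)$ and Theorem~\ref{projthm}, and the restriction of singularity types at $q_2$ by bounding the local multiplicity of $C_1$ by $(L\cdot C_1)_{q_2}=2$---are precisely the analogue of the proof of Claim~\ref{crk2c1}, with $D_4$ excluded here (multiplicity~$3$) in place of the exclusion of $A_3$ there.
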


\begin{proof}
	Analogous to Claim \ref{crk2c1}.
\end{proof}

\begin{claim} \label{plcubicsing'}
	Let $C\cap L$ be a simple point $q_1$ and a double point $q_2$. If $C_i$ is smooth at $q_2$ then it can have $A_3$, $A_2$, $2A_1$ or $A_1$ combinations of singularities away from $q_2$. If $C_i$ has an $A_3$ or $A_2$ singularity at $q_2$ then it does not have any other singularities. If $C_i$ has an $A_1$ singularity at $q_2$ then it can have two more $A_1$ singularities.
\end{claim}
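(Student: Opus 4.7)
The plan is to run through the possible singularity types of $C_i$ at $q_2$ and determine, in each case, what additional singularities a plane cubic meeting $L$ as prescribed can carry. The intersection-theoretic constraint $L\cdot C_i = 3$, split as a simple point at $q_1$ and a double point at $q_2$, is the main handle: together with the classification of plane cubic curves, it forces the global decomposition of $C_i$.

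First I would treat the case where $C_i$ is smooth at $q_2$, so that $L$ is the tangent line to $C_i$ at $q_2$. If $C_i$ is irreducible, it has at most one singular point, necessarily of type $A_2$ or $A_1$. If $C_i$ is reducible, a decomposition into three lines is excluded, since at a smooth point $q_2$ of a line component the tangent direction of $C_i$ equals that component, forcing $L$ to coincide with it and contradicting $L\not\subset C_i$. Hence $C_i$ splits as a conic plus a line, and the tangency at $q_2$ places $q_2$ on the conic; the conic-line intersection is then either a tangency (yielding $A_3$ away from $q_2$) or a transverse pair (yielding $2A_1$ away from $q_2$). This exhausts the smooth-at-$q_2$ case.

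The remaining cases are short. If $C_i$ has an $A_3$ at $q_2$, it must be a conic plus its tangent line at $q_2$, with no room for a further singular point. If $C_i$ has an $A_2$ at $q_2$, it is an irreducible cuspidal cubic, which has a unique singular point. If $C_i$ has an $A_1$ at $q_2$, the possibilities are an irreducible nodal cubic (no further singularity), a conic plus a secant with $q_2$ at one of the two transverse nodes (one additional $A_1$ at the other node), or three lines forming a triangle with $q_2$ at a vertex (two additional $A_1$'s at the remaining vertices). In every configuration realizability follows by choosing $L$ through $q_2$ transverse to the relevant tangent data, so the argument is parallel to Claim \ref{plcubicsing} and the only genuine work is the case enumeration above; the mild obstacle is merely keeping track of which line/conic components pass through $q_2$ in each subcase.
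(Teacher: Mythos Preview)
Your argument is correct and is precisely the detailed version of what the paper intends: the paper's entire proof of this claim is the single line ``Analogous to Claim \ref{plcubicsing},'' and your case enumeration is exactly how that analogy plays out once one accounts for the fact that here $L$ meets $C_i$ in two points rather than one (so reducible $C_i$ of conic-plus-line type is no longer excluded in the smooth-at-$q_2$ case, yielding the extra $A_3$ and $2A_1$ possibilities).
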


\begin{proof}
	Analogous to Claim \ref{plcubicsing}.
\end{proof}

\begin{proposition} \label{Dn_max}
	All of the possible singularities on $X$ with the third jet equal to $x_3^2x_4$ are $D_k$, $5\leq k\leq 8$. The maximal cases among the ones containing such $D_k$ are $D_8+A_3$, $D_6+A_3+2A_1$ and $D_5+2A_3$.
\end{proposition}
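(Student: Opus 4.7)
The proof parallels the preceding proposition (the corank 2, third-jet $x_3^3$ case). The strategy is to identify the local singularity type at $p$ from the geometry of $C_1$ at $q_2$, combine with the singularities of $C$ away from $L$ using Theorem \ref{projthm}, and finally extract the maximal combinations by induced-subgraph considerations via Theorem \ref{Gr}.

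\emph{Local type at $p$.} Starting from the equation $f_2+f_3 = x_1 x_2 + x_3^2 x_4 + (\text{higher-order corrections})$ and performing a normalization analogous to the one in Proposition \ref{t266}, the local equation is placed into the $D$-series of Arnold's determinator (Chapter 16 of \cite{Arnold2012}). By Claim \ref{crk2c1'} the remaining freedom is controlled by the singularity of $C_1$ at $q_2$, which is of type $A_3$, $A_2$, $A_1$, or smooth; the corresponding singularity at $p$ is $D_8$, $D_7$, $D_6$, or $D_5$ respectively. To avoid redoing the normal-form computation case by case, I combine this with an existence argument: $J_{10}$ is realized on a cubic threefold by Claim \ref{crk2p}, and by Table \ref{unimodal} it is adjacent to each $D_k$ with $5\le k\le 8$; Theorem \ref{F:DPW} (whose hypothesis $\tau(X) < 16$ holds here) then guarantees realization of each $D_k$. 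No $D_k$ with $k\ge 9$ can appear: a plane cubic has at worst an $A_3$ singularity at a point of double intersection with a line, which bounds the Milnor contribution of $q_2$ and hence the index $k$.

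\emph{Global combinations.} Applying Theorem \ref{projthm} together with Claim \ref{plcubicsing'}, and using that singularities of $C$ away from $L$ contribute directly to singularities of $X$, one obtains in each case: the $D_8$ case forces $C_1$ to have no further singularities, so with $C_2$ carrying at most an $A_3$ one reaches $D_8+A_3$; the $D_7$ case similarly yields at most $D_7+A_3$; the $D_6$ case allows $C_1$ an extra $2A_1$ and $C_2$ up to $A_3$, producing $D_6+A_3+2A_1$; the $D_5$ case has both $C_1$ and $C_2$ smooth at $q_2$ with each carrying up to $A_3$ away from $L$, giving $D_5+2A_3$. All intermediate Tjurina numbers remain below $16$, so Theorem \ref{F:DPW} yields existence.

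\emph{Maximality.} By Theorem \ref{Gr}, deformations among these combinations correspond to induced-subgraph embeddings of the relevant disjoint unions of $ADE$ diagrams. Connectedness of $D_8$ prevents any of $D_5 \sqcup A_3$, $D_6 \sqcup 2A_1$, or $D_7 \sqcup A_1$ from embedding as an induced subgraph of $D_8$, so $D_8+A_3$ does not deform to $D_5+2A_3$, $D_6+A_3+2A_1$, or $D_7+A_3+A_1$. On the other hand $D_7 \hookrightarrow D_8$, hence $D_7+A_3$ deforms from $D_8+A_3$ and is not maximal. An analogous connectedness check confirms that $D_8+A_3$, $D_6+A_3+2A_1$, and $D_5+2A_3$ are pairwise incomparable, leaving exactly these three maximal cases. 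The hardest step is the first: verifying via Arnold's determinator that the $D_k$-index equals $5 + \mu_{q_2}(C_1)$; this could alternatively be replaced by a direct Milnor-number count combined with the corank $2$ constraint, which pins down the $D$-series.
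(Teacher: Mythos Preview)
Your proof is correct and follows essentially the same approach as the paper: identify the $D_k$ range via Arnold's determinator together with the count of four geometric configurations of $C_1$ at $q_2$ (Claim~\ref{crk2c1'}), then read off the maximal combinations from Claim~\ref{plcubicsing'} combined with Theorem~\ref{F:DPW}. You add explicit induced-subgraph checks for pairwise incomparability and a separate existence argument via $J_{10}$ (note that Table~\ref{unimodal} only records $J_{10}\to E_8$, so the adjacency to $D_k$ goes through $E_8$ and Theorem~\ref{Gr}; also, your ``connectedness of $D_8$'' step implicitly uses that the compared configurations have equal total Milnor number, forcing all vertices of $D_8$ to be used), whereas the paper's proof is terser and leaves these verifications implicit.
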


\begin{proof}
	By case 5 in Chapter 16 of \cite{Arnold2012}, if the third jet of $p$ is equal to $x_3^2x_4+x_4^3$ then $p$ is a $D_k$ singularity with $k\geq 5$.
	
	\smallskip
	
	By Theorem \ref{projthm}, the singularity type of $p$ depends on the geometry of $C_1$ and $C_2$. Claim \ref{crk2c1'} says that if $C_2$ is smooth then there are 4 possibilities for the geometry of $C_1$ at $q_2$. It now follows from Theorem \ref{F:DPW} that $p$ is of type $D_k$ such that $5\leq k\leq 8$.
	
	\smallskip
	
	The second statement of the proposition follows from Claim \ref{plcubicsing'} and Theorem \ref{F:DPW}.
\end{proof}

By case 4 in Chapter 16 of \cite{Arnold2012}, if the third jet of $p$ is equal to $x_3^2x_4+x_4^3$, then $p$ is a $D_4$ singularity.

\begin{proposition} \label{3D4sing}
	The maximal configuration of singularities on $X$ among the ones containing $D_4$ is $3D_4$.
\end{proposition}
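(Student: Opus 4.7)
The plan is to prove the proposition in two steps: first exhibit a cubic threefold realizing the $3D_4$ configuration, then use Theorem~\ref{F:DPW} together with the classification of plane cubic singularities to show every configuration containing $D_4$ is a specialization of $3D_4$.

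For the realization, I would consider the explicit cubic threefold
\[
X=V\bigl(x_0x_1x_2+x_3x_4(x_3+x_4)\bigr).
\]
A direct computation of the partial derivatives shows that its singular locus consists of exactly three points, $p_0=[1{:}0{:}0{:}0{:}0]$, $p_1=[0{:}1{:}0{:}0{:}0]$, and $p_2=[0{:}0{:}1{:}0{:}0]$. In the affine chart at $p_i$ (set $x_i=1$), the local equation takes the form $y_jy_k+x_3x_4(x_3+x_4)$ with $\{i,j,k\}=\{0,1,2\}$; since $x_3x_4(x_3+x_4)$ is a product of three distinct linear forms in the two variables $(x_3,x_4)$, this is a local model of $D_4$ after a linear change of coordinates in $(x_3,x_4)$. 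Hence $X$ realizes precisely the $3D_4$ configuration.

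For maximality, suppose $X$ is any cubic threefold with a $D_4$ singularity at a point $p$. By the third-jet analysis of this section, $p$ being $D_4$ forces $C\cap L$ to consist of three distinct simple points, so each component $C_i\subset P_i$ is a plane cubic meeting $L$ transversally at three smooth points. By Theorem~\ref{projthm}, the remaining singularities of $X$ correspond to the singularities of $C_1\cup C_2$ lying off $L$. The possible isolated singularities of a plane cubic curve are $A_1$, $2A_1$, $3A_1$, $A_2$, $A_3$, and $D_4$ (Example~\ref{Cubic_curves}); moreover a plane cubic with a $D_4$ singularity (three concurrent lines) has no other singular points, so each $C_i$ contributes at most one singularity, which is necessarily a specialization of $D_4$ by Theorem~\ref{Gr}. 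Thus every configuration of $X$ containing $D_4$ has the form $D_4+K_1+K_2$ with each $K_i$ in the deformation closure of $D_4$.

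Now Theorem~\ref{F:DPW} applied to the $3D_4$ cubic constructed above (total Tjurina number $12<16$) says that the family of cubic threefolds induces a simultaneous versal deformation of the three $D_4$ factors, so every choice of independent local deformations of the three factors is globally realized. Consequently each admissible combination $D_4+K_1+K_2$ arises from a deformation of this $3D_4$ cubic, so $3D_4$ is maximal in the sense of Definition~\ref{max_comb}. I expect the main obstacle to be the explicit realization step: one must produce a single cubic equation whose zero scheme simultaneously has three $D_4$ points, and the clean trick is to use an $S_3$-symmetric shape in three of the five coordinates that forces three coordinate $D_4$ points at once; after that the argument reduces to straightforward invocations of Theorem~\ref{F:DPW} and the plane-cubic classification.
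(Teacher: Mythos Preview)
Your proof is correct and follows essentially the same route as the paper: analyze the two plane cubics $C_1,C_2$ via the projection method, observe that each contributes a plane-cubic combination (hence a specialization of $D_4$), and invoke Theorem~\ref{F:DPW} for maximality. The one difference is in the existence step: the paper exhibits the $3D_4$ threefold by drawing the curve $C$ (two triples of concurrent lines meeting $L$ in three simple points), whereas you give an explicit defining equation, which is a nice addition.

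One wording issue: your sentence ``each $C_i$ contributes at most one singularity'' is literally false (e.g.\ a triangle contributes $3A_1$, three points). What you mean---and what your next sentence makes clear---is that the \emph{combination} of singularities on each $C_i$ is a specialization of $D_4$. You should also say explicitly why the singular points of $C_i$ lie off $L$: since $C\cap L$ consists of three simple points, each $C_i$ meets $L$ transversally and is therefore smooth along $L$.
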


\begin{proof}
	If $p$ is a $D_4$ singularity, then $C\cap L$ is three simple points and both $C_1$ and $C_2$ are smooth at these points. Each $C_i$ can have any of the possible plane cubic combinations of singularities: $D_4$, $A_3$, $A_2$, $3A_1$, $2A_1$, $A_1$ or no singularities. By Theorem \ref{F:DPW}, $3D_4$ is the maximal combination containing $D_4$. The curve $C$ for a cubic threefold with $3D_4$ singularities is shown in Figure \ref{F:3D4}.
\end{proof}

\begin{figure}
	\begin{center}
		\includestandalone{D4curve}
		\bigskip
		\caption{The curve $C$ for cubic threefolds with $3D_4$ singularities}
		\label{F:3D4}
	\end{center}
\end{figure}     

The list of all of the possible combinations of singularities of corank 2 with nonvanishing third jet is given in Table \ref{crk2jnot0}.

\section{Constellations of $A_n$ singularities} \label{sect_an}

Let $X\subset\bbP^4$ be a cubic threefold with $A_n$ singularities only. Let $Q$, $S$, $C$ be as in Section \ref{notation}. We have $\deg C=6$ and $g_a(C)=4$ since $C$ is a complete intersection curve of multidegree $(2,3)$.

\medskip

If $X$ has only $A_1$ singularities, then it is classically known that it contains at most 10 of them. The Segre cubic (\cite{Segre}) has exactly 10 nodes, and thus any number less than 10 is also possible by Theorem \ref{F:DPW}. 

\begin{proposition} \label{a1max}
	The maximal configuration of $A_1$ singularities on a cubic threefold is $10A_1$.  
\end{proposition}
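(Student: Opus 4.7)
The plan is to combine the projection method with a genus-based count of nodes on the associated $(3,3)$ curve. Existence of the $10A_1$ configuration is classical, since the Segre cubic has exactly ten nodes, as mentioned in the introduction. The content of the proposition is therefore the upper bound: no cubic threefold carries more than ten $A_1$ singularities.

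First I would fix one of the nodes $p\in X$ and apply the projection method of Section~\ref{projection}. Because $p$ is an $A_1$ singularity, the projectivised tangent cone $Q\subset\bbP^3$ is a smooth quadric, isomorphic to $\bbP^1\times\bbP^1$, and the associated curve $C=Q\cap S$ is of bidegree $(3,3)$ on $Q$. Since $Q$ is smooth everywhere, Theorem~\ref{projthm}(i) gives a type-preserving bijection between the singular points of $X$ other than $p$ and the singular points of $C$; in particular the remaining $A_1$ singularities of $X$ correspond precisely to the nodes of $C$. So it is enough to show that $C$ has at most nine nodes.

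Before invoking the arithmetic genus I would check that $C$ is reduced. If $C$ had an irreducible component of multiplicity at least two, every point of that component would be a singular point of $C$, and the projection correspondence would force $X$ to be singular along a one-parameter family of lines through $p$, contradicting the isolated-singularity hypothesis (this is analogous to Claim~\ref{crk2doubleline}(a)). Granted reducedness, write the irreducible decomposition $C=C_1\cup\dots\cup C_m$ with geometric genera $g_i$, and let $\delta$ denote the number of nodes of $C$. The normalisation short exact sequence together with $p_a(C)=4$ yields
\[
\delta \;=\; p_a(C)-1+m-\sum_{i=1}^{m} g_i \;=\; 3+m-\sum_{i=1}^{m} g_i.
\]
Each $C_i$ has bidegree $(a_i,b_i)$ with $a_i+b_i\ge 1$ and $\sum a_i=\sum b_i=3$, so $m\le 6$, with equality only if every component is a line on $Q$. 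Hence $\delta\le 9$ and $X$ has at most ten nodes in total; the extremal case is realised by three fibres from each ruling of $Q$, which is the picture of the Segre cubic.

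Maximality of $10A_1$ in the class of $A_1$-configurations then follows at once, since no cubic carries $kA_1$ with $k>10$, so no larger $A_1$-configuration can degenerate to $10A_1$. The main obstacle in the plan is the reducedness step, where one has to be careful that a multiple component of $C$ truly forces a positive-dimensional singular locus on $X$; the remainder of the argument is essentially bookkeeping with the arithmetic genus formula and bidegrees.
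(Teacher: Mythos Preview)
Your argument is correct and complete. The paper, however, does not actually prove this proposition: it simply records (in the sentence immediately preceding the statement) that the bound of ten nodes is classical, due to Segre, and that the Segre cubic realises it; the versal deformation theorem then gives all smaller numbers of nodes. So there is no proof in the paper to compare against---you have supplied one.

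Your route is nonetheless entirely in the spirit of the paper's Section~\ref{sect_an}: project from a node so that $Q\cong\bbP^1\times\bbP^1$, identify the remaining nodes of $X$ with the nodes of the bidegree-$(3,3)$ curve $C$ via Theorem~\ref{projthm}, and bound those by the arithmetic-genus/normalisation formula $\delta = p_a(C)-1+m-\sum g_i = 3+m-\sum g_i\le 9$, using that a $(3,3)$ curve has at most six irreducible components. This is exactly how the paper handles the corank~$1$ cases with $n\ge 2$ (on $\bbF_2$ rather than $\bbP^1\times\bbP^1$), so your proof fits seamlessly. The reducedness step you flag is genuine but easy: a multiple component of $C$ gives a one-parameter family of singular points of $C$ in the smooth locus of $Q$, and Theorem~\ref{projthm}(i) then produces a one-parameter family of singular points of $X$ away from $p$, contradicting isolatedness.
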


In this section, we will describe possible constellations of $A_n$ singularities on $X$ using the projection method introduced in Section \ref{projection}. We will be projecting from the worst singular point $p\in X$. From now on, we will assume that $p$ is of $A_n$ type with $n>1$. By Claim \ref{crk1}, this is equivalent to $Q\subset\bbP^3$ being of corank 1, which means that after a coordinate change $Q$ can be defined by the equation $x_1^2+x_2^2+x_3^2=0$. The blow-up $\Qb$ of $Q$ at $v=[0:0:0:1]$ is thus the Hirzebruch surface $\bbF_2$. Denote by $\Cb\in\bbF_2$ the strict transform of $C\in Q$.

\begin{lemma} [{\cite{beauville}, Proposition IV.1}]
	The Picard group $\Pic\bbF_2$ is isomorphic to $\bbZ\sigma\oplus\bbZ f$ where $\sigma$ is the class of the unique irreducible curve with negative self-intersection and $f$ is the class of a fibre. We have $\sigma^2=-2$, $\sigma.f=1$, $f^2=0$, $K_{\bbF_2}=-2\sigma-4f$.
\end{lemma}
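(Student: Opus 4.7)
The plan is to recognize $\bbF_2 = \Qb$ as the projective bundle $\pi: \bbP(\calO_{\bbP^1} \oplus \calO_{\bbP^1}(-2)) \to \bbP^1$. This identification is standard: the quadric cone $Q \subset \bbP^3$ defined by $x_1^2+x_2^2+x_3^2=0$ is a cone over a smooth conic $\cong \bbP^1$ with vertex $v$, so the lines through $v$ give a ruling; after blowing up $v$, this ruling extends to a globally defined $\bbP^1$-bundle structure on $\Qb$, and the exceptional divisor is precisely the negative section $\sigma$.

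Given this identification, the projective bundle formula yields $\Pic(\bbF_2) \cong \bbZ[\calO_{\bbP(E)}(1)] \oplus \pi^*\Pic(\bbP^1) = \bbZ\sigma \oplus \bbZ f$, where $f = \pi^*(\mathrm{pt})$. The intersection numbers $f^2 = 0$ and $\sigma \cdot f = 1$ are immediate from the geometric picture (disjoint fibers; a section meets each fiber transversally once). The identity $\sigma^2 = -2$ can be read off from the standard fact that a section arising from a rank-one quotient $L$ of $E$ has self-intersection $\deg L = -2$; alternatively, it follows from the fact that $\sigma$ contracts to a rational double point of type $A_1$ on the cone $Q$, whose minimal resolution must introduce a single $(-2)$-curve.

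For the canonical class, I would invoke the relative canonical bundle formula $\omega_{\bbP(E)} = \calO_{\bbP(E)}(-2) \otimes \pi^*(\omega_{\bbP^1} \otimes \det E)$ with $\det E = \calO_{\bbP^1}(-2)$; the twisted pullback has total degree $-2 + (-2) = -4$, which yields $K_{\bbF_2} = -2\sigma - 4f$. As a sanity check, adjunction on $\sigma \cong \bbP^1$ requires $(K_{\bbF_2} + \sigma) \cdot \sigma = 2g(\sigma) - 2 = -2$, and indeed $(-\sigma - 4f) \cdot \sigma = -(-2) - 4 = -2$, so the formula is consistent.

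The only step that requires genuine argument rather than formal manipulation is the uniqueness of $\sigma$ as the irreducible curve of negative self-intersection. For any irreducible $D$ distinct from $\sigma$ and from a fiber, writing $D = a\sigma + bf$ with $a \geq 1$, proper intersection with $\sigma$ forces $D \cdot \sigma = -2a + b \geq 0$, hence $b \geq 2a$ and $D^2 = -2a^2 + 2ab = 2a(b-a) \geq 0$. This is essentially the only conceptual point in the proof, and since the remainder reduces to standard bookkeeping with projective bundle constructions, I do not anticipate any serious obstacles.
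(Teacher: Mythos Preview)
The paper does not prove this lemma at all; it is simply quoted from Beauville's book as a standard fact about Hirzebruch surfaces, so there is nothing to compare your argument against. Your proposal is a correct and self-contained proof of the statement.

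One small imprecision: the claim that ``a section arising from a rank-one quotient $L$ of $E$ has self-intersection $\deg L$'' is not correct as a general formula. For a rank-two bundle $E$ on a curve and a quotient $E \twoheadrightarrow L$ with kernel $K$, the corresponding section has normal bundle $L \otimes K^\vee$, hence self-intersection $\deg L - \deg K = 2\deg L - \deg E$. In the case at hand ($E = \calO \oplus \calO(-2)$, $L = \calO(-2)$, $K = \calO$) both formulas happen to give $-2$, so your conclusion is unaffected; and in any case your alternative argument via the $A_1$ resolution is fully rigorous. The uniqueness argument and the adjunction sanity check for $K_{\bbF_2}$ are fine.
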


\begin{remark} \label{H}
	The class $H\in\Pic\bbF_2$ of the preimage of a hyperplane section of $Q$ not intersecting $v$ is equal to $\sigma+2f$.
\end{remark}

\begin{lemma} \label{classofc}
	If $p$ is of $A_n$ type with $n>2$, then $C$ passes through $v$, $g_a(\Cb)=3$, and $[\Cb]=2\sigma+6f$.
\end{lemma}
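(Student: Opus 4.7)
The plan is to verify the three assertions in turn, using the normal form $f_2 = x_1^2 + x_2^2 + x_3^2$ under which the vertex of the cone $Q$ is $v = [0:0:0:1]$.

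First I would establish that $C$ passes through $v$: applying the splitting lemma to $f = f_2 + f_3$, the singularity at $p$ is formally equivalent to $x_1^2 + x_2^2 + x_3^2 + h(x_4)$, with the leading coefficient of $h$ equal to the coefficient of $x_4^3$ in $f_3$. The assumption that $p$ is of type $A_n$ with $n > 2$ forces $h$ to vanish to order at least $4$ in $x_4$, hence the coefficient of $x_4^3$ in $f_3$ vanishes, i.e.\ $v \in V(f_3) = S$. Thus $v \in Q \cap S = C$.

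For the class $[\Cb] = 2\sigma + 6f$: since $p$ is isolated, Theorem~\ref{projthm} forbids $v$ from lying in $Sing(Q) \cap Sing(S)$; as $v \in Sing(Q)$, we must have $S$ smooth at $v$. Now $C \sim 3H_Q$ on $Q$, so Remark~\ref{H} gives $\pi^* C \sim 3(\sigma + 2f) = 3\sigma + 6f$ in $\Pic(\bbF_2)$. By the projection formula $\pi^* C \cdot \sigma = 0$, so writing $\pi^* C = \Cb + m\sigma$ and using $\sigma^2 = -2$ yields $\Cb \cdot \sigma = 2m$. It therefore suffices to show $\Cb \cdot \sigma = 2$, which I would verify by passing to the blow-up of the ambient $\bbP^3$ at $v$: the proper transform of $Q$ meets the exceptional $\bbP^2$ along the conic $\sigma = V(x_1^2 + x_2^2 + x_3^2)$, while the proper transform of $S$ meets it along the projectivized tangent hyperplane to $S$ at $v$ (a line in $\bbP^2$). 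By Bezout these intersect in two points, so $\Cb \cdot \sigma = 2$, $m = 1$, and $[\Cb] = 2\sigma + 6f$.

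The genus is then immediate from adjunction on the smooth surface $\bbF_2$: with $K_{\bbF_2} = -2\sigma - 4f$, one has $K_{\bbF_2} + \Cb = 2f$, hence $(K_{\bbF_2} + \Cb) \cdot \Cb = 2f \cdot (2\sigma + 6f) = 4$ and $g_a(\Cb) = 3$.

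The main obstacle I expect is the multiplicity step, since $Q$ is singular at $v$ and naive pullback formulas on smooth ambient spaces do not apply directly. The cleanest workaround, as sketched above, is to move to the blow-up of $\bbP^3$ at $v$ and reduce the computation of $\Cb \cdot \sigma$ to the elementary intersection of a line and a conic in $\bbP^2$; this also gives the geometric picture of the two tangent directions along which $C$ approaches $v$.
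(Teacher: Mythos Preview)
Your proof is correct but proceeds in the opposite order from the paper. The paper first invokes Theorem~\ref{projthm}(ii) and Claim~\ref{Anblowup} to see that $C$ carries an $A_{n-2}$ singularity at $v$ (so in particular $v\in C$) and that $\Cb$ carries an $A_{n-4}$ singularity there; this gives $g_a(\Cb)=g_a(C)-1=3$ directly. It then writes $[\Cb]=a\sigma+6f$, solves the adjunction equation $-a^2+6a-5=3$ to get $a\in\{2,4\}$, and discards $a=4$ because it would force $\Cb\cdot\sigma<0$. You instead determine the class first---via the blow-up of the ambient $\bbP^3$, reducing $\Cb\cdot\sigma$ to a line-meets-conic count in the exceptional $\bbP^2$---and then read off the genus from adjunction. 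Your route is more self-contained (it bypasses Theorem~\ref{projthm}(ii) and Claim~\ref{Anblowup}), while the paper's route feeds into the later $A_n$ analysis, where knowing the exact singularity type of $\Cb$ above $v$ is used repeatedly. One small point worth making explicit: your identification of $\Cb\cdot\sigma$ in $\bbF_2$ with the intersection number in $E\cong\bbP^2$ is valid because both compute the length of the same proper zero-dimensional scheme $\widetilde Q\cap\widetilde S\cap E$ in $\mathrm{Bl}_v\bbP^3$, and $\sigma$ is not a component of $\widetilde S\cap\widetilde Q$ since the tangent line to $S$ is not contained in the smooth tangent conic to $Q$.
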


\begin{proof}
	By Theorem \ref{projthm} and Claim \ref{Anblowup}, $v$ is a singularity of $A_{n-2}$ type on $C$, and in particular $C$ passes through $v$ since $n-2>0$. After blowing up, we get a singularity of $A_{n-4}$ type on $\Cb$, and thus $g_a(\Cb)=g_a(C)-1=3$ by Proposition \ref{gdrop}.
	
	\smallskip
	
	Let $[\Cb]=a\sigma+bf$ in $\Pic\bbF_2$. Since $\deg C=6$, $\Cb.H=6$ and $b=6$ where $H$ is as in Remark \ref{H}.
	
	\smallskip
	
	By the genus formula, $g_a(\Cb)=\frac{1}{2}\Cb.(\Cb+K_{\bbF_2})+1=\frac{1}{2}(a\sigma+6f).(a\sigma+6f-2\sigma-4f)+1=-a^2+6a-5$. Solving $-a^2+6a-5=3$, we get $a=2$ or $a=4$. If $a=4$ then $\Cb.\sigma=-2<0$ which is impossible because $\Cb$ is the strict transform of $C$ and $\sigma$ is the class of the exceptional divisor of the blow-up. Thus $a=2$ and $[\Cb]=2\sigma+6f$.
\end{proof}

\begin{lemma} \label{a2}
	If $p$ is of $A_2$ type, then $C$ does not pass through $v$, $g_a(\Cb)=4$, and $[\Cb]=3\sigma+6f$.
\end{lemma}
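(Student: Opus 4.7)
The plan is to establish the three assertions in sequence, mirroring the structure of Lemma \ref{classofc} but with the simpler input that $p$ is of type $A_2$ rather than $A_n$ with $n>2$.

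First I would show that $v\notin C$. Applying Theorem \ref{projthm}(ii) together with Claim \ref{Anblowup} (exactly as in the proof of Lemma \ref{classofc}), the type of singularity of $X$ at $p$ is controlled by the local picture of $C$ at $v\in\mathrm{Sing}(Q)$: if $C$ has an $A_k$-type singularity at $v$ (with $k\ge 0$ and $k=0$ meaning a smooth point of $C$ at $v$), then the singularity of $X$ at $p$ is of type $A_{k+2}$. Since $p$ is only of type $A_2$, the point $v$ cannot lie on $C$ at all; otherwise we would obtain $p$ of type $A_n$ with $n\ge 3$.

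Second, since $v\notin C$, the blow-up $\mathbb{F}_2\to Q$ is an isomorphism in a neighbourhood of $C$, so $\Cb\cong C$. In particular $g_a(\Cb)=g_a(C)=4$ because $C$ is a complete intersection of type $(2,3)$ in $\bbP^3$.

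Third, write $[\Cb]=a\sigma+bf$ in $\Pic\bbF_2$. Disjointness of $\Cb$ from the exceptional divisor of $\bbF_2\to Q$ gives $\Cb\cdot\sigma=0$, i.e.\ $-2a+b=0$. Using Remark \ref{H}, the class $H=\sigma+2f$ satisfies $\Cb\cdot H=\deg C=6$, which simplifies to $b=6$. Hence $a=3$ and $[\Cb]=3\sigma+6f$. As a sanity check, the genus formula yields
\[
g_a(\Cb)=\tfrac{1}{2}(3\sigma+6f)\cdot(\sigma+2f)+1=\tfrac{1}{2}(-6+6+6)+1=4,
\]
in agreement with step two.

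The only step requiring any real input beyond bookkeeping is the first one, and even that is a routine application of the same tools already invoked in Lemma \ref{classofc}. The rest is linear algebra on $\Pic\bbF_2$, so I do not anticipate any genuine obstacle.
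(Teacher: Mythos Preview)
Your argument for the first step has an internal inconsistency. You set up the dictionary ``$C$ has an $A_k$ point at $v$ $\Rightarrow$ $p$ is of type $A_{k+2}$'' and explicitly allow $k=0$ to mean that $v$ is a \emph{smooth} point of $C$. Under your own dictionary, $k=0$ yields $p$ of type $A_2$, which is precisely the hypothesis. So from Theorem~\ref{projthm}(ii) and Claim~\ref{Anblowup} alone you cannot conclude $v\notin C$: the case ``$v\in C$ and $C$ smooth at $v$'' is not excluded by anything you have written, yet your last sentence asserts it would force $n\ge 3$.

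The missing observation, which the paper states, is that this smooth case is impossible for a geometric reason: $v$ is the vertex of the cone $Q$, so $\nabla Q(v)=0$; hence if $v\in C=Q\cap S$ then the Jacobian of $(Q,S)$ has rank at most one at $v$ and $C$ is automatically singular there. Equivalently, $C$ is cut out on the smooth surface $S$ by the restriction of the quadric, whose differential vanishes at $v$. Once you insert this line, your argument goes through: $v\in C$ forces $k\ge 1$, hence $p$ of type $A_{\ge 3}$, contradiction.

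Your steps two and three are correct. In step three you compute $a$ via $\Cb\cdot\sigma=0$ rather than via the genus formula as in Lemma~\ref{classofc}; this is a clean shortcut available here precisely because $v\notin C$, and it avoids solving a quadratic and discarding a spurious root.
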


\begin{proof}
	Assume that $C$ passes through $v$. Then $C$ is singular at $v$ and the singularity of $X$ at $p$ should be worse than $A_2$ by Theorem \ref{projthm}. The rest of the proof is analogous to Lemma \ref{classofc}.
\end{proof}

In the two propositions below, we find the maximal combinations in the case when $p$ is of $A_n$ type with $n\geq 3$. In the first one we assume that $n$ is even which implies that the singularity at $v$ on $C$ comes from one irreducible component of $C$ (unibranched case). In the second one we assume that $n$ is odd, and thus the singularity at $v$ can come either from one component of $C$ or from the intersection of two components. 

\begin{proposition} \label{A12}
	\hfill
	\begin{itemize}
		\item[(12)] A cubic threefold cannot have singularities of $A_n$ type with $n\geq 12$.
		\item[(10)] If $p$ is of $A_{10}$ type, then it is the only singularity on $X$.
		\item[(8)] If $p$ is of $A_8$ type, then the corresponding maximal configuration on $X$ is $A_8+A_2$.
		\item[(6)] If $p$ is of $A_6$ type, then the corresponding maximal configuration on $X$ is $A_6+A_4$.
		\item[(4)] If $p$ is of $A_4$ type, then the corresponding maximal configurations on $X$ are $2A_4+A_2$ and $A_4+2A_3$.
	\end{itemize}
\end{proposition}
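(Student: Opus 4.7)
The strategy is to apply the projection method of Section \ref{projection} to the strict transform $\tilde C \subset \mathbb{F}_2$ of the base curve $C = Q \cap S$. By Lemma \ref{classofc}, $\tilde C$ has class $2\sigma+6f$, arithmetic genus $3$, and carries an $A_{n-4}$ singularity at the preimage $\tilde v \in \sigma$ of the vertex of $Q$; moreover $\tilde C \cdot \sigma = 2$, concentrated at $\tilde v$. By Theorem \ref{projthm}, the remaining singularities of $X$ correspond one-to-one to the remaining singularities of $\tilde C$ (at smooth points of $\mathbb{F}_2$), and each is of type $A_k$ with $k\leq n$ because $p$ is the worst singularity.

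The key reduction is that for $n \in \{6,8,10,12\}$ the curve $\tilde C$ must be irreducible. Indeed, any component $C_1$ containing $\tilde v$ has $C_1 \cdot \sigma \geq 2$ (since $\tilde C$ has multiplicity $2$ at $\tilde v$), which for $[C_1]=a\sigma+bf$ forces $b \geq 2a+2$; and $C_1$ must accommodate the $A_{n-4}$ singularity of $\delta$-invariant $\lfloor (n-3)/2 \rfloor \geq 1$, which via $g_a(a\sigma+bf) = (a-1)b - a^2 + 1$ forces $a=2,\,b=6$, i.e., $C_1 = \tilde C$. Combined with $\sum_q \delta_q = 3 - g(\tilde C^\nu) \leq 3$, this yields: $n=12$ is impossible ($\delta_{\tilde v} = 4$); $n=10$ saturates the bound, so $\tilde C$ is rational with no further singularities; $n=8$ leaves extra $\delta\leq 1$, giving maximum $A_8+A_2$; $n=6$ leaves extra $\delta\leq 2$, giving maximum $A_6+A_4$. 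Several arithmetically compatible candidates (notably $A_6+2A_2$) survive this step and must be excluded geometrically: because $\tilde C \cdot f = 2$, the ruling of $\mathbb{F}_2$ exhibits $\tilde C$ as a degree $2$ cover of $\mathbb{P}^1$, and Riemann--Hurwitz applied to the normalization $\tilde C^\nu \to \mathbb{P}^1$ gives total ramification $2g(\tilde C^\nu)+2$. A local analysis shows that each $A_{2k}$ cusp with $k\geq 1$ contributes $1$ to this sum (the cusp tangent is forced to be transverse to the fiber by the global degree-$2$ constraint), while bibranched $A_{\mathrm{odd}}$ singularities contribute $0$; this uniformly rules out the spurious three-cusp configurations.

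For $n=4$, $\delta_{\tilde v}=0$ and the irreducibility argument weakens, so I would enumerate all component decompositions compatible with $\tilde C$ being tangent to $\sigma$ at $\tilde v$. The only nontrivial splitting is $\tilde C=(\sigma+4f)\cup(\sigma+2f)$, a pair of smooth rational sections meeting with total intersection number $4$ (the class $\sigma+5f$ leaves remainder $\sigma+f$ which is not effective off $\sigma$, and $\sigma+6f$ leaves $\sigma$ itself which is the excluded exceptional divisor). The irreducible case with $[\tilde C]=2\sigma+6f$ delivers the maximum $2A_4+A_2$ (one extra $A_4$ cusp and one extra $A_2$ cusp, with ramification contributions summing correctly to $2$). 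The reducible case delivers the second maximum $A_4+2A_3$: neither section is singular, so all extra singularities come from intersections, which are of odd $A$-type, and two tangencies of order $2$ saturate the intersection number $4$ to produce $2A_3$ on $\tilde C$. Other reducible configurations (such as $A_4+4A_1$ or $A_4+A_3+2A_1$) are deformations of $A_4+2A_3$ and hence submaximal, while $A_4+A_{2k+1}$ for $k\geq 2$ would violate $p$ being the worst singularity.

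Realizability of each max configuration follows from Theorem \ref{F:DPW}, since all totals satisfy $\mu\leq 10<15$; one explicit example per configuration suffices. An $A_{10}$ cubic is constructed by choosing $(Q,S)$ so that $C$ has $A_8$ at $v$, and the local adjacencies $A_{10}\rightsquigarrow A_8+A_2$ and $A_{10}\rightsquigarrow A_6+A_4$ realize the max configurations for $n=8,6$. The two $n=4$ maxima are realized from the two splittings above by choosing suitably tangent sections of $\mathbb{F}_2$. The main obstacle is Step~3: the numerics on $\mathbb{F}_2$ alone admit several spurious candidates that the proposition excludes, and the key geometric input---the Riemann--Hurwitz bound on cusps of the double cover---is what distinguishes the actually realized configurations from the arithmetically admissible ones.
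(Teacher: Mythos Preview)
Your overall strategy matches the paper's: project from $p$, pass to $\tilde C\in|2\sigma+6f|$ on $\mathbb{F}_2$, and control the extra singularities by the $\delta$-invariant against $g_a(\tilde C)=3$. Your irreducibility argument for $n\in\{6,8,10,12\}$ is a clean repackaging of the paper's component enumeration, and your direct Riemann--Hurwitz bound on the degree-$2$ map $\tilde C^\nu\to\mathbb{P}^1$ induced by the ruling---used to kill $A_6+2A_2$ and the other three-cusp candidates---is exactly the mechanism behind Proposition~\ref{a33a2}, which the paper invokes only after first deforming $A_6+2A_2$ to $A_3+3A_2$. Applying it in situ as you do is more transparent, and it also lets you bypass the lattice input (Corollary~\ref{ADE_bound}) that the paper uses for $n\geq 15$ but itself flags as unnecessary.

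There is, however, a real error in your realizability step. The adjacencies $A_{10}\rightsquigarrow A_8+A_2$ and $A_{10}\rightsquigarrow A_6+A_4$ do not exist: both targets have total Milnor number $10=\mu(A_{10})$, and by Theorem~\ref{Gr} an $A_n$ singularity deforms to $A_i+A_j$ only when $i+j\leq n-1$, since one must delete a vertex of the $A_n$ diagram to disconnect it. The paper instead deforms from $A_{11}$ (Example~\ref{A11}), which does give $A_{11}\rightsquigarrow A_{10}$, $A_8+A_2$, and $A_6+A_4$; for the two $n=4$ maxima it deforms from $A_7+A_4$ (supplied by Theorem~\ref{1-sym-ss}), obtaining $2A_4+A_2$ via $A_7\rightsquigarrow A_4+A_2$ and $A_4+2A_3$ via $A_7\rightsquigarrow 2A_3$. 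Your direct $\mathbb{F}_2$ constructions are plausible but not justified as written; replacing your deformation sources with $A_{11}$ and $A_7+A_4$ closes the gap immediately.
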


\begin{proof}
	By Corollary \ref{ADE_bound}, a cubic threefold cannot have an $A_n$ singularity with $n\geq 15$. If it has an $A_n$ singularity with $15>n>12$, then it deforms to a cubic threefold with an $A_{12}$ singularity by Theorems \ref{Gr} and \ref{F:DPW}. Now assume $X$ has an $A_{12}$ singularity. By Theorem \ref{projthm}, an $A_{12}$ singularity at $p$ on $X$ gives an $A_{10}$ singularity at $v$ on $C$. Denote the irreducible component of $C$ containing $v$ by $A$. By Proposition \ref{gdrop}, the arithmetic genus of $A$ should be at least 5. If $A$ is the only irreducible component of $C$, then we get a contradiction since $g_a(C)=4$. If $C=A\cup B$, then we have $g_a(C)=g_a(A)+g_a(B)+A.B-1$. Consider the strict transforms $\Ab$ and $\Bb$ of $A$ and $B$. By Lemma \ref{classofc}, $\Cb.\sigma=(2\sigma+6f).\sigma=2$ and we get $\Ab.\sigma=2$, $\Bb.\sigma=0$ since $v\notin B$. There are two possibilities: either $[\Ab]=2f$ and $[\Bb]=2\sigma+4f$, or $[\Ab]=\sigma+4f$ and $[\Bb]=\sigma+2f$. In the first case, $\Ab$ is reducible. In the second case, $g_a(\Ab)=0$ and $g_a(A)=g_a(\Ab)+1<5$. Contradiction. Notice that instead of using Corollary \ref{ADE_bound} one can generalize the geometric argument for $A_{12}$ to $A_n$ with $n>12$.
		
	\smallskip
	
	We can use the same reasoning as above to show that $C$ cannot be reducible in parts (10), (8), and (6) of the proposition. By Theorem \ref{projthm} and Proposition \ref{gdrop}, we get that the corresponding maximal configurations on $X$ are $A_{10}$, $A_8+A_2$, $A_6+A_4$ and $A_6+2A_2$. However, a constellation of $A_6+2A_2$ type is not possible on a cubic threefold because it deforms to $A_3+3A_2$ by Theorems \ref{Gr} and \ref{F:DPW}, and $A_3+3A_2$ is not possible by Proposition \ref{a33a2}. The rest of the combinations appear on cubic threefold since $A_{10}$, $A_8+A_2$ and $A_6+A_4$ are deformations of $A_{11}$ which is possible by Example \ref{A11}.
	
	\smallskip
	
	Similarly, if $p$ is of $A_4$ type and $C$ is irreducible, we get that the corresponding maximal configuration on $X$ is $2A_4+A_2$. Combinations of type $A_7+A_4$ which appear on cubic threefolds by Theorem \ref{1-sym-ss} can be deformed to $2A_4+A_2$.
	
	\smallskip
	
	If $p$ is of $A_4$ type and $C$ is reducible, we have $[\Ab]=\sigma+4f$, $[\Bb]=\sigma+2f$, $g_a(A)=g_a(\Ab)+1=1$, $g_a(B)=1$. In this case, both $A$ and $B$ are irreducible, $A$ has one $A_2$ singularity and $B$ can possibly have an $A_2$ or an $A_1$ singularity by Proposition \ref{gdrop}. Since $A.B=(\sigma+4f).(\sigma+2f)=4$, we can get $2A_3$, $A_3+2A_1$ or $4A_1$ singularities by intersecting $A$ and $B$. The corresponding maximal case on $X$ is $A_4+2A_3$ which is also a deformation of $A_7+A_4$.
\end{proof}

\begin{proposition} \label{A_odd}
	\hfill
	\begin{itemize}
		\item[(11)] If $p$ is of $A_{11}$ type then it is the only singularity on $X$.
		\item[(9)] If $p$ is of $A_{9}$ type then the corresponding maximal configuration on $X$ is $A_9+A_1$.
		\item[(7)] If $p$ is of $A_7$ type then the corresponding maximal configuration on $X$ is $A_7+A_4$.
		\item[(5)] If $p$ is of $A_5$ type then the corresponding maximal configuration on $X$ is $2A_5+A_1$.
	\end{itemize}
\end{proposition}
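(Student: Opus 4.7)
The proof will closely parallel Proposition \ref{A12}: for each odd $n \in \{11,9,7,5\}$ I study the geometry of $C$ near the vertex $v$ of $Q$ by classifying the possibilities in $\Pic\bbF_2$, use the genus formula together with Proposition \ref{gdrop} to constrain the residual singularities of $C$, and translate the result via Theorem \ref{projthm} into a list of singularities of $X$ away from $p$. The new feature in the odd case is that the $A_{n-2}$ singularity of $C$ at $v$ is analytically reducible into two smooth branches meeting with contact $(n-1)/2$, so I would distinguish two settings: Case~A, where the two branches belong to a single irreducible component $A$ of $C$, and Case~B, where $C = A \cup B$ with one branch on each component.

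In Case~A the argument transcribes almost verbatim from Proposition \ref{A12}. Proposition \ref{gdrop} yields $g_a(A) \geq (n-1)/2$, so for $n = 11$ no such $C$ exists (an irreducible $C$ would have geometric genus $-1$, and the only numerical reducible option $[\tilde A] = \sigma + 4f$, $[\tilde B] = \sigma + 2f$ gives $g_a(\tilde A) = 0$, hence $g_a(A) \leq 1 < 5$), while for $n = 9, 7, 5$ the same argument forces $C$ irreducible with residual $\delta$-budget $4 - (n-1)/2$ available for further singularities. In Case~B the conditions $\tilde A \cdot \sigma = \tilde B \cdot \sigma = 1$ together with $[\tilde A] + [\tilde B] = 2\sigma + 6f$ leave only $\{[\tilde A],[\tilde B]\} = \{f,\, 2\sigma + 5f\}$ or $\{\sigma + 3f,\, \sigma + 3f\}$; adjunction on $\bbF_2$ then gives $g_a$ of each component and bounds the singularities it may carry (for instance a quintic component with $g_a = 2$ may carry an $A_4$, which is what produces the $A_4$ summand in $A_7 + A_4$), while computing $\tilde A \cdot \tilde B$ on $\bbF_2$ and subtracting the contribution absorbed at the common tangency point on $\sigma$ counts the extra transverse intersections of $A$ and $B$ away from $v$, each of which gives an additional $A_1$ singularity on $X$ by Theorem \ref{projthm}. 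Collecting the contributions of Cases~A and~B (and, for $n = 5$, further allowing $C$ to break into more than two components) yields the claimed maximal configurations $A_{11}$, $A_9 + A_1$, $A_7 + A_4$, and $2A_5 + A_1$.

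Realizability of these maxima is handled by Example \ref{A11} (an explicit cubic with an $A_{11}$ singularity, deforming to $A_9 + A_1$ via Theorems \ref{Gr} and \ref{F:DPW}) and by Theorem \ref{1-sym-ss} of the appendix (producing cubic threefolds with $A_7 + A_4$ and with $2A_5 + A_1$), and maximality follows from the Case~A/B analysis above together with the lattice obstruction of Corollary \ref{ADE_bound}. The main obstacle I anticipate is the local bookkeeping in Case~B: because $v$ is a singular point of $Q$, the intersection multiplicity of two smooth tangent branches at $v$ does not match the intersection multiplicity of their proper transforms at the common tangency point on $\sigma$ by the standard ``contact drops by one'' rule, and a careful local computation on the minimal resolution $\bbF_2 \to Q$ is required to show that precisely $(n-3)/2$ of the intersection $\tilde A \cdot \tilde B$ is absorbed on $\sigma$, so that the remaining off-$\sigma$ contributions correctly account for the extra $A_1$ singularities on $X$.
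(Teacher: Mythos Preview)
Your outline is essentially the paper's own argument: the paper also splits into the unibranched case (your Case~A) and the two-branch case (your Case~B), enumerates the same numerical classes $\{f,\,2\sigma+5f\}$ and $\{\sigma+3f,\,\sigma+3f\}$ in $\Pic\bbF_2$, and uses the same realizability inputs (Example~\ref{A11} for $A_{11}$ and $A_9+A_1$, Theorem~\ref{1-sym-ss} for $A_7+A_4$ and $2A_5+A_1$). Your local bookkeeping formula, that the strict transforms $\tilde A,\tilde B$ meet on $\sigma$ with multiplicity $(n-3)/2$, is exactly what the paper uses, though the paper phrases it as ``$A_{n-2}$ on $C$ becomes $A_{n-4}$ on $\tilde C$'' via Claim~\ref{Anblowup} and Lemma~\ref{classofc} rather than as a contact-drop computation.

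There is, however, one genuine gap in your $n=7$ analysis. In Case~B with the splitting $\{f,\,2\sigma+5f\}$, the quintic component has $g_a=2$ and so by Proposition~\ref{gdrop} may carry not only an $A_4$ but also $2A_2$; this produces the candidate configuration $A_7+2A_2$ on $X$. This configuration has the same total Milnor number as $A_7+A_4$ and is \emph{not} a deformation of it (by Theorem~\ref{Gr}, since $2A_2$ is not an induced subgraph of the $A_4$ diagram), so neither your Case~A/B bounds nor Corollary~\ref{ADE_bound} rule it out. The paper closes this gap with an extra step you have omitted: if $A_7+2A_2$ occurred, then by Theorems~\ref{Gr} and~\ref{F:DPW} it would deform within the family of cubic threefolds to $A_6+2A_2$, which is already excluded in Proposition~\ref{A12} (that exclusion in turn rests on the nontrivial Proposition~\ref{a33a2}). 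You should insert this argument; without it, your Case~B analysis for $n=7$ yields two apparently maximal configurations rather than the single one claimed.
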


\begin{proof}
	\hfill
	\begin{itemize}
	\item[(11)] By Theorem \ref{projthm}, an $A_{11}$ singularity at $p$ on $X$ gives an $A_9$ singularity at $v$ on $C$. A singularity of $A_{9}$ type can either come from one irreducible component of $C$ or from intersecting two components. Using the same argument as in the $A_{12}$ case in Proposition \ref{A12}, we can show that $A_9$ cannot come from one component.
	
	\smallskip
	
	If $A_9$ comes from the intersection of two components $A$ and $B$, we have $\Ab.\sigma=1$ and $\Bb.\sigma=1$. Then either $[\Ab]=f$ and $[\Bb]=2\sigma+5f$, or $[\Ab]=\sigma+3f$ and $[\Bb]=\sigma+3f$. In the first case we have $\Ab.\Bb=2$, in the second case we have $\Ab.\Bb=4$. However $\Ab.\Bb$ should be at least 4 since we have an $A_7$ singularity on $\Cb$ and thus the first case is not possible.
	
	\smallskip
	
	If $\Ab$ is reducible, then $\Ab=\Ab'\cup\Ab''$, $[\Ab']=f$ and $[\Ab'']=\sigma+2f$. This situation is impossible because it is the same as the first case from the previous paragraph after relabeling the components. For the same reason, $\Bb$ cannot be reducible. By the genus formula, $g_a(\sigma+3f)=0$. Since $g_a(A)=g_a(\Ab)=0$ and $g_a(B)=g_a(\Bb)=0$, both $A$ and $B$ are smooth and $v$ is the only singularity on $C$, which implies that $p$ is the only singularity on $X$.
	
	\smallskip
	
	Example \ref{A11} shows that $A_{11}$ appears on a cubic threefold.
	
	\smallskip
	
	\item[(9)] By Theorem \ref{projthm}, an $A_9$ singularity at $p$ on $X$ gives an $A_7$ singularity at $v$ on $C$. A singularity of $A_7$ type can either come from one irreducible component of $C$ or from intersecting two components. By Proposition \ref{gdrop}, if $C$ is irreducible then $v$ is its only singularity. Now assume that $C$ is reducible and $A_7$ comes from an irreducible component $A$ of $C$. In this case we have $[\Ab]=\sigma+4f$ and $g_a(\Ab)=0$. Then $g_a(A)=g_a(\Ab)+1=1$ and $A$ cannot have an $A_7$ singularity by Proposition \ref{gdrop}.
	
	\smallskip
	
	If $A_7$ comes from the intersection of two components $A$ and $B$, then, similarly to part (11), $[\Ab]=[\Bb]=\sigma+3f$ and both $A$ and $B$ are smooth. Since $\Ab.\Bb=4$ and an $A_7$ singularity at $v$ on $C$ corresponds to a triple intersection on $\Cb$, we have an extra $A_1$ singularity on $C$. Thus we get an $A_7+A_1$ configuration of singularities on $C$ and an $A_9+A_1$ configuration on $X$ by Theorem \ref{projthm}.
	
	\smallskip
	
	An $A_{11}$ singularity can be deformed to $A_9+A_1$, and thus $A_9+A_1$ appears on cubic threefolds.
	
	\smallskip
	
	\item[(7)] By Theorem \ref{projthm}, an $A_7$ singularity at $p$ on $X$ gives an $A_5$ singularity at $v$ on $C$. A singularity of $A_5$ type can either come from one irreducible component of $C$ or from intersecting two components. By Proposition \ref{gdrop}, if $C$ is irreducible, then we have an $A_7$ and possibly one additional $A_2$ or $A_1$ singularity on $X$. Similarly to part (9), $A_5$ cannot come from one irreducible component if $C$ is reducible.
	
	\smallskip
	
	Now assume that $A_5$ comes from the intersection of two components $A$ and $B$. Then either $[\Ab]=f$ and $[\Bb]=2\sigma+5f$, or $[\Ab]=\sigma+3f$ and $[\Bb]=\sigma+3f$.
	
	\smallskip
	
	If $[\Ab]=f$, $[\Bb]=2\sigma+5f$, then $g_a(\Ab)=0$, $g_a(\Bb)=2$. If $\Bb$ is irreducible, then we can get an additional $A_i$ singularity with $i\leq 4$ on $\Bb$ or one of the configurations $2A_2$, $A_2+A_1$, $2A_1$. However, an $A_7+2A_2$ configuration cannot appear on $X$ because it deforms to $A_6+2A_2$ by Theorem \ref{Gr}, and $A_6+2A_2$ is not possible by Proposition \ref{A12}. If $\Bb$ is reducible, then $[\Bb]$ splits as $(\sigma+3f)+(\sigma+2f)$ or $f+(2\sigma+4f)$. In the first case we get $\Ab.(\sigma+3f)=1$, in the second case we get $\Ab.f=0$. Since an $A_5$ singularity at $v$ on $C$ corresponds to a double intersection on $\Cb$, we get a contradiction.
	
	\smallskip
	
	If $[\Ab]=\sigma+3f$, $[\Bb]=\sigma+3f$ and both $A$ and $B$ are irreducible, then, analogously to parts (11) and (9), we have either an $A_7+A_3$ or an $A_7+2A_1$ configuration on $X$. If $\Ab$ is reducible, then we can relabel the components and get the $[\Ab]=f$, $[\Bb]=2\sigma+5f$ case which we have already considered.
	
	\smallskip
	
	Combinations of type $A_7+A_4$ appear on cubic threefolds by Theorem \ref{1-sym-ss}.
	
	\smallskip
	
	\item[(5)] By Theorem \ref{projthm}, an $A_5$ singularity at $p$ on $X$ gives an $A_3$ singularity at $v$ on $C$. A singularity of $A_3$ type can either come from one irreducible component of $C$ or from intersecting two components. By Proposition \ref{gdrop}, if $C$ is irreducible then the corresponding maximal configurations on $X$ are $A_5+A_4$ and $A_5+2A_2$. Similarly to parts (7) and (9), $A_3$ cannot come from one irreducible component if $C$ is reducible.
	
	\smallskip
	
	Now assume that $A_3$ comes from the intersection of two components $A$ and $B$. Then either $[\Ab]=f$ and $[\Bb]=2\sigma+5f$, or $[\Ab]=\sigma+3f$ and $[\Bb]=\sigma+3f$.
	
	\smallskip
	
	If $[\Ab]=f$, $[\Bb]=2\sigma+5f$, $g_a(\Ab)=0$, $g_a(\Bb)=2$ and $\Bb$ is irreducible, then we can get and additional $A_i$ singularity with $i\leq 4$ on $\Bb$ or one of the configurations $2A_2$, $A_2+A_1$, $2A_1$. We also get an $A_1$ singularity from intersecting $A$ and $B$. Thus the corresponding maximal configurations on $X$ are $A_5+A_4+A_1$ and $A_5+2A_2+A_1$. If $\Bb$ is reducible, then $[\Bb]$ splits as $(\sigma+3f)+(\sigma+2f)$ or $f+(2\sigma+4f)$. In the first case we have $\Ab.(\sigma+3f)=1$, in the second case we have $\Ab.f=0$. Since the $A_3$ singularity at $v$ on $C$ corresponds to an $A_1$ singularity on $\Cb$, $[\Bb]=(\sigma+3f)+(\sigma+2f)$. We get an $A_3$ singularity on $C$ from intersecting $\Ab$ and $\sigma+3f$ and an $A_1$ singularity from intersecting $\Ab$ and $\sigma+2f$. The intersection number of the two components of $\Bb$ is $(\sigma+3f).(\sigma+2f)=3$ which means that there is an additional $A_5$ singularity on $\Bb$ or one of the configurations $A_3+A_1$, $3A_1$. Thus the corresponding maximal configuration on $X$ is $2A_5+A_1$.
	
	\smallskip
	
	If $[\Ab]=\sigma+3f$, $[\Bb]=\sigma+3f$ and both $A$ and $B$ are irreducible, then, analogously to the previous parts, we have one of the configurations $2A_5$, $A_5+A_3+A_1$, $A_5+3A_1$ on $X$. If $\Ab$ is reducible, then we can relabel the components and get the $[\Ab]=f$, $[\Bb]=2\sigma+5f$ case which we have already considered.
	
	\smallskip
	
	Combinations of type $2A_5+A_1$ appear on cubic threefolds by Theorem \ref{1-sym-ss}. \qedhere
    \end{itemize}
\end{proof}

\begin{proposition} \label{a3max}
	If $p$ is of $A_{3}$ type, then the corresponding maximal configurations on $X$ are $3A_3+A_1$, $2A_3+A_2+2A_1$ and $2A_3+4A_1$.  
\end{proposition}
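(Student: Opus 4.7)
The plan is to analyze the geometry of $\Cb \subset \bbF_2$ via case analysis, in the style of Propositions \ref{A12} and \ref{A_odd}. By Theorem \ref{projthm}, an $A_3$ singularity at $p$ corresponds to an $A_1$ singularity at $v$ on $C$, whose two smooth branches may lie on a single component or on two distinct components meeting transversely at $v$. Lemma \ref{classofc} gives $[\Cb] = 2\sigma + 6f$ and $g_a(\Cb) = 3$, hence $g_a(C) = 4$. Additional singularities of $X$ (each of type $A_k$ with $k \leq 3$) correspond to singularities of $C$ away from $v$ of the same type.

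First I would dispose of the case $C$ irreducible. By Proposition \ref{gdrop}, the total $\delta$-invariant of $C$ is at most $g_a(C) = 4$; with $\delta = 1$ at $v$, at most $\delta = 3$ is available for additional $A_k$ singularities with $k \leq 3$. Enumerating these yields configurations of total Milnor at most $9$, none of which is a new maximum: each strictly deforms from one of the three claimed maxima arising in the reducible case.

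The reducible case is the main content. I would enumerate the decompositions of $[\Cb] = 2\sigma + 6f$ into classes of irreducible components on $\bbF_2$ (which are $f$, $\sigma + nf$ for $n \geq 2$, and $2\sigma + bf$ for $b \geq 4$), and analyze the singularities contributed by each component's own singularities together with pairwise intersections. The decomposition $\Cb = D + F_1 + F_2$ with $[D] = 2\sigma + 4f$ (a bi-section of arithmetic genus $1$) and $F_1, F_2$ distinct fibers --- so that $C$ is a quartic curve off $v$ plus two lines through $v$ --- produces $3A_3 + A_1$ when $D$ has an $A_1$ and intersects each $F_i$ tangentially (the intersection $D.F_i = 2$ concentrated at one point yields an $A_3$), and yields $2A_3 + A_2 + 2A_1$ when the $A_1$ on $D$ is replaced by an $A_2$ and one of the tangencies by a transverse intersection. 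The decomposition $\Cb = C_1 + C_2 + F_1 + F_2$ with $[C_i] = \sigma + 2f$ (two distinct conics not through $v$) and $F_1, F_2$ fibers yields $2A_3 + 4A_1$ when the conic-conic intersection $C_1 . C_2 = 2$ concentrates at one tangency (giving an $A_3$) and the four conic-fiber intersections are transverse $A_1$'s.

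The main obstacle will be verifying completeness: every other splitting of $[\Cb]$ and every other distribution of singularities on components must yield a configuration that is a strict $ADE$-deformation of one of the three maxima. By Theorem \ref{Gr} this reduces to checking induced-subgraph inclusions --- noting that $A_3$ locally deforms to $A_2$, $A_1$, $2A_1$, or smooth, but \emph{not} to $A_2 + A_1$ --- and in particular one must rule out superficially larger candidates such as $3A_3 + A_2$ or $2A_3 + 2A_2$ by showing that the required singularities cannot simultaneously occur on the $(2,3)$ complete intersection $C$. Realisability of each maximum is then guaranteed by Theorem \ref{F:DPW}, since each has $\tau = 10 < 16$, so any local configuration produced by a suitable $(Q, S)$ is actually attained on a cubic threefold.
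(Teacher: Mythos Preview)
Your plan mirrors the paper's argument closely: project from $p$, analyze $\Cb\subset\bbF_2$, enumerate decompositions of $[\Cb]=2\sigma+6f$, and read off the singularities of $X$ via Theorem~\ref{projthm} and Proposition~\ref{gdrop}. Your explicit decompositions realizing $3A_3+A_1$, $2A_3+A_2+2A_1$, and $2A_3+4A_1$ are correct and match (up to relabeling) the cases the paper singles out.

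The genuine gap is exactly the step you flag as ``the main obstacle.'' Saying that $3A_3+A_2$ and $2A_3+2A_2$ must be ruled out ``by showing that the required singularities cannot simultaneously occur on $C$'' is not yet an argument, and nothing in your component-by-component intersection bookkeeping excludes them: the splittings $f+f+(2\sigma+4f)$ and $f+(\sigma+2f)+(\sigma+3f)$ produce, on the face of it, configurations as large as $3A_3+A_2$. The paper's key input here is Proposition~\ref{a33a2}: one first shows, by a Riemann--Hurwitz count on the double cover $\Cb\to\bbP^1$ induced by projection from $v$, that $A_3+3A_2$ is impossible on $X$; then $3A_3+A_2$ and $2A_3+2A_2$ are excluded because each deforms to $A_3+3A_2$ via Theorems~\ref{Gr} and~\ref{F:DPW}. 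Without this (or an equivalent obstruction), your enumeration cannot be completed.

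Two minor points. First, your invocation of Theorem~\ref{F:DPW} for existence is misplaced: that theorem gives surjectivity of global-to-local deformations, not existence. What actually proves existence is either your direct construction of a suitable $(Q,S)$, or---as the paper does---deforming from the already-established configurations $D_5+2A_3$ and $D_6+A_3+2A_1$ of Proposition~\ref{Dn_max}. Second, in the irreducible case the naive maximum from Proposition~\ref{gdrop} is $A_3+3A_2$, not something of Milnor number at most $9$; this case too relies on Proposition~\ref{a33a2} for its exclusion.
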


\begin{proof}
	By Theorem \ref{projthm}, an $A_{3}$ singularity at $p$ on $X$ gives an $A_1$ singularity at $v$ on $C$. By Proposition \ref{gdrop}, if $C$ is irreducible, then the possible maximal combinations on $X$ are $A_3+3A_2$ and $2A_3+A_2$. However, an $A_3+3A_2$ configuration cannot appear on $X$ by Proposition \ref{a33a2}. Thus the the corresponding maximal combinations on $X$ are $A_3+2A_2+A_1$ and $2A_3+A_2$.
	
	\smallskip
	
	Now assume $C$ is reducible. If there is an irreducible component $A$ such that $\Ab.\sigma=2$, then $[\Ab]=\sigma+4f$, $[\Bb]=\sigma+2f$, $g_a(A)=g_a(\Ab)+1=1$, $g_a(B)=g_a(\Bb)=0$, and $B$ is irreducible. We can get $2A_3$, $A_3+2A_1$ or $4A_1$ singularities from intersecting $A$ and $B$. The corresponding maximal configuration on $X$ is $3A_3$.
	
	\smallskip
	
	If $C$ is a union of two components $A$ and $B$ such that $A$ is irreducible, $\Ab.\sigma=1$ and $\Bb.\sigma=1$, then either $[\Ab]=f$ and $[\Bb]=2\sigma+5f$, or $[\Ab]=\sigma+3f$ and $\Bb=\sigma+3f$.

	\smallskip
	
	If $[\Ab]=f$, $[\Bb]=2\sigma+5f$ and $B$ is irreducible, we get an $A_1$ singularity at $v$ and an additional $A_3$ or $2A_1$ combination of singularities from intersecting $A$ and $B$. By Proposition \ref{gdrop}, the possible maximal combinations on $B$ are $2A_2$ and $A_3$. This gives us a $2A_3+2A_2$ or $3A_3$ configuration on $X$.
	However $2A_3+2A_2$ deforms to $A_3+3A_2$ which cannot appear on $X$ by Proposition \ref{a33a2}. Thus the maximal combinations on $X$ we get in this case are $3A_3$, $2A_3 + A_2 + A_1$, and $A_3+2A_2+2A_1$.
	
	\smallskip
	
	If $B$ is reducible, then we have the following options:
	\begin{itemize}
		\item[(i)] $\Ab=f$, $\Bb=f+(2\sigma+4f)$;
		\item[(ii)] $\Ab=f$, $\Bb=(\sigma+2f)+(\sigma+3f)$;
		\item[(iii)] $\Ab=f$, $\Bb=f+(\sigma+2f)+(\sigma+2f)$.
	\end{itemize}
	The tentative maximal configuration on $X$ corresponding to case (i) is $3A_3+A_2$. Indeed, we get can get an $A_3$ from intersecting each $f$ with $2\sigma+4f$, and at worst $A_2$ on $2\sigma+4f$. However, $3A_3+A_2$ deforms to $A_3+3A_2$, thus the remaining maximal cases are $2A_3+A_2+2A_1$ and $3A_3+A_1$.
	
	\smallskip
	
	In case (ii), we have an $A_1$ singularity from intersecting $f$ and $\sigma+2f$ components, another $A_1$ singularity from intersecting $f$ and $\sigma+3f$ and an $A_3+A_1$ or $3A_1$ configuration from intersecting $\sigma+2f$ and $\sigma+3f$. The corresponding maximal combination on $X$ is $2A_3+3A_1$.
	
	\smallskip
	
	In case (iii), we have an $A_1$ singularity from intersecting each $f$ with each of the $\sigma+2f$ components. Additionally, we get an $A_3$ or $2A_1$ from intersecting the two $\sigma+2f$ components. Thus the maximal configuration is $2A_3+4A_1$.
	
	\smallskip
	
	If $[\Ab]=\sigma+3f$, $[\Bb]=\sigma+3f$ and $B$ is irreducible, we get a $2A_3$, $A_3+2A_1$ or $4A_1$ configuration from intersecting $\Ab$ and $\Bb$. The corresponding maximal combination on $X$ is $3A_3$. If $B$ is reducible then we can relabel the components and get the $\Ab=f$, $\Bb=2\sigma+5f$ case which we have already considered.
	
	\smallskip
	
	The combinations from the statement of the proposition can be obtained by deforming an $D_5+2A_3$ configuration (types $3A_3+A_1$ and $2A_3+A_2+2A_1$) or a  $D_6+A_3+2A_1$ configuration (type $2A_3+4A_1$).
\end{proof}

\begin{proposition} \label{a2max}
	If $p$ is of $A_{2}$ type, then the corresponding maximal configurations on $X$ are $5A_2$, $2A_2+4A_1$ and $A_2+6A_1$.
\end{proposition}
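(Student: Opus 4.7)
The plan is to follow Propositions \ref{A12}, \ref{A_odd} and \ref{a3max}, using Lemma \ref{a2} as the starting point. By that lemma, $v\notin C$, $g_a(\Cb)=4$, and $[\Cb] = 3\sigma + 6f = 3H$, where $H = \sigma + 2f$ is the hyperplane class on $\bbF_2$. Since every irreducible component of $C$ misses $v$, its class has zero intersection with $\sigma$ and must be a positive multiple $aH$. This restricts $C$ to one of three decomposition patterns: irreducible of class $3H$; reducible as $2H + H$; or as three components each of class $H$ (the $2H + H$ case with reducible $2H$-part coincides with the latter). Each case will be analyzed in turn, and the singularities of $C$ translated (via Theorem \ref{projthm}, applied in the smooth locus of $Q$ since $v\notin C$) into the corresponding singularities of $X$.

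Because $p$ is of type $A_2$, only $A_1$ and $A_2$ singularities are allowed on $C$; this forces all intersections between distinct components to be transverse nodes, and forbids any triple concurrence (each would create an $A_n$ with $n\geq 3$ or a $D_4$). In the irreducible case the genus-drop relation $g_a(C) - g_{\mathrm{geom}}(C) = \sum \delta_p$, with $\delta(A_1) = \delta(A_2) = 1$, caps the number of singular points at four, so the best scenario is $4A_2$ on $C$, yielding $5A_2$ on $X$. In the $(2H, H)$ case $g_a(C_1) = 1$, $g_a(C_2) = 0$, $C_1\cdot C_2 = 4$: the four intersection points contribute $4A_1$, and $C_1$ may carry at most one extra $A_1$ or $A_2$ away from these intersections, so the maximum is $A_2 + 4A_1$ on $C$ and $2A_2 + 4A_1$ on $X$. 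In the three-line case each $C_i$ is a smooth rational curve with $C_i\cdot C_j = 2$, producing six transverse nodes and thus $6A_1$ on $C$, hence $A_2 + 6A_1$ on $X$.

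The three maxima $5A_2$, $2A_2 + 4A_1$, $A_2 + 6A_1$ will then be shown to be pairwise incomparable under adjacency: by Theorem \ref{Gr}, $A_2$ does not have $2A_1$ as an induced subgraph of its Dynkin diagram, so no deformation converts an $A_2$ into a pair of $A_1$s. Each has total Milnor number at most $10$, so Theorem \ref{F:DPW} applies and existence reduces to exhibiting a single cubic threefold deforming to each. The combination $2A_2 + 4A_1$ arises from $2A_3 + 4A_1$ (Proposition \ref{a3max}) via $A_3 \to A_2$; $A_2 + 6A_1$ arises from $D_6 + A_3 + 2A_1$ (Proposition \ref{Dn_max}) via the subgraph embedding $A_2 + 2A_1 \hookrightarrow D_6$ together with $2A_1 \hookrightarrow A_3$; and $5A_2$ arises from the $E_6 + 2A_2$ configuration established in the corank-2 analysis of Section \ref{sect_cr2}, via the induced subgraph embedding $3A_2 \hookrightarrow E_6$.

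The main obstacle will be the case-by-case bookkeeping in the second step: ruling out the intermediate sub-cases where $C_1$ in the $(2H,H)$ decomposition is smooth or carries only one node (which give $A_2 + 4A_1$ and $A_2 + 5A_1$ on $X$ respectively, both strictly smaller than $2A_2 + 4A_1$), and making sure that every non-transverse intersection or coincidence of a singular point of $C_1$ with an intersection point would create an $A_n$ with $n\geq 3$ (or a $D_4$) and is therefore excluded by our corank-$1$ hypothesis on $p$.
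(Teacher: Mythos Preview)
Your overall strategy matches the paper's: decompose $[\Cb]=3H$ into irreducible pieces on $\bbF_2$, analyse each case, and then appeal to Theorem~\ref{F:DPW} for existence. The case analysis is correct, and your explicit use of the standing assumption that $p$ is the \emph{worst} singularity (so that $C$ can carry only $A_1$ and $A_2$ singularities, forcing all component intersections to be transverse and forbidding triple points) is a welcome clarification that the paper leaves implicit.

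There is, however, a genuine gap in your existence argument for $5A_2$. You claim that $E_6+2A_2$ deforms to $5A_2$ via an induced embedding $3A_2\hookrightarrow E_6$. This embedding does not exist: $3A_2$ and $E_6$ both have six vertices, and the induced subgraph on the full vertex set of $E_6$ is $E_6$ itself, which is connected. (Equivalently, $\mu(3A_2)=\mu(E_6)=6$, so by Theorem~\ref{Gr} a nontrivial adjacency $E_6\to 3A_2$ is impossible.) In fact $5A_2$ is maximal among \emph{all} $ADE$ configurations on cubic threefolds (Theorem~\ref{Theorem1}(1)), so it cannot be obtained by deforming any larger configuration. The paper handles this point by direct construction: Example~\ref{5A2} exhibits a $(2,3)$ complete intersection curve on the quadric cone with four cusps, yielding a cubic threefold with $5A_2$ singularities. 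You need to invoke that example, or provide an equivalent construction, rather than rely on deformation.

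Your existence arguments for $2A_2+4A_1$ (from $2A_3+4A_1$) and $A_2+6A_1$ (from $D_6+A_3+2A_1$) are correct, though the paper instead derives both from the single configuration $2A_3+A_2+2A_1$; either route works.
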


\begin{proof}
	By Lemma \ref{a2}, $C$ does not pass through $v$. By Proposition \ref{gdrop}, if $C$ is irreducible then the corresponding maximal configuration on $X$ is $5A_2$ (see Example \ref{5A2}).
	
	\smallskip
	
	Now assume that $C=A\cup B$ and $A$ is irreducible. Then $[\Ab]=\sigma+2f=H$, $[\Bb]=2H$ where $H$ is as in Remark \ref{H}. If $B$ is irreducible, we have $g_a(\Ab)=0$, $g_a(\Bb)=1$, $\Ab.\Bb=4$. Thus we get $4A_1$ singularities from intersecting $\Ab$ and $\Bb$, and possibly one $A_2$ or $A_1$ singularity on $\Bb$. The corresponding maximal configuration on $X$ is $2A_2+4A_1$. If $B$ is reducible, we have $[\Bb]=H+H$, $g_a(H)=0$. In this case all of the components of $C$ are smooth, and we get $6A_1$ singularities from intersecting these components. The corresponding maximal configuration on $X$ is $A_2+6A_1$.
	
	\smallskip
	
	One can get a cubic threefold with $2A_2+4A_1$ or $A_2+6A_1$ combinations by deforming a threefold with a $2A_3+A_2+2A_1$ combination.  
\end{proof}

\begin{proposition} \label{a33a2}
	A constellation of $A_3+3A_2$ type cannot appear on a cubic threefold.
\end{proposition}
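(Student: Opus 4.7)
The plan is to project from the $A_3$ point and translate the configuration into a constraint on the strict transform $\Cb\subset\bbF_2$ of $C=Q\cap S$. By Lemma \ref{classofc}, $[\Cb]=2\sigma+6f$ and $g_a(\Cb)=3$, and by Theorem \ref{projthm} the three $A_2$ singularities of $X$ lift to three $A_2$ singularities of $\Cb$ in the smooth locus of $\bbF_2$, with no further singularities on $\Cb$.

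The first step is to show that $\Cb$ must be irreducible. An $A_2$ is unibranched, so each cusp lies on a unique irreducible component. Using the genus formula $g_a(a\sigma+bf)=-a^2+(a-1)b+1$, the available irreducible classes are: fibers $f$ and sections $\sigma+kf$ with $k\ge 2$, all of arithmetic genus $0$ (hence smooth, carrying no cusps); and classes $2\sigma+kf$ with $k\ge 4$, of arithmetic genus $k-3$. Since $[\Cb]$ has $\sigma$-coefficient $2$, at most one component can be of class $2\sigma+kf$, and it must then host all three cusps, forcing $k-3\ge 3$. Combined with $[\Cb]=2\sigma+6f$, this makes $\Cb$ irreducible of class $2\sigma+6f$.

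The main step is to rule out this irreducible case. Since $p_g(\Cb)=g_a(\Cb)-\sum\delta=3-3=0$, the normalization is $\nu:\bbP^1\to\Cb$. The fibration $\bbF_2\to\bbP^1$ restricts to a morphism of degree $\Cb\cdot f=2$, and composing with $\nu$ yields a degree $2$ cover $\bbP^1\to\bbP^1$, which by Hurwitz has exactly two ramification points. At each cusp $q\in\Cb$, the fiber through $q$ cannot be tangent to the cusp: a tangent fiber would contribute local intersection multiplicity $3$ with $\Cb$ and violate $\Cb\cdot f=2$. Hence, locally, if $\nu$ is the standard parametrization $t\mapsto(t^2,t^3)$ of the cusp, the restriction of the fibration reads $t\mapsto t^2+\text{higher order}$ and is ramified of index $2$ at the preimage of $q$. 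The three cusps thus produce three ramification points, contradicting Hurwitz.

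The only non-routine ingredient is the transversality statement that the fiber direction cannot coincide with the cusp tangent, but this is immediate from the degree constraint $\Cb\cdot f=2$, so I expect a short and self-contained proof along these lines.
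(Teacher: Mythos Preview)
Your argument is correct and follows essentially the same route as the paper's proof: project from the $A_3$ point, reduce to $\Cb$ irreducible of class $2\sigma+6f$ with exactly three cusps, pass to the normalization $\bbP^1$, and derive a Riemann--Hurwitz contradiction for the degree-$2$ map to the base $\bbP^1$ of the ruling. The only cosmetic differences are that the paper obtains irreducibility by pointing back to the case analysis in Proposition~\ref{a3max} rather than redoing the component bookkeeping on $\bbF_2$, and it phrases the degree-$2$ map as projection from the cone vertex $v$ rather than as the restriction of the $\bbF_2$ fibration; your explicit transversality check ($\Cb\cdot f=2$ forbids the fiber from being the cuspidal tangent) makes the ramification claim slightly more self-contained than in the paper.
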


\begin{proof}
	Assume it is possible and let $p\in X$ be the $A_3$ singularity. It follows from the proof of Proposition \ref{a3max} that the corresponding curve $C$ is irreducible. Consider a smooth hyperplane section $L$ of the quadric cone $Q$. It is isomorphic to $\bbP^1$. Projecting from $v\in C$, we get a branched double cover $\eta: \Cb\to L$. By Theorem \ref{projthm}, $C$ has $3A_2+A_1$ singularities and $\Cb$ has $3A_2$ singularities. Consider the normalization $\nu: C^{\nu}\to\Cb$. The curve $C^\nu$ is isomorphic to $\bbP^1$ by Lemma \ref{classofc} and Proposition \ref{gdrop}. The double cover $\eta\circ\nu: C^{\nu}\to L$ has at least 3 branch points corresponding to the $A_2$ singularities. Thus, by the Riemann-Hurwitz formula, $\chi(C^{\nu})\leq 2\chi(L)-3$ and we get $2\leq 4-3=1$. Contradiction.
\end{proof}

\begin{example} \label{A11}
	By an example of Allcock (\cite{allcock1}), there are cubic threefolds with one $A_{11}$ singularity. For instance, such a threefold can be given by the following equation: 
	$$
	f=x_0(x_3^2-x_2x_4)+x_2^3+x_1^2x_4-2x_1x_2x_3+x_4^3.
	$$
\end{example}

\begin{example} \label{5A2}
	By Theorem 3.7 in \cite{Moe}, there exists a curve $\tilde{C}$ on the Hirzebruch surface $\bbF_2$ with four $A_2$ singularities such that $[\tilde{C}]=3\sigma+6f$. After blowing down, the corresponding curve $C$ on the quadric cone $Q$ becomes a $(2,3)$ complete intersection curve in $\bbP^3$ with four cusps. Thus $C$ and $Q$ define a cubic threefold with $5A_2$ singularities.
\end{example}

After combining the statements of Propositions \ref{a1max}, \ref{A12}, \ref{A_odd}, \ref{a3max}, \ref{a2max} and applying Theorems \ref{Gr} and \ref{F:DPW}, we get the following theorem:

\begin{theorem} \label{An_max}
	Among the configurations of singularities only containing $A_n$ singularities, the maximal ones are $A_{11}$, $A_7+A_4$, $2A_5+A_1$, $3A_3+A_1$, $2A_3+A_2+2A_1$, $2A_3+4A_1$, $5A_2$ and $10A_1$. The list of all of the possible constellations of $A_n$ singularities consists of 109 cases and is given in Table \ref{Ta:anlist}.
\end{theorem}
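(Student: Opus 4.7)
The strategy is to assemble the preceding propositions of this section and then to identify, among the many candidate maximal configurations they produce, those that are globally maximal under the deformation order given by Theorem \ref{Gr}. Propositions \ref{a1max}, \ref{A12}, \ref{A_odd}, \ref{a3max}, and \ref{a2max} analyze the situation case by case according to the type of the worst $A_n$ singularity on $X$, each producing a finite list of candidate maxima within its case. Proposition \ref{a33a2} additionally excludes $A_3+3A_2$. The first step is therefore to write down this combined list and discard any configuration that is a proper deformation of another entry.

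For the comparison step I would apply the Brieskorn--Grothendieck--Slodowy criterion (Theorem \ref{Gr}) pairwise. For example, $A_{10}$, $A_9+A_1$, $A_8+A_2$, and $A_6+A_4$ all embed as induced subgraphs of the chain $A_{11}$; $2A_4+A_2$ and $A_4+2A_3$ embed into $A_7+A_4$ by splitting the $A_7$ chain appropriately together with the full $A_4$ component; and $2A_2+4A_1$, $A_2+6A_1$ embed into $5A_2$. On the other hand, a short arithmetic check (comparing the total vertex count plus required separators against the actual length of each chain) shows that the eight configurations $A_{11}$, $A_7+A_4$, $2A_5+A_1$, $3A_3+A_1$, $2A_3+A_2+2A_1$, $2A_3+4A_1$, $5A_2$, and $10A_1$ are pairwise incomparable as induced subgraphs. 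This proves the first assertion of the theorem.

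For the second assertion I would invoke Theorem \ref{F:DPW}: each of the eight maximal configurations has total Milnor number at most $11$, well below the threshold $15$, so the global deformations of a cubic threefold with such a configuration induce a simultaneous versal deformation of its singularities. Combined with Theorem \ref{Gr}, this shows that a constellation of $A_n$ singularities is realized on some cubic threefold if and only if its Dynkin diagram embeds as an induced subgraph of one of the eight maximal diagrams. It remains to enumerate all such induced subgraphs, using the observation that the induced subgraphs of $A_n$ are disjoint unions of $A_{k_i}$'s indexed by compositions of $n+1$. The main obstacle is the careful bookkeeping required to avoid double-counting across the eight lists; carrying out this enumeration yields exactly the $109$ constellations tabulated in Table \ref{Ta:anlist}.
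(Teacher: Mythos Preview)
Your overall strategy coincides with the paper's: combine Propositions \ref{a1max}, \ref{A12}, \ref{A_odd}, \ref{a3max}, \ref{a2max} (and the exclusion in Proposition \ref{a33a2}), then use Theorems \ref{Gr} and \ref{F:DPW} to compare and enumerate. That part is fine.

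However, one concrete step in your comparison is wrong. You assert that $2A_2+4A_1$ and $A_2+6A_1$ embed as induced subgraphs of $5A_2$. They do not: the Dynkin diagram $5A_2$ consists of five disjoint edges, so any induced subgraph is of the form $kA_2+mA_1$ with $k+m\le 5$; in particular you can realize at most $3A_1$ once two full $A_2$ components are kept, and at most $4A_1$ once one $A_2$ is kept. The correct target is $2A_3+A_2+2A_1$: splitting one $A_3$ as $A_2$ and the other as $2A_1$ (by removing an endpoint, respectively the middle vertex) yields $2A_2+4A_1$, and splitting both $A_3$'s as $2A_1$ yields $A_2+6A_1$. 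This is exactly how the paper handles it at the end of the proof of Proposition \ref{a2max}. With this correction your argument goes through; without it, the reduction to the eight maximal configurations is incomplete, since $2A_2+4A_1$ and $A_2+6A_1$ are not accounted for.
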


The theorem above is part (2) of Theorem \ref{Theorem1}. Together with the results of Section \ref{ADEsection} (see Table \ref{crk2jnot0}) as well as Theorems \ref{Gr} and \ref{F:DPW}, it implies part (1) of Theorem \ref{Theorem1}.

\section{Combinatorial description of the $ADE$ configurations} \label{sect_comb}

In this section, we present a graph $\Gamma$ whose induced subgraphs (see Definition \ref{induced}) correspond to combinations of $ADE$ singularities on cubic threefolds. We construct $\Gamma$ by adding vertices and edges to the union of three $D_4$ Dynkin diagram. First, we get a graph $\Delta$ by adding one vertex and three edges to $3D_4$ (Figure~\ref{3D4}). However, $\Delta$ does not contain $A_{11}$ or $E_8$ as subgraphs, and the corresponding singularities occur on cubic threefolds. After adding two more vertices and six more edges to $\Delta$, we obtain $\Gamma$ (Figure \ref{gamma1}). Table~\ref{Maximal configurations} shows which vertices we need to remove from $\Gamma$ to get most of the maximal $ADE$ combinations. The only two maximal combinations that we do not get from $\Gamma$ are $5A_2$ and $10A_1$ (but we do get the combinations $4A_2+A_1$ and $9A_1$).

\begin{figure}
	\begin{center}
		\includestandalone{3D4}
		\bigskip
		\caption{Graph $\Delta$}
		\label{3D4}
	\end{center}
\end{figure}      

\begin{figure}[htb]
	\begin{center}
		\includestandalone{Gamma_labeled}
		\caption{Graph $\Gamma$} \label{gamma1}
	\end{center}
\end{figure}

\renewcommand{\arraystretch}{1.25}

\begin{table}[h]
	\begin{center}
		\begin{tabular}{|c | c | c|}\hline
			Maximal configuration & Points to remove from $\Gamma$ \\ \hline \hline
			$E_8+A_2$& 3, 8, 9, 11, 15 \\ \hline
			$E_7+A_2+A_1$& 3, 4, 8, 9, 15 \\ \hline
			$E_6+2A_2$& 3, 8, 9, 14, 15 \\ \hline
			$D_8+A_3$& 1, 3, 11, 15 \\ \hline
			$D_6+A_3+2A_1$& 1, 3, 4, 15 \\ \hline
			$D_5+2A_3$& 1, 3, 14, 15 \\ \hline
			$3D_4$& 1, 3, 5  \\ \hline
			$A_{11}$& 3, 8, 12, 13 \\ \hline
			$A_7+A_4$& 3, 9, 13, 14  \\ \hline
			$2A_5+A_1$& 2, 3, 9, 14 \\ \hline
		\end{tabular}
		\bigskip
		\caption{Maximal $ADE$ configurations and the corresponding induced subgraphs of $\Gamma$} \label{Maximal configurations}
		\vspace*{-1em}
	\end{center}
\end{table}

\renewcommand{\arraystretch}{1}

\begin{manualtheorem}{II} \label{Theorem2}
	An $ADE$ combination of singularities occurs on a cubic threefold if and only if the union of the corresponding Dynkin diagrams is $10A_1$, $5A_2$, or an induced subgraph of the graph $\Gamma$ (Figure \ref{gamma1}).
\end{manualtheorem}

\begin{remark} \label{14vertices}
	Notice that we remove vertex 3 in each of the cases in Table \ref{Maximal configurations}. It means that instead of $\Gamma$ we can consider a graph $\Gamma'$ that is obtained from $\Gamma$ by removing 3. We choose to consider $\Gamma$ because it is more symmetric. In particular, $\Gamma$ is a subdivided version of the $K_{3,3}$ graph. The six vertices 1, 2, 3, 4, 5, 6 have degree three in the graph $\Gamma$, and the remaining nine vertices $7,\ldots,15$ have degree two in $\Gamma$. Each of the degree three vertices can be moved to any other degree three vertex by a symmetry of $\Gamma$, and the same holds for the degree two vertices.
\end{remark}

\begin{lemma} \label{3D4_graph}
	If an induced subgraph of $\Delta$ is a union of $ADE$ graphs, then the corresponding combination of $ADE$ singularities appears on a cubic threefold.
\end{lemma}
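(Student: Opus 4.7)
The plan is to leverage Theorem~\ref{F:DPW} and Theorem~\ref{Gr} to propagate a short list of ``anchor'' cubic threefolds, already shown to exist earlier in the paper, to every $ADE$ induced subgraph of $\Delta$. The candidate anchors are the $ADE$-type maximal configurations of Theorem~\ref{Theorem1}(1) whose Dynkin diagrams embed as induced subgraphs of $\Delta$: namely $3D_4$ (Proposition~\ref{3D4sing}); $D_5+2A_3$, $D_6+A_3+2A_1$, $D_8+A_3$ (Proposition~\ref{Dn_max}); together with $E_6+2A_2$ and $E_7+A_2+A_1$ coming from the nonvanishing-third-jet analysis of corank~$2$ singularities. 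Each anchor has total Milnor (hence Tjurina) number at most $12<16$, so Theorem~\ref{F:DPW} supplies a simultaneous versal deformation of its singularities and Theorem~\ref{Gr} then realizes every induced subgraph of the anchor's Dynkin diagram on some cubic threefold.

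What remains is the purely combinatorial task of checking that every $ADE$ induced subgraph of $\Delta$ is an induced subgraph of the Dynkin diagram of one of these six anchors. I would handle this by casework on the extra vertex $v$ of $\Delta$, the one added to the $3D_4$ core and joined by one edge to a leaf of each $D_4$ block. If $v$ is absent from the subgraph, everything sits inside $3D_4$. If $v$ is included, one isolates the connected component $K$ through $v$. Since $\Delta$ is a tree whose only vertices of degree three are $v$ and the three centers $L_1,L_2,L_3$ of the $D_4$ blocks, the ADE condition forces $K$ to contain at most one branching vertex: either $v$ itself, whose arms in $\Delta$ have length at most $3$ (so $K$ is of type $A_n$, $D_4$, $D_5$, $D_6$, $E_6$, or $E_7$), or some $L_i$, whose arms are bounded by $(1,1,5)$ (so $K$ is of type $D_n$ with $n\leq 8$). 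The remaining components then live in the untouched $D_4$ blocks, after forced exclusions needed to keep them disjoint from $K$; these turn out to be $A_k$- or $D_4$-type pieces that fit inside the appropriate anchor.

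The main obstacle is precisely this matching step. For each choice of $K$ one must check that the forced side components, together with $K$, embed simultaneously inside the Dynkin diagram of a single anchor rather than escaping the list. The verification reduces to tracking, for each $K$, which vertices of $\Delta$ must be excluded to prevent an edge from $K$ merging it with a would-be side component, and then confirming that the residual combination agrees with a subgraph of $3D_4$, $D_5+2A_3$, $D_6+A_3+2A_1$, $D_8+A_3$, $E_6+2A_2$, or $E_7+A_2+A_1$. The process is finite and tractable thanks to the tree structure and the small number of degree-three vertices of $\Delta$.
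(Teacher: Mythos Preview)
Your proposal is correct and follows essentially the same strategy as the paper: identify a short list of anchor configurations known to occur on cubic threefolds, verify that every $ADE$ induced subgraph of $\Delta$ sits inside the Dynkin diagram of some anchor, and conclude via Theorems~\ref{Gr} and~\ref{F:DPW}. The paper's own proof organizes the case analysis by the distance from the central vertex of the nearest removed vertex (cases (1), (2), (3)), whereas you organize it by the branching structure of the component $K$ through the center. Both decompositions are valid and lead to the same finite check.

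One point worth noting: your anchor list is actually more explicit than the paper's. The paper only names $3D_4$, $D_5+2A_3$, $D_6+A_3+2A_1$, and $D_8+A_3$ in case~(1) and dispatches case~(2) with ``the analysis is similar''. But case~(2) produces subgraphs such as $E_7+2A_1$ and $E_6+2A_1$, which are \emph{not} induced subgraphs of any $D$-type anchor (since $E_6$ and $E_7$ cannot be obtained from any $D_n$ by vertex deletion). So the $E$-type anchors $E_6+2A_2$ and $E_7+A_2+A_1$ that you include are genuinely required, and your framing makes this dependency visible where the paper leaves it implicit.
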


\begin{proof}
	First notice that if we remove the central vertex of $\Delta$, we get the $3D_4$ graph. There exists a cubic threefold with $3D_4$ singularities (Proposition \ref{3D4sing}), and thus the induced subgraphs of $3D_4$ correspond to configurations of singularities on cubic threefolds by Theorems \ref{Gr} and \ref{F:DPW}. Now assume that we do not remove the center, and consider the following cases:
	\begin{enumerate}
		\item we remove a vertex that is one edge away from the center;
		\item we remove a vertex that is two edges away from the center and do not remove vertices that are one edge away;
		\item we only remove vertices that are three edges away from the center.
	\end{enumerate}
	In case $(3)$, a resulting graph cannot be of $ADE$ type because it contains the $\Esx$ graph. The possibilities in case $(1)$ are shown in Figure \ref{3D4proof}. We get the $D_5+2A_3$, $D_6+A_3+2A_1$ and $D_8+A_3$ subgraphs, and there are cubic threefolds with the corresponding combinations of singularities (Proposition \ref{Dn_max}). The analysis in case $(2)$ is similar. 
\end{proof}

\tikzset{every picture/.style={line width=0.75pt}}

\begin{figure}
	\begin{center}
		\includestandalone{3D4proof}
		\bigskip
		\caption{$D_5+2A_3$, $D_6+A_3+2A_1$ and $D_8+A_3$ as induced subgraphs of $\Delta$}
		\label{3D4proof}
	\end{center}
\end{figure}

\begin{proof} [Proof of Theorem \ref{Theorem2}]	
	One direction follows from Table \ref{Maximal configurations}. For the other direction, we apply the same strategy as in the proof of Lemma \ref{3D4_graph}. Namely, we consider the possible ways to remove the first few vertices, and after the resulting subgraph is of $ADE$ type, we check that it appears in the classification of singularities. By Theorems \ref{Gr} and \ref{F:DPW}, all of the smaller subgraphs correspond to possible configurations as well. We split the proof into three cases: when we remove at least 2 degree three points, exactly 1 degree three point or no degree three points. The labeling of vertices is as in Figure \ref{gamma1}.
	
	\smallskip
	
	\begin{itemize}
		\item[(1)] Suppose we remove 2 degree three vertices. Without loss of generality, we can say that these vertices are either 1 and 3 or 2 and 3. If we remove 1 and 3, we get the graph $\Delta$ which only contains induced $ADE$ subdiagrams that come from combinations of singularities on cubic threefolds by Lemma \ref{3D4_graph}.
		
		\smallskip
		
		Now assume that we remove vertices 2 and 3. If we remove an additional degree three vertex, then the graph we get is a subgraph of $\Delta$, and we can use Lemma \ref{3D4_graph} again. If we only remove 2 and 3, the resulting graph contains the cycle $(1,8,4,14,5,15,6,9)$. Thus we need to remove $8,9,14$ or $15$ which have degree two in $\Gamma$. Since the picture is symmetric, we can choose to remove 8. Figure \ref{gamma_proof_1} shows the graph we get (the points that have degree three in $\Gamma$ are marked with green). The case by case analysis of this graph is straightforward and can be done similarly to the analysis in the proof of Lemma \ref{3D4_graph}.
		
		\tikzset{every picture/.style={line width=0.75pt}}
		
		\begin{figure}[htb]
			\begin{center}
				\includestandalone{Gamma_proof_1}
				\caption{Theorem \ref{Theorem2}, Part (1) of the proof} \label{gamma_proof_1}
			\end{center}
		\end{figure}
	
		\smallskip

		\item[(2)] Suppose we remove exactly 1 vertex of degree three, say 3. The remaining graph contains three cycles: $(1,7,2,13,5,14,4,8)$, $(1,7,2,13,5,15,6,9)$, and $(1,8,4,14,5,15,6,9)$. To get rid of them, we need to remove at least two vertices. By symmetry, we can assume these vertices are either 7 and 8 or 7 and 14. Both possibilities are shown in Figure \ref{gamma_proof_2}, and their analysis is straightforward (notice that we are not allowed to remove the vertices marked with green here because they have degree three in $\Gamma$).
		
		\tikzset{every picture/.style={line width=0.75pt}}    
		
		\begin{figure}[htb]
			\begin{center}
				\includestandalone{Gamma_proof_2}
				\caption{Theorem \ref{Theorem2}, Part (2) of the proof} \label{gamma_proof_2}
			\end{center}
		\end{figure}
	
		\smallskip
		
		\item[(3)] First we make a few observations. Let $\gamma$ be an induced subgraph of $\Gamma$ which contains the vertices $1,\ldots,6$. Notice that if 1 has degree three in $\gamma$ (i.e. $7,8,9\in\gamma$), then $\gamma$ contains an $\Esx$ subgraph and thus $\gamma$ is not a union of $ADE$ graphs. Same holds if 2, 3, 4, 5 or 6 have degree three in $\gamma$. Thus $\gamma$ contains only vertices of degree two or less. If one of the vertices $7,\ldots,15$ is in $\gamma$, then it is of degree two in $\gamma$ because $1,\ldots,6$ are in $\gamma$. If $1,\ldots,6$ are all of degree two in $\gamma$, then $\gamma$ contains a cycle and is not a union of $ADE$ graphs. Thus there is a vertex, say 1, of degree one. We can assume that $7\in\gamma$ and $8,9\notin\gamma$.
		
		\smallskip
		
		If vertex 2 is of degree one, then 10 and 13 are not in $\gamma$. It implies that $\gamma$ is an induced subgraph of the union of the $A_3$ diagram containing 1, 7, 2 and the cycle $(3,11,4,14,5,15,6,12)$ of length 8. After breaking the cycle, we get an $A_7$ diagram. An $A_7+A_3$ combination of singularities can appear on a cubic threefold.
		
		\smallskip
		
		If 2 is of degree two, without loss of generality, $10\in\gamma$. If 3 is of degree one, then $\gamma$ is contained in the $2A_5$ diagram containing the vertices 1, 7, 2, 10, 3 and 1, 14, 5, 15, 6. A combination of two $A_5$ singularities is possible on cubic threefolds.
		
		\smallskip
		
		If 3 is of degree two, we can assume that $11\in\gamma$. If 4 is of degree one, then $\gamma$ is an induced subgraph of the $A_7+A_3$ diagram containing the vertices 1, 7, 2, 10, 3, 11, 4 and 5, 15, 6.
		
		\smallskip
		
		If 4 is of degree two, then $14,5\in\gamma$. If $15\notin\gamma$, then $\gamma$ is a union of the $A_9$ diagram containing the vertices 1, 7, 2, 10, 3, 11, 4, 14, 5 and the $A_1$ diagram $\{6\}$. If $15\in\gamma$, then $\gamma$ is the $A_{11}$ diagram containing the vertices 1, 7, 2, 10, 3, 11, 4, 14, 5, 15, 6. Both $A_{11}$ and $A_9+A_1$ combinations appear on cubic threefolds. \qedhere
	\end{itemize}
\end{proof}

\begin{remark}
	We would like to point out that one can construct (somewhat artificially) a graph that contains 16 vertices such that its $ADE$ induced subgraphs are in one-to-one correspondence with $ADE$ combinations on cubic threefolds. One proceeds as follows. First consider $\Gamma'$ as in Remark \ref{14vertices}, add one vertex to it and connect this vertex to all vertices of $\Gamma'$ that have degree 3 in $\Gamma$ by a dashed edge. Then add another vertex and connect it to vertices 2, 8, 9, 14, 15 by a dashed edge and to vertex 10 by a regular edge. Since $ADE$ diagrams do not have dashed edges, we only get $10A_1$ and $5A_2$ diagrams as new induced subgraphs. While we do not have a proof, we do not expect that there is a graph with 16 or less vertices without dashed edges such that its $ADE$ induced subgraphs are in one-to-one correspondence with $ADE$ combinations on cubic threefolds.
\end{remark}

\appendix

\section{Singularity theory} \label{SingTheory}

In this paper, we study hypersurface singularities. In particular, it means that locally these singularities are critical points of germs of holomorphic functions.

\medskip

Let $\calO_n$ be the set of holomorphic function-germs in $n$ variables at the point $0\in\bbC^n$:
$$f: (\bbC^n,0)\rightarrow (\bbC,0).$$

\begin{definition}
	A function-germ $f\in\calO_n$ has an \textit{isolated} critical point at zero if there exists a neighbourhood $U\subset \bbC^n$ of zero and a function $g: U\rightarrow \bbC$ representing $f$ such that zero is the unique critical point of $g$.
\end{definition}

\begin{definition} [{\cite{AGLV}, Chapter 1.1.2}]
	Two function-germs in $\calO_n$ are said to be \textit{equivalent} if one is taken to the other by a biholomorphic change of coordinates that keeps zero fixed. We call the equivalence class of a singularity its \textit{singularity type}.
\end{definition}

We now introduce three important invariants of hypersurface singularities (all of which are well-defined for singularity types):

\begin{definition} [{\cite{AGLV}, Chapter 1.1.1}] \label{corank}
	The \textit{corank} of a critical point of a function is the dimension of the kernel of its second differential at the critical point.
\end{definition}

\begin{definition} [{\cite{AGLV}, Chapter 1.1.4}] \label{Milnor}
	Consider the \textit{gradient ideal} $I_{\nabla_f}$ generated by partial derivatives of a function-germ $f$ and let $Q_f=\calO_n/I_{\nabla_f}$ The \textit{Milnor number} $\mu(f)$ is defined as $\dim_\bbC Q_f$. If $X$ is a hypersurface with several isolated singularities, then the \textit{total Milnor number} $\mu(X)$ is the sum of Milnor numbers of all of the singularities.
\end{definition}

\begin{definition} \label{Tjurina}
	With $I_{\nabla_f}$ as in the definition above, let $T_f=\calO_n/(f,I_{\nabla_f})$. The \textit{Tjurina number} $\tau(f)$ is defined as $\dim_\bbC T_f$. If $X$ is a hypersurface with several isolated singularities, then the \textit{total Tjurina number} $\tau(X)$ is the sum of Tjurina numbers of all of the singularities.
\end{definition}

\begin{remark} \label{TjM}
	It follows immediately from the definitions that $\tau(f)\leq\mu(f)$ (and $\tau(X)\leq\mu(X)$). For instance, the equality holds in the $ADE$ case.
\end{remark}

Some of the main results we use in this work (Theorems \ref{Gr} and \ref{F:DPW}) are related to the notion of deformation of singularities:

\begin{definition} [{\cite{AGLV}, Chapter 1.1.9}]
	A \textit{deformation} with base $\bbC^l$ of a function-germ $f(x)$ is the germ at zero of the smooth map $F: (\bbC^n\times\bbC^l)\rightarrow (\bbC,0)$ such that $F(x,0)\equiv f(x)$.
	
	\smallskip
	
	A deformation $F(x,\lambda)$ of a germ $f(x)$ is said to be \textit{versal} if every deformation $F'$ of $f(x)$ can be represented in the form
	$$
	F'(x,\lambda')=F(g(x,\lambda'),\theta(\lambda'))
	$$
	such that $g(x,0)\equiv x$, $\theta(0)=0$.
\end{definition}

\begin{definition} ([{\cite{AGLV}, Chapter 1.2.7}]) \label{adjacent}
	A class of singularities $L$ is said to be \textit{adjacent} to a class $K$, and one writes $L\rightarrow K$, if every function $f\in L$ can be deformed to a function  of class $K$ by an arbitrarily small perturbation.
\end{definition}

\begin{table}[htb]
	$$\xymatrix@R=.25cm@C=.25cm{
		{E_6}&& \ar@{>}[ll]{E_7}&& \ar@{>}[ll]{E_8}&& && &&\\
		&&&&&&&&&&\\
		\ar@{>}[uu]{P_8}&& \ar@{>}[uu]{X_9}&& \ar@{>}[uu]{J_{10}}&& && &&\\
		&&&&&&&&&&\\
		&&\ar@{>}[uull]{T_{334}}&&\ar@{>}[uull]{T_{245}}&&
		\ar@{>}[uull]\ar@{>}[ll]{T_{255}}&& &&\\
		&&&&&&&&&&\\
		&& &&\ar@{>}[uuuull]\ar@{>}[uull]{T_{344}}&&\ar@{>}[uull]\ar@{>}[uuuull]{T_{246}}&& &&\\
		&&&&&&&&&&\\
		&& && &&\ar@{>}[uull]{T_{444}}&&\ar@{>}[uull]\ar@{>}[uuuull]{T_{256}}&&\\
		&& &&\ar@{>}[uuuuull]{Q_{10}}&& && &&\ar@{>}[ull]{T_{266}}\\
		&& && &&\ar@{>}[ull]\ar@{>}[uuuull]\ar@{>}[uuuuuull]{S_{11}}&& && \\
		&& && && &&\ar@{>}[ull]\ar@{>}[uuull]\ar@{>}[uuuuuuull]{U_{12}}&& 
	}$$
	\smallskip
	\caption{Adjacencies of unimodal and $E_n$ singularities}
	\label{unimodal}
	\vspace*{-1em}
\end{table}

Notice that adjacencies of $ADE$ singularities are described by Theorem \ref{Gr} and adjacencies of unimodal singularities are classified by Brieskorn \cite{Brieskorn}. We show some adjacencies of unimodal and $E_n$ singularities in Table \ref{unimodal}.

\medskip

Finally, we will need the following results concerning the corank of singularities on cubic threefolds and $A_n$ singularities:

\begin{claim}
	Le $X\subset \bbP^4$ be a cubic threefold given by an equation $f=x_0f_2(x_1,x_2,x_3,x_4)+f_3(x_1,x_2,x_3,x_4)$. Then the corank of $p=[1:0:0:0:0]\in X$ is equal to the corank of $f_2$. 
\end{claim}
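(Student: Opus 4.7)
The plan is to reduce the computation of the corank at $p$ to a straightforward Hessian computation in the affine chart containing $p$, and then to observe that the cubic part $f_3$ contributes nothing to the second differential at the origin.

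Concretely, I would first dehomogenize. Since $p = [1{:}0{:}0{:}0{:}0]$, work in the affine chart $x_0 \neq 0$ by setting $x_0 = 1$. The local defining equation of $X$ at $p$ becomes
\[
g(x_1,x_2,x_3,x_4) \;=\; f_2(x_1,x_2,x_3,x_4) + f_3(x_1,x_2,x_3,x_4),
\]
which is the germ whose corank, by Definition~\ref{corank}, we need to compute at the origin.

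Next, I would compute the Hessian of $g$ at $0$. Since $f_3$ is homogeneous of degree $3$, every second partial derivative of $f_3$ is homogeneous of degree $1$ and hence vanishes at the origin; consequently $\mathrm{Hess}(g)(0) = \mathrm{Hess}(f_2)(0)$. For the homogeneous quadratic form $f_2$, writing $f_2(x) = \tfrac{1}{2}\,x^{T}Ax$ with $A$ symmetric, one has $\mathrm{Hess}(f_2)(0) = A$, the Gram matrix of $f_2$. Therefore
\[
\dim\ker\bigl(\mathrm{Hess}(g)(0)\bigr) \;=\; \dim\ker A \;=\; \operatorname{corank}(f_2),
\]
which is exactly the equality to be proved. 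No step here is a genuine obstacle; the only subtlety worth stating clearly is the convention that the corank of the quadratic form $f_2$ and the corank of the singularity at $p$ are both defined as the dimension of the kernel of the same $4\times 4$ symmetric matrix, so that the equality is not merely numerical but tautological once the Hessian calculation is made.
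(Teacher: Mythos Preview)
Your argument is correct and is exactly the standard one: in the affine chart $x_0=1$ the germ at the origin is $g=f_2+f_3$, the cubic term contributes nothing to the Hessian at $0$, and the corank of the singularity (Definition~\ref{corank}) is by definition the dimension of the kernel of $\mathrm{Hess}(g)(0)=\mathrm{Hess}(f_2)(0)$, i.e.\ the corank of the quadratic form $f_2$.

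The paper itself states this claim without proof, treating it as an immediate consequence of the definitions; your write-up simply spells out the Hessian computation that is implicit there. There is nothing to compare: your approach is the intended one.
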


\begin{claim} \label{crk1}
	The corank of $f_2$ is equal to one if and only if $p$ is of $A_r$ type with $r>1$. The corank of $f_2$ is equal to zero if and only if $p$ is an $A_1$ singularity.
\end{claim}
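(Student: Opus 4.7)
The plan is to combine the preceding claim, which identifies the corank of $f_2$ with the corank of $p$ as a critical point of the local equation $g = f_2 + f_3$, with the classical Morse and splitting lemmas from singularity theory. In the affine chart $x_0 \neq 0$, the threefold $X$ is cut out by $g$; since $f_3$ has order at least three, the Hessian of $g$ at the origin is exactly $f_2$, so the question reduces to reading off the analytic singularity type from the corank of the Hessian together with the higher-order data of $g$.

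For the corank zero case, the Morse lemma provides local analytic coordinates in which $g$ is right-equivalent to $y_1^2 + y_2^2 + y_3^2 + y_4^2$, which is the defining equation of an $A_1$ singularity; conversely, $A_1$ is by definition a Morse point, so its Hessian is nondegenerate and $f_2$ has corank zero. For the corank one case, I would invoke the splitting lemma to obtain local coordinates in which
\[ g \sim y_1^2 + y_2^2 + y_3^2 + \phi(y_4), \]
where $\phi$ is a one-variable germ of order at least three (the order bound is automatic, because $f_2$ already accounts for the full quadratic part of $g$ in the chosen coordinates). Since $p$ is an isolated singularity, $\phi$ does not vanish identically, so writing $\phi(y_4) = u(y_4)\, y_4^k$ with $u(0) \neq 0$ and $k \geq 3$ and absorbing $u$ into $y_4$ by a further coordinate change, one obtains the standard normal form $y_1^2 + y_2^2 + y_3^2 + y_4^k$ of an $A_{k-1}$ singularity with $k - 1 \geq 2$. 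The converse follows from the fact that the standard model for $A_r$ with $r \geq 2$ has Hessian of corank one.

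The main (mild) technical input is the splitting lemma, together with the observation that a nonzero one-variable holomorphic germ of order at least three is right-equivalent to a pure power. No serious obstacle arises; the statement is essentially a translation between the corank stratification of quadratic forms in four variables and the first level of the $ADE$ classification of hypersurface singularities.
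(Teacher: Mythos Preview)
Your argument is correct and is the standard proof via the Morse lemma and the splitting lemma. Note, however, that the paper does not actually supply a proof of this claim: it is listed in the singularity-theory appendix as a standard fact, alongside the preceding claim identifying the corank of $p$ with the corank of $f_2$. So there is nothing to compare against; your write-up simply fills in the omitted details, and does so correctly. The one point worth making explicit is that the isolatedness hypothesis on $p$ is genuinely needed in the corank-one direction (to exclude $\phi\equiv 0$), and you do invoke it at the right moment.
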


\begin{claim} \label{Anblowup}
	Let $X$ be a variety with an $A_n$ singularity at $p$. Then the blow-up $\tilde{X}$ of $X$ at $p$ has an $A_{n-2}$ singularity (or no singularity if $n\leq 2$) which is the unique singularity of $\tilde{X}$ contained in the exceptional divisor.
\end{claim}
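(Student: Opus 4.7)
The plan is to put the $A_n$ singularity into its analytic normal form and then do a direct local blow-up computation in standard affine charts. By the classification of $ADE$ singularities, around $p$ the variety $X$ is analytically isomorphic to the hypersurface
\[
V(f)\subset\bbC^k,\qquad f(x_1,\ldots,x_k)=x_1^{n+1}+x_2^2+\cdots+x_k^2,
\]
where $k=\dim X+1$ and $p$ corresponds to the origin. Since blowing up commutes with analytic isomorphism, it suffices to understand the proper transform $\Xb\subset \mathrm{Bl}_0\bbC^k$ of $V(f)$ along the exceptional divisor of $\mathrm{Bl}_0\bbC^k$, and to check which points of $\Xb$ lying over $0$ are singular. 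The exceptional divisor of $\Xb\to V(f)$ is cut out on the projectivized tangent cone by the initial form of $f$, which (for $n\ge 1$) is the quadric $x_2^2+\cdots+x_k^2=0$ in $\bbP^{k-1}$; this quadric has a unique singular point, namely $[1:0:\cdots:0]$, so singularities of $\Xb$ over $p$ can only appear in the affine chart of $\mathrm{Bl}_0\bbC^k$ centered at this point.

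The bulk of the proof is the calculation in that chart. I would use coordinates $u=x_1$ and $v_i=x_i/x_1$ for $i\ge 2$, in which the total transform of $f$ is
\[
u^{n+1}+u^2(v_2^2+\cdots+v_k^2)=u^2\bigl(u^{n-1}+v_2^2+\cdots+v_k^2\bigr),
\]
so the proper transform $\Xb$ has local equation $g=u^{n-1}+v_2^2+\cdots+v_k^2=0$. For $n\le 2$ the linear term in $g$ is nonzero at the origin (or $g$ is identically $1$ plus quadratic for $n=1$), so $\Xb$ is smooth there; for $n\ge 3$, the polynomial $g$ is precisely the normal form of an $A_{n-2}$ singularity at the origin of this chart. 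This identifies the type of singularity claimed.

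To confirm uniqueness on the exceptional divisor, I would carry out the analogous calculation in the remaining $k-1$ charts. In the chart centered at $[0:\cdots:0:1:0:\cdots:0]$ (the $j$-th standard direction with $j\ge 2$), using $v=x_j$ and $u_i=x_i/x_j$ for $i\ne j$, the proper transform of $f$ becomes
\[
h=v^{n-1}u_1^{n+1}+1+\sum_{i\ne 1,j}u_i^2=0.
\]
On the exceptional locus $v=0$, the partial derivatives $\partial h/\partial v$, $\partial h/\partial u_1$ vanish identically, while $\partial h/\partial u_i=2u_i$ for $i\ne 1,j$; any singular point on $v=0$ would therefore need $u_i=0$ for all such $i$, forcing $h=1\ne 0$. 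Hence $\Xb$ has no singular points on $E$ in any chart other than the first one. Combined with the previous paragraph, this gives the claim.

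I expect no real obstacle: the argument is essentially bookkeeping in affine charts, and the only thing to be slightly careful about is handling the low-$n$ degenerate cases ($n=0,1,2$) so that the statement ``$A_{n-2}$ singularity or no singularity if $n\le 2$'' is uniformly verified, together with justifying the reduction to analytic normal form at the start.
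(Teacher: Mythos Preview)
Your argument is correct and is the standard local computation. Note, however, that in the paper this claim is stated without proof: it appears in the appendix on singularity theory as a well-known elementary fact, with no accompanying \texttt{proof} environment (the proof that follows it in the text belongs to Proposition~\ref{gdrop}, not to this claim). So there is nothing to compare against; you have simply supplied the routine verification that the paper omits.

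One small wording issue: your assertion that $\partial h/\partial u_1$ vanishes identically on $v=0$ holds only for $n\ge 2$ (for $n=1$ one gets $2u_1$), but this does not affect the conclusion and you already flagged the low-$n$ cases as requiring separate attention.
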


\begin{proposition} \label{gdrop}
	If the combination of singularities on a curve $C$ is $a_1A_1+...+a_nA_n$, then $$g_a(C)\geq \sum_{i=1}^n a_i\lceil \frac{i}{2}\rceil.$$
\end{proposition}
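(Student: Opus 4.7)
The plan is to reduce the inequality to two classical facts about reduced curve singularities: the genus--$\delta$ formula coming from normalization, and the computation of the $\delta$-invariant of the $A_k$ germs.

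\smallskip

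First, I would recall the $\delta$-invariant $\delta_p=\dim_\bbC(\nu_*\calO_{\tilde C}/\calO_C)_p$ of the normalization $\nu:\tilde C\to C$ at a singular point $p$, together with the short exact sequence $0\to\calO_C\to\nu_*\calO_{\tilde C}\to\bigoplus_p\bbC^{\delta_p}\to 0$. Taking Euler characteristics yields the classical identity $g_a(C)=g_a(\tilde C)+\sum_{p\in\mathrm{Sing}(C)}\delta_p$. In every application in the paper the proposition is used on a curve that is either irreducible, or is applied separately to each irreducible component of a reducible curve, so $\tilde C$ is smooth and connected, $g_a(\tilde C)=g(\tilde C)\geq 0$, and we conclude $g_a(C)\geq\sum_p\delta_p$.

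\smallskip

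Second, I would compute $\delta_{A_k}=\lceil k/2\rceil$ from the local normal form $y^2=x^{k+1}$. For $k=2m$ the germ is unibranch with parametrization $t\mapsto(t^2,t^{2m+1})$; the value semigroup $\langle 2,2m+1\rangle\subset\bbZ_{\geq 0}$ has exactly $m$ gaps, so $\delta=m$. For $k=2m-1$ the equation factors as $(y-x^m)(y+x^m)=0$, producing two smooth branches meeting with contact of order $m$; the plane-curve identity $2\delta=\mu+r-1$ (with $r$ the number of branches and $\mu$ the Milnor number) yields $\delta=\tfrac12(k+2-1)=m$. In both cases $\delta_{A_k}=\lceil k/2\rceil$.

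\smallskip

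Summing $\delta_p$ over the singularities of type $a_1A_1+\cdots+a_nA_n$ produces $\sum_i a_i\lceil i/2\rceil$, and combined with the genus--$\delta$ formula this proves the inequality. The only delicate point is the bound $g_a(\tilde C)\geq 0$, which would fail for a disconnected reducible $C$; in that case one would need to subtract a correction $r_{\tilde C}-1$ accounting for the components of $\tilde C$. This is not an obstacle for our purposes, since every curve to which the proposition is applied in Section \ref{sect_an} is irreducible, and hence $\tilde C$ has a single component.
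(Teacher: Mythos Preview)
Your proof is correct and complete, and you correctly identify and handle the irreducibility caveat that the statement tacitly requires. However, your route differs from the paper's. The paper argues by induction on the exponent via Claim~\ref{Anblowup}: blowing up a curve at an $A_n$ point (a point of multiplicity two) drops the arithmetic genus by $\binom{2}{2}=1$ and leaves an $A_{n-2}$ singularity on the strict transform, so after $\lceil n/2\rceil$ blow-ups the singularity is resolved; the base cases $A_1$ and $A_2$ are handled by the normalization short exact sequence. You instead compute $\delta_{A_k}=\lceil k/2\rceil$ in one shot from the local normal form (value semigroup in the unibranch case, Milnor's $2\delta=\mu+r-1$ in the two-branch case) and then apply the global genus--$\delta$ formula once. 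The two arguments are equivalent in content---iterated blow-ups are one way of computing $\delta$---but yours is more self-contained and avoids the inductive bookkeeping, while the paper's version has the advantage of reusing Claim~\ref{Anblowup}, which is already needed elsewhere (e.g.\ in Lemma~\ref{classofc}). Your remark that the bound $g_a(\tilde C)\geq 0$ needs $\tilde C$ connected, and hence $C$ irreducible, is a genuine point: the proposition as stated is false for reducible $C$ (two lines meeting in a node have $g_a=0<1$), and the paper's blow-up argument has the same hidden hypothesis. As you note, every invocation in Section~\ref{sect_an} is on an irreducible curve, so this does not affect the applications.
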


\begin{proof}
	The inequality follows from Claim \ref{Anblowup} by induction. The base of induction for $A_1$ and $A_2$ can be proven by taking the short exact sequence of sheaves for normalization.
\end{proof}

\section{Cubic threefolds with one-parameter symmetry groups} \label{Sym_Threefolds}

There are several papers by bu Plessis and Wall (including \cite{DPWI} and \cite{DPW08}) where they study quasi-smooth projective hypersurfaces with symmetry. In particular, they give a complete classification of singularities on quasi-smooth 1-symmetric cubic threefolds in \cite{DPW08}.

\begin{definition}
	A variety $X\subset\bbP^n$ is \textit{quasi-smooth} if it has only isolated singularities and is not a cone.
\end{definition}

For the rest of this section, we will assume that $X\subset\bbP^n$ is a quasi-smooth hypersurface of degree $d$.

\begin{definition}
	We say that $X$ is \textit{$k$-symmetric} if it admits a $k$-dimensional algebraic subgroup $G$ of $PGL_n(\bbC)$ as automorphism group.
\end{definition}
	
\begin{proposition}[{\cite{DPW08}, Corollary 2.6}]
	Suppose $X$ is 1-symmetric. Then the Tjurina number $\tau(X)\leq (d-1)^{n-2}(d^2-3d+3)$.
\end{proposition}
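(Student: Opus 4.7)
The plan is to use the one-parameter symmetry to produce a linear syzygy among the partial derivatives of $f$, and then to use this syzygy to bound the stable Hilbert function of the Jacobian algebra, which computes $\tau(X)$.

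To extract the syzygy, I would lift $G\subset PGL_n(\bbC)$ to a non-scalar linear vector field $v=\sum_{i=0}^{n}g_i\,\partial/\partial x_i$ on $\bbA^{n+1}$ with $g_i\in S_1$, where $S=\bbC[x_0,\dots,x_n]$. Invariance of $X=V(f)$ under $G$ forces $v(f)=\lambda f$ for some $\lambda\in\bbC$, and subtracting $\tfrac{\lambda}{d}$ times the Euler field $E$ (which satisfies $E(f)=d\cdot f$) lets us assume $v(f)=0$ while keeping $v\notin\bbC\cdot E$ (using that $G$ is a nontrivial $1$-dimensional subgroup). This yields the identity $\sum_i g_i\,\partial_i f=0$ in $S$, i.e.\ a nontrivial element of the syzygy module of $J_f=(\partial_0 f,\dots,\partial_n f)$ of internal degree one, whereas the generic (Koszul) syzygies of $J_f$ live in internal degree $d-1$.

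Next, I would translate $\tau(X)$ into a statement about the graded Jacobian algebra $M(f):=S/J_f$. Since $f\in J_f$ by Euler's formula in characteristic zero, $M(f)$ is the graded coordinate ring of $V(J_f)\subset\bbA^{n+1}$, whose support off the origin consists of the affine lines over $\mathrm{Sing}(X)$. A standard length computation then identifies the leading term of the Hilbert polynomial of $M(f)$ with the sum of local Tjurina numbers, giving $\dim_{\bbC}M(f)_k=\tau(X)$ for all $k\gg 0$. Hence it suffices to bound $\dim M(f)_k$ in this stable range.

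Finally, the linear syzygy $(g_0,\dots,g_n)$ perturbs the Koszul-type resolution of $M(f)$ by a rank-one correction supported in low internal degree, shrinking its Hilbert series relative to the generic complete-intersection expression $(1-t^{d-1})^{n+1}/(1-t)^{n+1}$. Tracking this correction carefully and evaluating it in the stable range should produce the claimed bound $(d-1)^{n-2}(d^2-3d+3)$. The main obstacle is exactly this last bookkeeping: because the $g_i$ are not units in $S$, the syzygy does not literally eliminate a generator of $J_f$, and the perturbed complex is not automatically exact. A convenient workaround is a Jordan decomposition of $v$, splitting the argument into the two cases $G=\mathbb{G}_m$ (semisimple $v$, so $f$ is quasi-homogeneous with respect to the weights of $v$, enabling a direct weighted-degree count) and $G=\mathbb{G}_a$ (nilpotent $v$, so iterating $v$ on $M(f)$ yields a filtration by $\ker v^k$ whose associated graded is easier to estimate). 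In either case one expects the resulting Artinian quotient to have dimension exactly $(d-1)^{n-2}(d^2-3d+3)$, giving the desired bound on $\tau(X)$.
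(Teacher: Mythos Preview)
The paper does not give a proof of this proposition at all: it is quoted verbatim as \cite{DPW08}, Corollary 2.6, and used as a black box in Appendix~\ref{Sym_Threefolds}. So there is no ``paper's own proof'' to compare your attempt against.

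That said, your sketch is essentially the du~Plessis--Wall strategy. Extracting a non-Euler linear vector field $v$ annihilating $f$ from the $1$-parameter symmetry is exactly how they begin, and the identification of $\tau(X)$ with the stable value of the Hilbert function of $S/J_f$ is standard. Where you stop is precisely where the real work lies: the claim that the single extra linear syzygy forces the stable Hilbert function down to $(d-1)^{n-2}(d^2-3d+3)$ is not just bookkeeping, and your proposed Jordan-decomposition workaround is stated but not executed. In particular, in the semisimple case ``$f$ quasi-homogeneous with respect to the weights of $v$'' does not by itself give the bound without further argument about how those weights interact with the grading coming from the standard degree; and in the unipotent case the filtration by $\ker v^k$ does not obviously produce an Artinian quotient of the advertised dimension. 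So your proposal is a correct outline of the right approach, but the substantive estimate is asserted rather than proved; as written it is a plan, not a proof.
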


\begin{definition}
	We say that $X$ is \textit{oversymmetric} if it is 1-symmetric and $\tau(X)=(d-1)^{n-2}(d^2-3d+3)$.
\end{definition}

\begin{proposition}[{\cite{DPW08}, Corollary 2.7}]
	The hypersurface $X$ cannot be 3-symmetric; it is 2-symmetric if and only if it is oversymmetric and $d=3$.
\end{proposition}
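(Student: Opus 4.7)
My approach is to refine the Tjurina inequality of the preceding proposition to account for higher-dimensional symmetry. Suppose $X$ is $k$-symmetric and fix a basis $v_1,\ldots,v_k$ of $\mathrm{Lie}(G)\subset\mathfrak{sl}_{n+1}(\bbC)$, viewed as linear derivations of $\bbC[x_0,\ldots,x_n]$. Each $v_i$ satisfies $v_i(f)=\lambda_i f$ for some $\lambda_i\in\bbC$, so together with the Euler derivation they provide $k+1$ independent relations with linear coefficients among $f,\partial_0 f,\ldots,\partial_n f$.

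The bound $\tau(X)\leq(d-1)^{n-2}(d^2-3d+3)$ is obtained in \cite{DPW08} by feeding one such relation into a Hilbert-series analysis of the Jacobian algebra $R/J(f)$. I would repeat that analysis with all of $v_1,\ldots,v_k$ in hand: each additional $v_i$ contributes a further syzygy in the appropriate graded piece, and the bookkeeping yields, for each $k\geq 1$, a constraint of the shape
\[
\tau(X)+\delta_k(n,d)\;\leq\;(d-1)^{n-2}(d^2-3d+3),
\]
where $\delta_k(n,d)\geq 0$ is a correction that is strictly positive whenever $k\geq 3$, and also whenever $k=2$ and $d\geq 4$. This immediately forces $k\leq 2$ in general and $d=3$ whenever $k=2$.

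It remains to upgrade the $k=2$, $d=3$ conclusion to oversymmetricity and to verify the converse. For the forward implication, I would argue that a 2-dimensional subgroup of $PGL_{n+1}$ stabilizing $X$ acts with large enough orbits on $X$ to pin down the defining equation up to a coordinate change; checking on this normal form that $X$ saturates the Tjurina bound then gives that $X$ is oversymmetric. For the converse, I would exhibit a second 1-parameter subgroup in $\mathrm{Aut}(X)$ directly from the normal form of an oversymmetric cubic, using the extra freedom that arises precisely because the Jacobian-algebra syzygies are maximally redundant when the Tjurina bound is saturated.

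The main obstacle is the refined inequality itself, specifically showing that $\delta_k(n,d)$ has the claimed sign. The syzygies coming from the $v_i$ carry $f$ on the right-hand side rather than $0$, so they are not Koszul, and one must track degree by degree that they contribute independently to the Hilbert function of $R/J(f)$ and are not already accounted for by Euler together with the earlier $v_j$. Once that structural independence is in place, computing $\delta_k(n,d)$ reduces to a combinatorial exercise with binomial coefficients.
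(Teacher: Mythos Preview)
The paper does not prove this proposition; it is quoted verbatim as Corollary~2.7 of \cite{DPW08} and used as a black box in the appendix summarizing du~Plessis--Wall's classification. So there is no ``paper's own proof'' to compare against.

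As a standalone argument, your outline is in the spirit of how du~Plessis and Wall actually operate: extra vector fields annihilating $f$ modulo $(f)$ give extra linear syzygies on the partials, and feeding these into a Macaulay/Hilbert-function count tightens the bound on $\tau(X)$. That much is sound and matches their method. The genuine gap is that your entire argument rests on the inequality $\tau(X)+\delta_k(n,d)\le (d-1)^{n-2}(d^2-3d+3)$ with the asserted sign properties of $\delta_k$, and you do not compute $\delta_k$ or even sketch why the additional syzygies contribute independently beyond the first one. You flag this yourself as ``the main obstacle,'' but without that computation the proposal is a plan rather than a proof: everything hinges on a quantity you have not produced. The treatment of the biconditional is similarly programmatic---``acts with large enough orbits to pin down the defining equation'' and ``exhibit a second $1$-parameter subgroup directly from the normal form'' are promises, not arguments, and in particular the forward direction (2-symmetric $\Rightarrow$ oversymmetric) is exactly where the refined inequality with $k=2$, $d=3$ would need to give $\delta_2(n,3)=0$ \emph{and} force equality, which you have not shown.
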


\begin{corollary}
	If $X$ is a cubic threefold then $\tau(X)\leq 12$. The Tjurina number $\tau(X)=12$ if and only if $X$ is 2-symmetric.
\end{corollary}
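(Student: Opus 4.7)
The plan is to treat this corollary as a direct specialization of the two propositions cited from du~Plessis--Wall \cite{DPW08} to the parameters $d = 3$, $n = 4$, together with an unpacking of the definition of \emph{oversymmetric}. There is essentially no geometric content beyond what has already been quoted; the work is arithmetic substitution and definition chasing.

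First I would establish the numerical bound. Assuming $X$ is a $1$-symmetric cubic threefold (which is the running hypothesis of the section), the preceding Proposition gives
\[
\tau(X) \;\leq\; (d-1)^{n-2}\bigl(d^2 - 3d + 3\bigr).
\]
Plugging in $d = 3$ and $n = 4$, the right-hand side is $(3-1)^{4-2}\bigl(9 - 9 + 3\bigr) = 2^{2} \cdot 3 = 12$, yielding $\tau(X) \leq 12$ as claimed.

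Next I would handle the equality characterization. By the definition given just above the result, $\tau(X) = (d-1)^{n-2}(d^2-3d+3)$ is exactly the condition that $X$ be oversymmetric; with the numerical value computed in the previous step, this is the condition $\tau(X) = 12$. Now apply the second cited Proposition, which asserts that a quasi-smooth hypersurface is $2$-symmetric if and only if it is oversymmetric \emph{and} $d = 3$. Since our $X$ is a cubic threefold, the condition $d = 3$ is automatic, so in this setting ``oversymmetric'' and ``$2$-symmetric'' coincide. Chaining the equivalences gives $\tau(X) = 12 \iff X$ is oversymmetric $\iff X$ is $2$-symmetric.

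The only subtle point worth flagging is that the bound $\tau(X) \leq 12$ requires $X$ to be at least $1$-symmetric in order to invoke the cited Proposition; outside this section's running hypothesis, the inequality as stated is not a direct consequence of the results quoted. So the ``main obstacle,'' such as it is, is merely to confirm that the implicit standing assumption of the section (namely $1$-symmetry) is being carried into the corollary; once that is noted, the proof is a one-line computation followed by a one-line definitional rewrite.
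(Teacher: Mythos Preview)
Your proposal is correct and matches the paper's approach: the corollary is stated without proof in the paper precisely because it is an immediate specialization of the two quoted propositions from \cite{DPW08} to $d=3$, $n=4$, exactly as you have written out. Your observation that the 1-symmetry hypothesis is being carried implicitly from the section context is apt---indeed, the paper elsewhere notes that $\tau(O_{16})=16$, so the bound $\tau(X)\le 12$ cannot hold for arbitrary quasi-smooth cubic threefolds.
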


In {\cite{DPW08}, du Plessis and Wall describe singularities of 1-symmetric quasi-smooth hypersurfaces. If $X$ is 1-symmetric, there are two possibilities for $G$: a linear algebraic one-parameter group is isomorphic either to the multiplicative group $\bbC^*$ (semisimple case) or to the additive group $\bbC$ (unipotent case).

\medskip

In the semisimple case, singularities of $X$ are determined by the weights of the corresponding $\bbC^*$-action. General methods are introduced in Sections 3 and 5 of \cite{DPWI}. The cubic threefold case is considered in Section 5 of \cite{DPW08}.

\begin{theorem} [{\cite{DPW08}, Section 5}] \label{1-sym-ss}
	Let $X\subset\bbP^4$ be a 1-symmetric quasi-smooth cubic threefold with $G$ semisimple. The following table has a list of possible combinations of singularities on such a threefold together with the weights of the corresponding $\bbC^*$-action. In the two cases marked with $*$ additional singularities can appear: $A_1$ for $[-2, -1, 0, 1, 2]$ and $A_1$, $2A_1$, $A_2$, $3A_1$, $A_3$ or $D_4$ for $[-1, 0, 0, 0, 1]$.
\end{theorem}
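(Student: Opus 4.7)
The plan is to classify, up to $PGL_5(\bbC)$-conjugacy, the semisimple one-parameter subgroups $\bbC^*\subset PGL_5(\bbC)$ that preserve some quasi-smooth cubic threefold $X\subset\bbP^4$, and for each such subgroup to identify the singularities of the invariant cubic. Diagonalizing the action in suitable coordinates $[x_0:\ldots:x_4]$ puts it in the form $t\cdot x_i = t^{w_i}x_i$ for integer weights $w_i$ that may be taken coprime and ordered $w_0\le\cdots\le w_4$, and by absorbing a character into the $\bbC^*$ one may further require the defining cubic $f$ to be weight-zero, i.e. a linear combination of monomials $x^{a}$ with $\sum a_i=3$ and $\sum a_iw_i=0$.

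Next I would enumerate the weight systems admitting such an $f$ that is quasi-smooth, i.e. not a cone and with only isolated singularities. This is a finite combinatorial problem: for each candidate $[w_0,\ldots,w_4]$ one lists the weight-zero cubic monomials, rules out systems in which the resulting $f$ is forced to be a cone or to have positive-dimensional singular locus, and keeps what remains. The Tjurina bound $\tau(X)\le 12$ from the preceding corollary (which also determines the $2$-symmetric threshold) gives a sharp ceiling on the total complexity of the singularities and provides a useful sanity check that no weight system is missed.

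Having fixed a weight system, I would locate and classify the singularities of $X$. Since the $\bbC^*$-orbit through a smooth point is a smooth curve, every singular point of $X$ lies in the $\bbC^*$-fixed locus of $\bbP^4$; the isolated fixed points are the coordinate points $e_i=[0:\ldots:1:\ldots:0]$, while positive-dimensional components of the fixed locus arise when several $w_i$ coincide. At each such fixed point, separating $f$ into its quadratic and cubic parts in affine coordinates and applying the projection method of Section~\ref{projection} (together with the normal-form determinators from \cite{Arnold2012} in the corank $2$ and $3$ cases) pins down the analytic type. The resulting combination of types, for generic choice of the remaining coefficients in $f$, is exactly the entry associated with that weight system in the table.

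The hard part is the two starred cases $[-2,-1,0,1,2]$ and $[-1,0,0,0,1]$, where the space of weight-zero cubic monomials has dimension $\ge 2$ after quotienting by the normalizer of the $\bbC^*$, so the invariant cubic varies in a nontrivial family. For generic parameters one obtains the singularities listed in the main row; for special parameter values new coincidences among the cubic monomials produce the extra $A_1,\,2A_1,\,A_2,\,3A_1,\,A_3$, or $D_4$ combinations. Certifying that exactly these extra types arise, and no others, requires a careful parameter-dependent analysis of the singular locus—combining the Tjurina-number ceiling with the adjacency data of Theorem~\ref{Gr} to cut down the possibilities—and this is the main technical point of the proof.
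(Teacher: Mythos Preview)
The paper does not give a proof of this statement: Theorem~\ref{1-sym-ss} is quoted verbatim from \cite{DPW08}, Section~5, and the surrounding appendix only summarizes du~Plessis--Wall's results without reproducing their arguments. So there is no ``paper's own proof'' to compare against; the relevant comparison is with the methods of \cite{DPWI} and \cite{DPW08}, to which the paper defers.

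That said, your sketch is broadly aligned with the du~Plessis--Wall approach as the paper describes it (``singularities of $X$ are determined by the weights of the corresponding $\bbC^*$-action''). A couple of points deserve tightening. First, your justification that singular points lie in the fixed locus is slightly off: the correct argument is that $Sing(X)$ is $\bbC^*$-invariant and zero-dimensional, hence consists of fixed points; the smoothness of a generic orbit is not what forces this. Second, the enumeration step (``a finite combinatorial problem'') hides real work: one must bound the weights, and the Tjurina inequality $\tau(X)\le 12$ is an output of the theory rather than an input you can use to cut down the search a priori. In \cite{DPW08} the finiteness and the list come from their general machinery for weighted-homogeneous hypersurfaces, not from an ad hoc search guided by $\tau$. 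Finally, in the starred cases your plan to invoke Theorem~\ref{Gr} and the Tjurina ceiling to rule out further degenerations is not quite the right tool: those results constrain what \emph{can} deform to what, but here you need to determine which strata actually occur inside the specific pencil of $\bbC^*$-invariant cubics, which is a direct computation in coordinates rather than an adjacency argument.
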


\renewcommand{\arraystretch}{1.25}

\begin{table}[h]
	\begin{center}
		\begin{tabular}{c c c | c c c}
			$\mu$ & Weights & Singularities & $\mu$ & Weights & Singularities \\ \hline
			11 & $[-8, -2, 1, 4, 16]$ & $S_{11}$ & 11 & $[-10, -4, 2, 5, 8]$ & $A_7+A_4$ \\
			11 & $[-8, -2, 1, 4, 4]$ & $D_5+2A_3$ & 11 & $[-8, -2, 1, 4, 7]$ & $D_8+A_3$ \\
			11 & $[-5, -2, 1, 1, 4]$ & $D_5+A_3+2A_1$ & 11 & $[-2, -2, 1, 1, 4]$ & $X_9+2A_1$ \\
			10 & $[-4, -1, 0, 2, 8]$ & $Q_{10}$ & 10 & $[-4, -1, 0, 2, 5]$ & $E_8+A_2$ \\
			10 & $[-4, -1, 0, 2, 4]$ & $D_7+A_3$ & 10 & $[-2, -2, 0, 1, 4]$ & $E_6+2A_2$ \\
			10 & $[-2,-1,0,1,3]$ & $E_7+A_2+A_1$ & $10*$ & $[-2, -1, 0, 1, 2]$ & $2A_5$ \\
			12 & $[-2, 0, 0, 1, 4]$ & $U_{12}$ & 12 & $[-2, 0, 0, 1, 2]$ & $J_{10}+A_2$ \\
			8 & $[-1, 0, 0, 0, 2]$ & $P_8$ & $8*$ & $[-1, 0, 0, 0, 1]$ & $2D_4$ \\
		\end{tabular}
		\bigskip
		\caption{Singularities of 1-symmetric cubic threefolds (semisimple case)}
	\end{center}
	\vspace*{-1em}
\end{table}

\renewcommand{\arraystretch}{1}

The unipotent case is handled in Section 4 of \cite{DPW08}. In the cubic threefold case, the classification is given in the theorem below:

\begin{theorem}[{\cite{DPW08}, Section 5}] \label{unipotent}
	Let $X\subset\bbP^4$ be a 1-symmetric quasi-smooth cubic threefold with $G$ unipotent. Then possible combinations of singularities on $X$ are $A_{11}$, $U_{12}$, $T_{266}$, $T_{256}$, $T_{246}+A_1$, $J_{10}+A_2$ and $J_{10}+A_1$.
\end{theorem}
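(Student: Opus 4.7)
The plan is to classify all quasi-smooth 1-symmetric cubic threefolds with unipotent automorphism group by running through the possible conjugacy classes of one-parameter unipotent subgroups $G \subset PGL_5(\bbC)$ and, for each one, parameterizing the $G$-invariant cubic forms and determining the singularities of the resulting family.

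First I would enumerate the conjugacy classes. A one-parameter unipotent subgroup of $PGL_5(\bbC)$ is the exponential of a nonzero nilpotent element of $\mathfrak{sl}_5$, so up to conjugation it is classified by the partitions of $5$ other than $[1^5]$, namely $[5]$, $[4,1]$, $[3,2]$, $[3,1,1]$, $[2,2,1]$, $[2,1,1,1]$. For each partition I pick coordinates $(x_0,\ldots,x_4)$ adapted to a Jordan basis and write the action $g_t = \exp(tN)$ on $\bbP^4$ in explicit triangular form. Next, for each Jordan type I compute the space of $G$-invariant cubics: this is the kernel of the linear operator induced by $N$ on $\mathrm{Sym}^3(\bbC^5)^*$, which is a finite-dimensional linear-algebra computation yielding a finite-parameter family of cubics $F = F_\lambda$ with the prescribed symmetry.

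For each family, I then impose quasi-smoothness (isolated singularities, not a cone), which cuts out an open subset of the parameter space $\{\lambda\}$, and I identify the candidate singular points: by equivariance they must lie on the closed $G$-orbits in $\bbP^4$, which are the $G$-fixed points (or fixed flags). Around each such fixed point I expand $F_\lambda$ in local affine coordinates as $x_0 f_2 + f_3$ as in Section \ref{notation}, and I identify the analytic singularity type using the determinator of Chapter 16 of \cite{Arnold2012}, exactly as in the corank computations of Sections \ref{sect_cr3} and \ref{sect_cr2}. Theorem \ref{projthm} then accounts for any further singularities away from the fixed locus, through the geometry of the curve $C = Q \cap S$. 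Running through all six Jordan types and all parameter degenerations within each one, matching the list of analytic types that arise against the list in the statement, should produce exactly $A_{11}$, $U_{12}$, $T_{266}$, $T_{256}$, $T_{246}+A_1$, $J_{10}+A_2$, $J_{10}+A_1$; in particular, since $\tau \ge 11$ in most of these cases, Theorem \ref{F:DPW} does not a priori guarantee their realization, and the unipotent symmetry is precisely what forces the degeneration.

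The main obstacle is organizing the case analysis across Jordan types and parameter strata, and in particular distinguishing which exceptional unimodal or hyperbolic type occurs at each fixed point. Telling apart $J_{10}+A_1$ from $J_{10}+A_2$, or $T_{256}$ from $T_{266}$, requires computing enough of the Newton polyhedron of $f_3$ at the fixed point to apply Arnold's determinator unambiguously, and simultaneously checking the non-degeneracy of the leading coefficients that rule out a cone or a non-isolated singular locus. I expect the single Jordan block $[5]$ to be the hardest case, since that action has a unique fixed point in $\bbP^4$ which must absorb all the singular behaviour of $X$; this is the case producing the extremal singularities $U_{12}$ and $A_{11}$, and the local expansion at the fixed point must be pushed to high order to distinguish between them.
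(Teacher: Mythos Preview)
The paper does not contain a proof of this theorem: it is quoted verbatim from \cite{DPW08}, Section~5, and is used only as input (e.g.\ in Proposition~\ref{crk2j0final} and Propositions~\ref{A12}, \ref{A_odd}) to certify that certain configurations actually occur. So there is no ``paper's own proof'' to compare against.

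That said, your outline is exactly the strategy carried out in \cite{DPW08}: classify the unipotent one-parameter subgroups of $PGL_5$ by Jordan type of the nilpotent $N$, compute $\ker(N)$ on cubic forms, and run Arnold's determinator at the fixed locus. A couple of small corrections. First, your remark that ``$\tau\ge 11$ in most of these cases, so Theorem~\ref{F:DPW} does not a priori guarantee their realization'' is off: the threshold in Theorem~\ref{F:DPW} for cubics is $\tau<16$, and that theorem is about versality of the ambient family, not about realizing a given configuration; the point of invoking \cite{DPW08} is simply to exhibit concrete cubics with these singularities. Second, the singular points need not all be $G$-fixed: the $G$-orbit of a singular point is contained in $\mathrm{Sing}(X)$, which is finite, so singular points lie on \emph{finite} $G$-orbits, and for $G\cong\bbC$ finite orbits are indeed fixed points---but you should say this rather than assert it. Finally, it is not the Jordan type $[5]$ alone that produces $A_{11}$ versus $U_{12}$; the distinction between these (corank~$1$ versus corank~$3$) is governed by the rank of the invariant quadratic part $f_2$ at the fixed point, which depends on the coefficients of the invariant cubic, not on the Jordan type by itself.
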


\begin{corollary}
	Let $X\subset\bbP^4$ be a 2-symmetric quasi-smooth cubic threefold. Then $\tau(X)=12$ and $X$ has one of the following combinations of singularities: $U_{12}$, $T_{266}$, $J_{10}+A_2$, $3D_4$.
\end{corollary}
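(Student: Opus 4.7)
The plan is to reduce the corollary to an inspection of the classifications already established in Theorems \ref{1-sym-ss} and \ref{unipotent}, combined with the Tjurina equality $\tau(X)=12$ automatically forced on any 2-symmetric cubic threefold. A 2-symmetric quasi-smooth cubic threefold is in particular 1-symmetric, and its connected 1-parameter subgroup is either isomorphic to $\bbC^*$ (semisimple case) or to $\bbC$ (unipotent case). So its combination of singularities must appear in one of the two lists. Moreover, by the earlier corollary in this appendix (\enquote{$\tau(X)=12$ if and only if $X$ is 2-symmetric}), we have $\tau(X)=12$. The entire task is therefore to identify which entries of the two lists achieve this Tjurina total.

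Next I would tabulate the Tjurina numbers of the relevant germs. For $ADE$ singularities we have $\tau=\mu$ by Remark \ref{TjM}. The simply-elliptic germs $P_8=\Esx$, $X_9=\Esv$, $J_{10}=\Ee$ and the exceptional unimodals $Q_{10}$, $S_{11}$, $U_{12}$ are all quasihomogeneous, so again $\tau=\mu$ (in standard form $f$ lies in its own gradient ideal). The only subtlety lies with the hyperbolic germs $T_{pqr}$ with $\tfrac{1}{p}+\tfrac{1}{q}+\tfrac{1}{r}<1$: these are not quasihomogeneous, and a direct count from the normal form $x^p+y^q+z^r+xyz$ gives $\tau=\mu-1=p+q+r-2$.

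With these inputs, running through the semisimple table of Theorem \ref{1-sym-ss} shows that $\tau=12$ is attained exactly by $U_{12}$ (as $\tau=\mu=12$), by $J_{10}+A_2$ (as $10+2=12$), and by $3D_4$, which arises as the starred row $2D_4$ with the permitted additional $D_4$ summand (total $4+4+4=12$). Every other semisimple configuration has $\tau\le11$: in the rows with $\mu\le11$ all singularities are $ADE$ or quasihomogeneous so $\tau=\mu\le11$; in the $2A_5$ starred row the largest permitted extra is $A_1$, giving $\tau=11$; in the $2D_4$ starred row every permitted extra smaller than $D_4$ gives $\tau\le11$. Running through the unipotent list of Theorem \ref{unipotent}, $\tau=12$ holds for $U_{12}$, for $T_{266}$ (the only hyperbolic case large enough: $\tau=13-1=12$), and for $J_{10}+A_2$; the remaining entries $A_{11}$, $T_{256}$, $T_{246}+A_1$, $J_{10}+A_1$ all give $\tau=11$. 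Combining the two lists yields precisely the four combinations $U_{12}$, $T_{266}$, $J_{10}+A_2$, $3D_4$ claimed.

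The only non-routine step is the Tjurina identity $\tau=\mu-1$ for the hyperbolic $T_{pqr}$; everything else is a mechanical tally against the two tables. Without this distinction one would mistakenly exclude $T_{266}$ (whose Milnor number $13$ exceeds $12$) or spuriously include $T_{256}$.
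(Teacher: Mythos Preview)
Your approach---reducing to the two 1-symmetric classifications and filtering by the forced equality $\tau(X)=12$---is exactly the argument the paper intends, and your final list is correct. There is, however, one factual slip: the exceptional unimodal germs $Q_{10}$, $S_{11}$, $U_{12}$ are \emph{not} quasihomogeneous in general. Each is a one-parameter family, and only the member with vanishing modulus is quasihomogeneous; the generic member has $\tau=\mu-1$. The paper itself flags this in the Remark immediately following the corollary (``The Tjurina number of a $U_{12}$ singularity can be equal to $11$ or $12$''). Fortunately the slip is harmless for your conclusion: $Q_{10}$ and $S_{11}$ have $\mu\le 11$ and are excluded either way, while $U_{12}$ does attain $\tau=12$ at its quasihomogeneous point and so rightly stays on the list. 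Your treatment of the simply-elliptic germs (whose normal forms are genuinely quasihomogeneous, hence $\tau=\mu$) and of the hyperbolic $T_{pqr}$ (where $\tau=\mu-1$) is correct and is indeed the key distinction needed to separate $T_{266}$ from $T_{256}$ and $T_{246}+A_1$.
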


\begin{remark}
	The Tjurina number of a $U_{12}$ singularity can be equal to 11 or 12 (see Section 5 of \cite{DPW08}).
\end{remark}

\section{Lattice theory} \label{LatticeTheory}

In this appendix, we review necessary definitions and results from lattice theory following the papers of Nikulin \cite{Nikulin} and Dolgachev \cite{Dolgachev}.

\begin{definition}
	A \textit{lattice} of rank $r$ is a free finitely generated $\bbZ$-module $S\cong\bbZ^r$ together with a non-degenerate integral symmetric bilinear form $(\:,\:): S\otimes S\rightarrow\bbZ$. The \textit{signature} of a lattice is a pair $(n_+,n_-)$ where $n_+$ and $n_-$ are the numbers of $\pm1$ in the diagonalization of the bilinear form. A lattice $S$ is called \textit{even} if $(s,s)\in 2\bbZ$ for each $s\in S$.
\end{definition}

\begin{definition}
	The finite group $A_S=S^*/S$ is the \textit{discriminant group} of $S$. It can be equipped with a symmetric bilinear form $b_S: A_S\otimes A_S\rightarrow\bbQ/\bbZ$ induced by the bilinear form on $S$. If $S$ is even, then there is an induced quadratic form $q_S: A_S\rightarrow\bbQ/2\bbZ$. Lattices with trivial discriminant group are called \textit{unimodular}.
\end{definition}

\begin{example}
	The lattice $U$ is an even unimodular lattice of rank 2. Its bilinear form is isomorphic to the one given by the matrix $\begin{psmallmatrix} 0 & 1 \\ 1 & 0 \end{psmallmatrix}$.
\end{example}

\begin{example} \label{DiscrExamples}
	As mentioned in Section \ref{DynkinDiagrams}, Dynkin diagrams of $ADE$ types determine bilinear forms which correspond to $ADE$ lattices. The discriminant groups of these lattices are as follows (for instance, see \cite{Mongardi}, Examples 2.1.11-2.1.14):
	\begin{itemize}
		\item the discriminant group of $A_n$ is $\bbZ/(n+1)\bbZ$;
		\item the discriminant of $D_n$ is $\bbZ/4\bbZ$ when $n$ is odd, and $\bbZ/2\bbZ\times\bbZ/2\bbZ$ when $n$ is even;
		\item $E_8$ is unimodular, $A_{E_7}\cong A_{A_1}$, and $A_{E_6}\cong A_{A_2}$.
	\end{itemize}
\end{example}

\begin{definition}
	Let $S\subset L$ be a sublattice. The lattice $Sat(S)=(S\otimes_{\bbZ}\bbQ)\cap L$ is called the \textit{saturation} of~$S$. If $S=Sat(S)$, then $S$ is \textit{saturated} in $L$. An embedding of lattices $\iota: S\rightarrow L$ is called \textit{primitive} if $\iota(S)\cong S$ is saturated in $L$.
\end{definition}

\begin{definition}
	Let $\iota: S\rightarrow L$ be an embedding of lattices. If the group $H_L=L/S$ is finite, then $\iota$ is an \textit{overlattice} of $S$. If $L$ is even, then $\iota$ is an \textit{even} overlattice. Notice that $H_L\subset L^*/S\subset S^*/S\subset A_S$ since there is a chain of embeddings $S\hookrightarrow L\hookrightarrow L^*\hookrightarrow S^*$.
\end{definition}

\begin{proposition} [{\cite{Nikulin}, Proposition 1.4.1}] \label{Saturation}
	Let $S$ be even. The correspondence $L\mapsto H_L$ determines a bijection between even overlattices of $S$ and isotropic subgroups of $A_S$ (isotropic means $q_S|_{H_L}=0$). Moreover, $H_L^\perp=L^*/S\subset A_S$ and $(q_S|_{H_L^\perp})/H_L=q_L$
\end{proposition}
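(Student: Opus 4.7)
The plan is to construct mutually inverse maps between even overlattices of $S$ and isotropic subgroups of $A_S$, and then read off both the orthogonality statement $H_L^\perp = L^*/S$ and the descent statement $(q_S|_{H_L^\perp})/H_L = q_L$ by unwinding the definitions. The forward map $L \mapsto H_L$ is straightforward: given an even overlattice $S \subset L$, the map $\ell \mapsto (\ell,\cdot)$ lands in $S^*$ because the bilinear form on $L$ extends that of $S$, giving an inclusion $L \hookrightarrow S^*$ that exhibits $H_L = L/S$ as a subgroup of $A_S$. Isotropy is immediate from evenness of $L$: for any lift $\ell \in L$, $q_S(\bar\ell) = (\ell,\ell) \bmod 2\bbZ = 0$.

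The backward map is $H \mapsto L_H := \pi^{-1}(H)$, where $\pi\colon S^* \twoheadrightarrow A_S$ is the projection. This is the step I expect to be the main obstacle, because one must verify that the $\bbQ$-valued form on $S \otimes \bbQ$ restricts to an integer-valued, even form on $L_H$; the subtlety is that the hypothesis is isotropy under $q_S$, not merely triviality of $b_S|_H$. The key input is the standard identity $q_S(\bar x + \bar y) - q_S(\bar x) - q_S(\bar y) = 2 b_S(\bar x, \bar y)$ in $\bbQ/2\bbZ$: when $\bar x, \bar y \in H$, the left side vanishes by isotropy, so $2 b_S(\bar x,\bar y) \equiv 0 \bmod 2\bbZ$, whence $b_S(\bar x,\bar y) = 0$ in $\bbQ/\bbZ$, which gives integrality $(x,y) \in \bbZ$ for $x,y \in L_H$; and $q_S(\bar x) = 0$ gives $(x,x) \in 2\bbZ$, which is evenness. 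Thus $L_H$ is an even overlattice of $S$, and the two constructions are manifestly inverse: $L_{H_L} = \pi^{-1}(L/S) = L$ (using $L \subset S^*$), and $H_{L_H} = L_H/S = H$.

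For the orthogonality claim, I would observe that $S \subset L$ forces $L^* \subset S^*$ inside $S \otimes \bbQ$, and then $x \in S^*$ lies in $L^*$ exactly when $(x,\ell) \in \bbZ$ for every $\ell \in L$; modulo $\bbZ$ this is $b_S(\bar x, \bar\ell) = 0$ for all $\bar\ell \in H_L$, i.e. $\bar x \in H_L^\perp$. Hence $L^*/S = H_L^\perp$. Finally, because $H_L$ is isotropic inside $H_L^\perp$ (so $q_S|_{H_L} = 0$ and $b_S$ vanishes on $H_L \times H_L^\perp$), the form $q_S|_{H_L^\perp}$ descends to $H_L^\perp/H_L = (L^*/S)/(L/S) = L^*/L = A_L$; both the descended form and $q_L$ are given by the same rule $x \mapsto (x,x) \bmod 2\bbZ$ on representatives $x \in L^* \subset S \otimes \bbQ$, so they agree.
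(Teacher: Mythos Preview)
Your argument is correct and complete. Note, however, that the paper does not actually supply a proof of this proposition: it is stated in Appendix~\ref{LatticeTheory} as a citation of \cite{Nikulin}, Proposition~1.4.1, with no accompanying argument. What you have written is essentially the standard proof one finds in Nikulin's paper---the forward map via the chain $S \hookrightarrow L \hookrightarrow L^* \hookrightarrow S^*$, the inverse map by pulling back an isotropic subgroup along $S^* \to A_S$, and the identification $H_L^\perp = L^*/S$ by unwinding the definition of the dual lattice. The one point worth handling carefully, which you do handle correctly, is that isotropy of $H$ under the \emph{quadratic} form $q_S$ (valued in $\bbQ/2\bbZ$) is exactly what is needed both for integrality and evenness of $L_H$: the polarization identity $q_S(\bar x + \bar y) - q_S(\bar x) - q_S(\bar y) = 2b_S(\bar x,\bar y)$ yields $b_S|_{H \times H} = 0$, while $q_S|_H = 0$ directly gives $(x,x) \in 2\bbZ$.
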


\begin{corollary} \label{Sat(A_n)}
	Any embedding of the lattice $A_{p-1}$ with $p$ prime is primitive.
\end{corollary}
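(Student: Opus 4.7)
The plan is to analyze the saturation of $A_{p-1}$ inside the ambient lattice using Proposition~\ref{Saturation}. Given an embedding $\iota: A_{p-1} \hookrightarrow L$, I would set $L' = Sat(\iota(A_{p-1}))$. Then $L'$ is an overlattice of $A_{p-1}$, and the finite quotient $H = L'/A_{p-1}$ embeds naturally into the discriminant group $A_{A_{p-1}} \cong \bbZ/p\bbZ$ (Example~\ref{DiscrExamples}). Since $p$ is prime, $H$ is either trivial or all of $\bbZ/p\bbZ$, and primitivity of $\iota$ is precisely the statement that $H$ is trivial. It therefore suffices to rule out the case $H = \bbZ/p\bbZ$.

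By Proposition~\ref{Saturation}, having $H = \bbZ/p\bbZ$ would force the discriminant quadratic form $q_{A_{p-1}}$ (respectively, the bilinear form $b_{A_{p-1}}$ in the non-even overlattice case) to vanish identically on $\bbZ/p\bbZ$. I would then invoke the standard explicit computation of the discriminant form of $A_n$: on a generator $g$ of $A_{p-1}^*/A_{p-1}$, arising for instance from the fundamental weight $\omega_1 = \tfrac{1}{p}\bigl((p-1)e_1 + (p-2)e_2 + \cdots + e_{p-1}\bigr)$, one obtains $b_{A_{p-1}}(g,g) \equiv -\tfrac{p-1}{p} \equiv \tfrac{1}{p} \pmod{\bbZ}$, which is nonzero for every prime $p$.

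This already rules out any nontrivial isotropic subgroup: a general element of $\bbZ/p\bbZ$ has the form $kg$ with $1 \leq k \leq p-1$, so the form evaluates on it to $\tfrac{k^2}{p} \pmod{\bbZ}$, which is zero only when $p \mid k^2$; since $p$ is prime, this forces $p \mid k$, contradicting $1 \leq k \leq p-1$. Hence $H$ must be trivial, $L' = A_{p-1}$, and $\iota$ is primitive. No substantial obstacle is anticipated — the argument is essentially a one-line application of the discriminant/overlattice correspondence, with primality of $p$ leaving no room for a nontrivial subgroup of $\bbZ/p\bbZ$ to be isotropic for a form whose value on a generator is nonzero.
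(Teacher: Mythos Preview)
Your proof is correct and follows essentially the same approach as the paper: both invoke the overlattice/isotropic-subgroup correspondence (Proposition~\ref{Saturation}) together with the fact that $A_{A_{p-1}} \cong \bbZ/p\bbZ$ has no nontrivial proper subgroups. The paper's one-liner leaves implicit why $H=\bbZ/p\bbZ$ itself is excluded (this follows at once from $|H|^2 \mid |A_S|$, or equivalently from $S^*$ failing to be integral when $S$ is not unimodular), whereas you rule it out by an explicit computation of the discriminant form --- a difference only in level of detail, not in strategy.
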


\begin{proof}
	The discriminant $\bbZ/p\bbZ$ of $A_{p-1}$ does not have nontrivial subgroups.
\end{proof} 

\begin{proposition} [{\cite{Nikulin}, Proposition 1.5.1}] \label{EmbeddingQ}
	A primitive embedding of an even lattice $S$ into an even lattice $L$ with discriminant form $q$ such that $S^{\perp}\subset L$ is isomorphic to $K$ is determined by a pair $(H,\gamma)$, where $H\subset A_S$ is a subgroup and $\gamma:H\hookrightarrow A_K$ is a group monomorphism, while $q_K\circ\gamma=-q_S|_H$ and
	$$
	(q_S\oplus q_K|_{\Gamma_\gamma^\perp})/\Gamma_\gamma\simeq q,
	$$
	where $\Gamma_\gamma$ is the graph of $\gamma$ in $A_S\oplus A_K$.
\end{proposition}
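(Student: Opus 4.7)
The plan is to reduce to Proposition \ref{Saturation} applied to the finite-index sublattice $S\oplus K\subset L$. Given a primitive embedding $\iota:S\hookrightarrow L$ with orthogonal complement isomorphic to $K$, we have a chain
$$
S\oplus K\hookrightarrow L\hookrightarrow L^*\hookrightarrow (S\oplus K)^* = S^*\oplus K^*,
$$
and $[L:S\oplus K]$ is finite because $S\otimes\mathbb{Q}$ and $K\otimes\mathbb{Q}$ together span $L\otimes\mathbb{Q}$. So $L$ is an even overlattice of $S\oplus K$, and Proposition \ref{Saturation} classifies such overlattices by isotropic subgroups $H_L\subset A_{S\oplus K}=A_S\oplus A_K$ (isotropic with respect to $q_S\oplus q_K$).

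First I would show that primitivity of $S$ (resp.\ $K$) in $L$ is equivalent to $H_L\cap A_S=0$ (resp.\ $H_L\cap A_K=0$). Indeed, if some nonzero $(\bar s,0)\in H_L\cap A_S$, then $L$ contains a representative $s^*\in S^*\setminus S$, whence $s^*\in L\cap(S\otimes\mathbb{Q})$, contradicting that $S$ is saturated in $L$; the converse is the same argument reversed. Consequently, projecting $H_L$ onto its first factor gives a subgroup $H\subset A_S$ on which the second projection defines an injective map $\gamma:H\hookrightarrow A_K$, and $H_L=\Gamma_\gamma$ is the graph of $\gamma$.

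Next I would translate the isotropy condition. For $(h,\gamma(h))\in\Gamma_\gamma$,
$$
(q_S\oplus q_K)(h,\gamma(h)) = q_S(h)+q_K(\gamma(h))\equiv 0\pmod{2\mathbb{Z}},
$$
which is exactly $q_K\circ\gamma=-q_S|_H$. The formula for the discriminant form of $L$ then follows directly from the last sentence of Proposition \ref{Saturation}: $q_L=(q_{S\oplus K}|_{H_L^\perp})/H_L$, and substituting $H_L=\Gamma_\gamma$ yields $q\simeq(q_S\oplus q_K|_{\Gamma_\gamma^\perp})/\Gamma_\gamma$.

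For the converse, given any pair $(H,\gamma)$ satisfying $q_K\circ\gamma=-q_S|_H$, the graph $\Gamma_\gamma\subset A_S\oplus A_K$ is isotropic, so Proposition \ref{Saturation} produces an even overlattice $L$ of $S\oplus K$. The two intersection conditions $\Gamma_\gamma\cap A_S=\Gamma_\gamma\cap A_K=0$ hold automatically for a graph, so by the equivalence established above, $S$ and $K$ embed primitively in $L$ with $S^\perp=K$. The main subtle point — and the step I would be most careful about — is the equivalence between primitivity of $S$ in $L$ and the intersection condition $H_L\cap A_S=0$, together with verifying that the construction is well-defined up to the natural equivalence of primitive embeddings (i.e.\ does not depend on the choice of isomorphism $S^\perp\cong K$); the rest is a straightforward unwinding of Proposition \ref{Saturation}.
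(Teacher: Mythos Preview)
The paper does not supply a proof of this proposition; it is quoted directly from Nikulin (\cite{Nikulin}, Proposition~1.5.1) without argument. Your proposal is correct and is in fact essentially Nikulin's own proof: one recognizes $S\oplus K$ as a finite-index sublattice of $L$, applies the overlattice classification (Proposition~\ref{Saturation}) to obtain an isotropic subgroup $H_L\subset A_S\oplus A_K$, and then observes that primitivity of $S$ and of $K=S^\perp$ in $L$ is exactly the pair of conditions $H_L\cap A_S=0$ and $H_L\cap A_K=0$, which together force $H_L$ to be the graph of a monomorphism $\gamma:H\hookrightarrow A_K$. The remaining identifications (isotropy $\Leftrightarrow q_K\circ\gamma=-q_S|_H$, and the formula for $q_L$) are read off from Proposition~\ref{Saturation} just as you say. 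There is nothing to compare against in the present paper.
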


\begin{corollary} \label{EmbeddingDiscr}
	Under the assumptions of Proposition \ref{EmbeddingQ}, we have $A_L=\Gamma_\gamma^\perp/\Gamma_\gamma\subset A_S\oplus A_K/\Gamma_\gamma$.
\end{corollary}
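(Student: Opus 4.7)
The plan is to deduce the corollary as a direct consequence of Proposition \ref{Saturation} (Nikulin's overlattice theorem) applied to the finite-index inclusion $S\oplus K\hookrightarrow L$, combined with the explicit description of $L$ furnished by Proposition \ref{EmbeddingQ}.

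First I would note that because $S\hookrightarrow L$ is primitive with orthogonal complement $K$, the lattice $S\oplus K$ sits inside $L$ with finite index: indeed $\operatorname{rank}(S)+\operatorname{rank}(K)=\operatorname{rank}(L)$ since $K=S^\perp$ and the pairing on $L$ is non-degenerate. Thus $S\oplus K\subset L$ is an (even) overlattice, and Proposition \ref{Saturation} applies with the role of $S$ played by $S\oplus K$. It identifies $L$ with a unique isotropic subgroup $H_L\subset A_{S\oplus K}=A_S\oplus A_K$ and, crucially, gives $H_L^\perp=L^*/(S\oplus K)$ inside $A_S\oplus A_K$.

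Next I would identify $H_L$ explicitly with $\Gamma_\gamma$. This is essentially the content of the construction in Proposition \ref{EmbeddingQ}: the primitive embedding with specified orthogonal complement $K$ is produced precisely by taking the preimage in $(S\oplus K)^*$ of the graph $\Gamma_\gamma\subset A_S\oplus A_K$, and the isotropy condition $q_K\circ\gamma=-q_S|_H$ is exactly the statement that $\Gamma_\gamma$ is isotropic for $q_S\oplus q_K$. Thus $L/(S\oplus K)=\Gamma_\gamma$ and consequently $L^*/(S\oplus K)=\Gamma_\gamma^\perp$ (the orthogonal taken inside $A_S\oplus A_K$ with respect to $b_S\oplus b_K$).

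Finally I would apply the third isomorphism theorem to the chain $S\oplus K\subset L\subset L^*$:
\[
A_L=L^*/L\;\cong\;\bigl(L^*/(S\oplus K)\bigr)\big/\bigl(L/(S\oplus K)\bigr)\;=\;\Gamma_\gamma^\perp/\Gamma_\gamma,
\]
which is visibly a subgroup of $(A_S\oplus A_K)/\Gamma_\gamma$, as claimed. The only subtle point — really the one step where one has to be careful — is the identification $L^*/(S\oplus K)=\Gamma_\gamma^\perp$; this requires unpacking the duality between $L/(S\oplus K)$ and $L^*/(S\oplus K)$ under the discriminant form, and is precisely the second half of Proposition \ref{Saturation}. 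Beyond that, no further calculation is needed, so I do not anticipate a serious obstacle.
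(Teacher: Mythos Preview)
Your argument is correct. The paper gives no separate proof for this corollary: it is read off directly from the displayed isomorphism $(q_S\oplus q_K|_{\Gamma_\gamma^\perp})/\Gamma_\gamma\simeq q$ in Proposition~\ref{EmbeddingQ}, which already identifies $A_L$ (the group underlying $q$) with $\Gamma_\gamma^\perp/\Gamma_\gamma$; your route via Proposition~\ref{Saturation} simply unpacks why that formula holds, so the approaches coincide.
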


\clearpage

\section{The list of combinations of singularities on cubic threefolds} \label{Tables}

\begin{table}[htb]
	\begin{center}
		\begin{tabular*}{5.90in}{@{\extracolsep{\fill}}|c | c | c| p{0.95in}|| c| c| c| 
				p{0.95in}|| c| c| c| p{0.95in}|}\hline
			T &$\mu$&k&type&   T &$\mu$&k&type    &        T  &$\mu$&k&type \\\hline\hline
			$1$&$16$&$1$&$O_{16}      $&$ 7$&$9$ &$1$&$T_{334}    $&$13$&$11$&$1$&$T_{246}    $\\ \hline
			$2$&$12$&$1$&$U_{12} $&$ 8$&$ 8$&$1$&$P_{8}      $&$14$&$11$&$2$&$T_{245}+A_1$\\ \hline
			$3$&$11$&$1$&$S_{11} $&$ 9$&$13$&$1$&$T_{266}    $&$15$&$11$&$3$&$X_{9}+2A_1 $\\ \hline
			$4$&$11$&$1$&$T_{444}$&$10$&$12$&$1$&$T_{256}    $&$16$&$10$&$1$&$T_{245}    $\\ \hline 
			$5$&$10$&$1$&$Q_{10} $&$11$&$12$&$2$&$T_{246}+A_1$&$17$&$10$&$2$&$X_{9}+A_1  $\\ \hline
			$6$&$10$&$1$&$T_{344}$&$12$&$11$&$1$&$T_{255}    $&$18$&$9 $&$2$&$X_9        $\\\hline
		\end{tabular*}
		
		\bigskip
		
		\caption{Threefolds with a corank $\geq3$ singularity or a corank 2, $j_3=0$ singularity}\label{Ta:ca3list}
	\end{center}
\end{table}

\begin{table}[htb]
	\begin{center}
		\begin{tabular*}{5.90in}{@{\extracolsep{\fill}}|c | c | c| p{0.95in}|| c| c| c| 
				p{0.95in}|| c| c| c| p{0.95in}|}\hline
			$T$ &$\mu$&$k$&type        &$T$ &$\mu$&$k$&type        &$T$&$\mu$&$k$&type  \\\hline\hline
			$19$&$12$&$2$&$J_{10}+A_2 $&$45$&$ 7$&$1$&$D_7         $&$71$&$12$&$3$&$3D_4        $\\ \hline
			$20$&$11$&$2$&$J_{10}+A_1 $&$46$&$11$&$4$&$D_6+A_3+2A_1$&$72$&$11$&$3$&$2D_4+A_3    $\\ \hline
			$21$&$10$&$1$&$J_{10}     $&$47$&$10$&$3$&$D_6+A_3+A_1 $&$73$&$10$&$3$&$2D_4+A_2    $\\ \hline
			$22$&$10$&$2$&$E_{8} +A_2 $&$48$&$ 9$&$2$&$D_6+A_3     $&$74$&$11$&$5$&$2D_4+3A_1   $\\ \hline
			$23$&$ 9$&$2$&$E_{8} +A_1 $&$49$&$10$&$4$&$D_6+A_2+2A_1$&$75$&$10$&$4$&$2D_4+2A_1   $\\ \hline
			$24$&$ 8$&$1$&$E_{8}      $&$50$&$ 9$&$3$&$D_6+A_2+A_1 $&$76$&$ 9$&$3$&$2D_4+A_1    $\\ \hline
			$25$&$10$&$3$&$E_7+A_2+A_1$&$51$&$ 8$&$2$&$D_6+A_2     $&$77$&$ 8$&$2$&$2D_4        $\\ \hline
			$26$&$ 9$&$2$&$E_7+A_2    $&$52$&$10$&$5$&$D_6+4A_1    $&$78$&$10$&$3$&$D_4+2A_3    $\\ \hline
			$27$&$ 9$&$3$&$E_7+2A_1   $&$53$&$ 9$&$4$&$D_6+3A_1    $&$79$&$ 9$&$3$&$D_4+A_3+A_2 $\\ \hline
			$28$&$ 8$&$2$&$E_7+A_1    $&$54$&$ 8$&$3$&$D_6+2A_1    $&$80$&$10$&$5$&$D_4+A_3+3A_1$\\ \hline
			$29$&$ 7$&$1$&$E_7        $&$55$&$ 7$&$2$&$D_6+A_1     $&$81$&$ 9$&$4$&$D_4+A_3+2A_1$\\ \hline
			$30$&$10$&$3$&$E_6+2A_2   $&$56$&$ 6$&$1$&$D_6         $&$82$&$ 8$&$3$&$D_4+A_3+A_1 $\\ \hline
			$31$&$ 9$&$3$&$E_6+A_2+A_1$&$57$&$11$&$3$&$D_5+2A_3    $&$83$&$ 7$&$2$&$D_4+A_3     $\\ \hline
			$32$&$ 8$&$2$&$E_6+A_2    $&$58$&$10$&$3$&$D_5+A_3+A_2 $&$84$&$ 8$&$3$&$D_4+2A_2    $\\ \hline
			$33$&$ 8$&$3$&$E_6+2A_1   $&$59$&$10$&$4$&$D_5+A_3+2A_1$&$85$&$ 9$&$5$&$D_4+A_2+3A_1$\\ \hline
			$34$&$ 7$&$2$&$E_6+A_1    $&$60$&$ 9$&$3$&$D_5+A_3+A_1 $&$86$&$ 8$&$4$&$D_4+A_2+2A_1$\\ \hline
			$35$&$ 6$&$1$&$E_6        $&$61$&$ 8$&$2$&$D_5+A_3     $&$87$&$ 7$&$3$&$D_4+A_2+A_1 $\\ \hline
			$36$&$11$&$2$&$D_8+A_3    $&$62$&$ 9$&$3$&$D_5+2A_2    $&$88$&$ 6$&$2$&$D_4+A_2     $\\ \hline
			$37$&$10$&$2$&$D_8+A_2    $&$63$&$ 9$&$4$&$D_5+A_2+2A_1$&$89$&$10$&$7$&$D_4+6A_1    $\\ \hline
			$38$&$10$&$3$&$D_8+2A_1   $&$64$&$ 8$&$3$&$D_5+A_2+A_1 $&$90$&$ 9$&$6$&$D_4+5A_1    $\\ \hline
			$39$&$ 9$&$2$&$D_8+A_1    $&$65$&$ 7$&$2$&$D_5+A_2     $&$91$&$ 8$&$5$&$D_4+4A_1    $\\ \hline
			$40$&$ 8$&$2$&$D_8        $&$66$&$ 9$&$5$&$D_5+4A_1    $&$92$&$ 7$&$4$&$D_4+3A_1    $\\ \hline
			$41$&$10$&$2$&$D_7+A_3    $&$67$&$ 8$&$4$&$D_5+3A_1    $&$93$&$ 6$&$3$&$D_4+2A_1    $\\ \hline
			$42$&$ 9$&$2$&$D_7+A_2    $&$68$&$ 7$&$3$&$D_5+2A_1    $&$94$&$ 5$&$2$&$D_4+A_1     $\\ \hline
			$43$&$ 9$&$3$&$D_7+2A_1   $&$69$&$ 6$&$2$&$D_5+A_1     $&$95$&$ 4$&$1$&$D_4         $\\ \hline
			$44$&$ 8$&$2$&$D_7+A_1    $&$70$&$ 5$&$1$&$D_5         $&\multicolumn{4}{c}{}\\ \cline{1-8}
		\end{tabular*}
		
		\bigskip
		
		\caption{Threefolds with a corank 2, $j_3\neq 0$ singularity} \label{Ta:ca2list}
	\end{center}
\end{table}

\begin{table}[htb]
	\begin{center}
		\begin{tabular*}{5.90in}{@{\extracolsep{\fill}}|c | c | c| p{0.95in}|| c| c| c| p{0.95in}|| c| c| c| p{0.95in}|}\hline
			$T$ &$\mu$&$k$&type          &$T$ &$\mu$&$k$&type           &$T$&$\mu$&$k$&type            \\\hline\hline
			$ 96$&$11$&$1$&$A_{11}      $&$133$&$ 5$&$1$&$A_5          $&$170$&$ 6$&$2$&$2A_3         $\\ \hline
			$ 97$&$10$&$1$&$A_{10}      $&$134$&$10$&$3$&$2A_4+A_2     $&$171$&$ 6$&$3$&$A_3+A_2+A_1  $\\ \hline
			$ 98$&$10$&$2$&$A_9+A_1     $&$135$&$10$&$3$&$A_4+2A_3     $&$172$&$ 6$&$4$&$A_3+3A_1     $\\ \hline
			$ 99$&$ 9$&$1$&$A_9         $&$136$&$ 9$&$3$&$2A_4+A_1     $&$173$&$ 5$&$2$&$A_3+A_2      $\\ \hline
			$100$&$10$&$2$&$A_{8} +A_2  $&$137$&$ 9$&$3$&$A_4+A_3+A_2  $&$174$&$ 5$&$3$&$A_3+2A_1     $\\ \hline
			$101$&$ 9$&$2$&$A_{8}+A_1   $&$138$&$ 9$&$5$&$A_4+2A_2+A_1 $&$175$&$ 4$&$2$&$A_3+A_1      $\\ \hline
			$102$&$ 8$&$1$&$A_8         $&$139$&$ 9$&$5$&$A_4+A_3+2A_1 $&$176$&$ 3$&$1$&$A_3          $\\ \hline
			$103$&$11$&$2$&$A_7+A_4     $&$140$&$ 8$&$2$&$2A_4         $&$177$&$10$&$5$&$5A_2         $\\ \hline
			$104$&$10$&$2$&$A_7+A_3     $&$141$&$ 8$&$3$&$A_4+A_3+A_1  $&$178$&$ 9$&$5$&$4A_2+A_1     $\\ \hline
			$105$&$10$&$3$&$A_7+A_2+A_1 $&$142$&$ 8$&$3$&$A_4+2A_2     $&$179$&$ 8$&$4$&$4A_2         $\\ \hline
			$106$&$ 9$&$2$&$A_7+A_2     $&$143$&$ 8$&$4$&$A_4+A_2+2A_1 $&$180$&$ 8$&$5$&$3A_2+2A_1    $\\ \hline
			$107$&$ 9$&$3$&$A_7+2A_1    $&$144$&$ 8$&$5$&$A_4+4A_1     $&$181$&$ 8$&$6$&$2A_2+4A_1    $\\ \hline
			$108$&$ 8$&$2$&$A_7+A_1     $&$145$&$ 7$&$2$&$A_4+A_3      $&$182$&$ 8$&$7$&$A_2+6A_1     $\\ \hline
			$109$&$ 7$&$1$&$A_7         $&$146$&$ 7$&$3$&$A_4+ A_2+A_1 $&$183$&$ 7$&$4$&$3A_2+A_1     $\\ \hline
			$110$&$10$&$2$&$A_6+A_4     $&$147$&$ 7$&$4$&$A_4+3A_1     $&$184$&$ 7$&$5$&$2A_2+3A_1    $\\ \hline
			$111$&$ 9$&$2$&$A_6+A_3     $&$148$&$ 6$&$2$&$A_4+A_2      $&$185$&$ 7$&$6$&$A_2+5A_1     $\\ \hline
			$112$&$ 9$&$3$&$A_6+A_2+A_1 $&$149$&$ 6$&$3$&$A_4+2A_1     $&$186$&$ 6$&$3$&$3A_2         $\\ \hline
			$113$&$ 8$&$2$&$A_6+A_2     $&$150$&$ 5$&$2$&$A_4+A_1      $&$187$&$ 6$&$2$&$2A_2+2A_1    $\\ \hline
			$114$&$ 8$&$3$&$A_6+2A_1    $&$151$&$ 4$&$1$&$A_4          $&$188$&$ 6$&$5$&$A_2+4A_1     $\\ \hline
			$115$&$ 7$&$2$&$A_6+A_1     $&$152$&$11$&$4$&$3A_3+A_1     $&$189$&$ 5$&$3$&$2A_2+A_1     $\\ \hline
			$116$&$ 6$&$1$&$A_6         $&$153$&$10$&$5$&$2A_3+A_2+2A_1$&$190$&$ 5$&$4$&$A_2+3A_1     $\\ \hline
			$117$&$11$&$3$&$2A_5+A_1    $&$154$&$10$&$6$&$2A_3+4A_1    $&$191$&$ 4$&$2$&$2A_2         $\\ \hline
			$118$&$10$&$2$&$2A_5        $&$155$&$ 9$&$3$&$3A_3         $&$192$&$ 4$&$3$&$A_2+2A_1     $\\ \hline
			$119$&$10$&$3$&$A_5+A_4+ A_1$&$156$&$ 9$&$4$&$2A_3+A_2+A_1 $&$193$&$ 3$&$2$&$A_2+A_1      $\\ \hline
			$120$&$10$&$4$&$A_5+A_3+2A_1$&$157$&$ 9$&$5$&$A_3+2A_2+2A_1$&$193$&$ 3$&$2$&$A_2+A_1      $\\ \hline
			$121$&$10$&$4$&$A_5+2A_2+A_1$&$158$&$ 9$&$5$&$2A_3+3A_1    $&$194$&$ 2$&$1$&$A_2          $\\ \hline
			$122$&$ 9$&$2$&$A_5+A_4     $&$159$&$ 9$&$6$&$A_3+A_2+4A_1 $&$195$&$10$&$10$&$10A_1       $\\ \hline
			$123$&$ 9$&$3$&$A_5+A_3+A_1 $&$160$&$ 9$&$7$&$A_3+6A_1     $&$196$&$ 9$&$ 9$&$9A_1        $\\ \hline
			$124$&$ 9$&$3$&$A_5+2A_2    $&$161$&$ 8$&$3$&$2A_3+A_2     $&$197$&$ 8$&$ 8$&$8A_1        $\\ \hline
			$125$&$ 9$&$4$&$A_5+A_2+2A_1$&$162$&$ 8$&$4$&$A_3+2A_2+A_1 $&$198$&$ 7$&$ 7$&$7A_1        $\\ \hline
			$126$&$ 9$&$5$&$A_5+4A_1    $&$163$&$ 8$&$4$&$2A_3+2A_1    $&$199$&$ 6$&$ 6$&$6A_1        $\\ \hline
			$127$&$ 8$&$2$&$A_5+ A_3    $&$164$&$ 8$&$5$&$A_3+A_2+3A_1 $&$200$&$ 5$&$ 5$&$5A_1        $\\ \hline
			$128$&$ 8$&$3$&$A_5+A_2+A_1 $&$165$&$ 8$&$6$&$A_3+5A_1     $&$201$&$ 4$&$ 4$&$4A_1        $\\ \hline
			$129$&$ 8$&$4$&$A_5+3A_1    $&$166$&$ 7$&$3$&$2A_3+A_1     $&$202$&$ 3$&$ 3$&$3A_1        $\\ \hline
			$130$&$ 7$&$2$&$A_5+A_2     $&$167$&$ 7$&$3$&$A_3+2A_2     $&$203$&$ 2$&$ 2$&$2A_1        $\\ \hline
			$131$&$ 7$&$3$&$A_5+2A_1    $&$168$&$ 7$&$4$&$A_3+A_2+2A_1 $&$204$&$ 1$&$ 1$&$A_1         $\\ \hline
			$132$&$ 6$&$2$&$A_5+A_1     $&$169$&$ 7$&$5$&$A_3+4A_1     $&\multicolumn{4}{c}{}          \\ 
			\cline{1-8}
		\end{tabular*}
		
		\bigskip
		
		\caption{Threefolds with constellations of $A_n$ singularities}\label{Ta:anlist}
	\end{center}
\end{table}

\clearpage

\bibliography{3folds.bib}

\end{document}